\renewcommand{\div}{\rm div}
\numberwithin{equation}{section}
\newtheorem{theorem}{Theorem}[section]
\newtheorem*{theorem*}{Theorem}
\newtheorem{definition}[theorem]{Definition}
\newtheorem{remark}[theorem]{Remark}
\newtheorem{lemma}[theorem]{Lemma}
\newtheorem{proposition}[theorem]{Proposition}
\newtheorem{corollary}[theorem]{Corollary}
\def\O{\Omega}
\def\P{\mathcal{P}}
\def\dist{{\rm dist}} 
\def\diam{{\rm diam}}
\def\div{{\rm div}\,}
\def\supp{{\rm supp}}
\def\dx{{\,\rm d}x}
\def\dy{{\,\rm d}y}
\def\dz{{\,\rm d}z}
\newcommand{\R}{{\mathbb R}}
\newcommand{\N}{{\mathbb N}}
\DeclareMathOperator*{\esssup}{ess\,sup}
\DeclareMathOperator*{\essinf}{ess\,inf}
\newcommand{\pp}{{p(\cdot)}}
\newcommand{\cpp}{{p'(\cdot)}}
\newcommand{\Lp}{L^{p(\cdot)}}
\newcommand{\Lps}{L^{p^*(\cdot)}}
\newcommand{\Pp}{\mathcal P}
\newcommand{\qq}{{q(\cdot)}}
\newcommand{\cqq}{{q'(\cdot)}}
\newcommand{\ps}{{p^*(\cdot)}}
\newcommand{\grad}{\nabla}
\newcommand{\tauLH}{\partial LH_0^\tau}
\title[Poincar\'e and Sobolev inequalities and log-Holder continuity at the boundary]{Poincar\'e and Sobolev inequalities with variable exponents and  log-Holder continuity only at the boundary}
\date{\today}
\author[D. Cruz-Uribe]{David Cruz-Uribe OFS}
\address{Department of Mathematics\\ University of Alabama\\ Tuscaloosa, AL 35487, US} 
\email{dcruzuribe@ua.edu}
\author[F. L\'opez-Garc\'\i a]{Fernando L\'opez-Garc\'\i a}
\address{Department of Mathematics and Statistics\\ California State Polytechnic University Pomona\\ Pomona, CA 91768, US}
\email{fal@cpp.edu}
\author[I. Ojea]{Ignacio Ojea}
\address{Departamento de Matemática \\ Facultad de Ciencias Exactas y Naturales\\ Universidad de Buenos Aires \\ 
IMAS - CONICET} 
\email{iojea@dm.uba.ar}
\begin{document}

\begin{abstract}
We prove Sobolev-Poincar\'e and Poincar\'e inequalities in variable Lebesgue spaces $\Lp(\Omega)$, with $\Omega\subset\R^n$ a bounded John domain, with weaker regularity assumptions on the exponent $\pp$ that have been used previously. In particular, we require $\pp$ to satisfy a new \emph{boundary $\log$-H\"older condition} that imposes some logarithmic decay on the oscillation of $\pp$ towards the boundary of the domain. Some control over the interior oscillation of $\pp$ is also needed, but it is given by a very general condition that allows $\pp$ to be discontinuous at every point of $\Omega$. Our results follows from a local-to-global argument based on the continuity of certain Hardy type operators. We provide examples that show that our boundary $\log$-H\"older condition is essentially necessary for our main results. The same examples are adapted to show that this condition is not sufficient for other related inequalities. Finally, we give an application to a Neumann problem for a degenerate $\pp$-Laplacian.   
\end{abstract}

\keywords{Poincar\'e inequalities, Sobolev-Poincar\'e inequalities, Sobolev inequalities, variable Lebesgue spaces, log-H\"older continuity}

\subjclass[2020]{Primary: 46E35; Secondary: 26D10, 42B35}

\thanks{The first author is partially supported by a Simons Foundation
  Travel Support for Mathematicians Grant and by NSF Grant DMS-2349550.  
  The third author is partially supported by ANPCyT under grand PICT 2018-3017, by CONICET under grant PIP112201130100184CO and by Universidad de Buenos Aires under grant 20020170100056BA.}

\maketitle

\section{Introduction}
In this paper we prove Poincar\'e and Sobolev inequalities in the variable Lebesgue spaces $\Lp(\Omega)$, with weaker regularity assumptions on the exponent function $\pp$ than have been used previously.  To put our results in context, we briefly survey some earlier results in the classical Lebesgue spaces. For a more detailed history of these results, with proofs, see~\cite{Hajlasz2001}.
A Sobolev-Poincaré inequality is an inequality of the form
\begin{equation} \label{eqn:old-ps-inequality}
\|f-f_\O\|_{L^q(\O)} \le C\|\nabla f\|_{L^p(\O)},
\end{equation}
where $\O\subset \R^n$ is a domain, $f$ is locally Lipschitz on $\O$, and $f_\O=\frac{1}{|\O|}\int_\O f\,dx$.  Some assumptions need to be made on the domain $\O$, and on the exponents $p$ and $q$, for this inequality to hold. For $1\le p<n$, define $    p^*=\frac{np}{n-p}$. If $\Omega$ is a Lipschitz domain and $1<p<n$, \eqref{eqn:old-ps-inequality} was proved for $q=p^*$ by Sobolev~\cite{Sobolev1,Sobolev2}, and extended to the case $p=1$ and $q=1^*$ by Gagliardo~\cite{Gagliardo} and Nirenberg~\cite{Nirenberg}. The inequality for $q<p^*$ follows from the critical case $q=p^*$ by  H\"older's inequality. Inequality~\eqref{eqn:old-ps-inequality} was extended to bounded John domains by Martio \cite{Martio} when $1<p<n$ and by Bojarski \cite{MR0982072} when $p=1$. Moreover, Buckley and Koskela~\cite{BuckleyKoskela} showed that John domains are the largest class of bounded domains for which the Sobolev-Poincaré inequality holds with  $p<n$ and $q=p^*$. 
If $p<n$, then as $p\rightarrow n$, $p^*\rightarrow \infty$, but the constant on the right-hand side of~\eqref{eqn:old-ps-inequality} blows up. 
Inequality ~\eqref{eqn:old-ps-inequality} with $p=n$ does not hold for $q=\infty$, but for $p\ge n$ it holds  for every $1<q<\infty$. This case follows from the Sobolev embedding theorem; see~\cite[Corollary~4.2.3]{MR1014685}.  

Inequality~\eqref{eqn:old-ps-inequality} is a particular case of a  \emph{weighted} inequality of the form
\begin{equation}\label{imp-poincare}
\|f - f_\Omega\|_{L^q(\O)} \le C\|d^{1-\alpha} \nabla f\|_{L^p(\O)},
\end{equation}
where $d(x) = $dist$(x,\partial\O)$, $\alpha\in[0,1]$, $p\alpha<n$, and $1<p\le q\le \frac{np}{n-p\alpha}$. Inequality~\eqref{imp-poincare} was first proved on bounded John domains by Hurri-Syrj\"anen~\cite{H2}, and later extended with additional weights by Drelichman and Dur\'an~\cite{DD1}. An important special case of this inequality is when $q=p$ and $\alpha=1$; this case is usually referred to as an~\emph{improved Poincaré inequality}.

A Sobolev inequality is an inequality of the form
\[ \|f\|_{L^q(\O)} \le C\|\nabla f\|_{L^p(\O)},\]
where $f$ is assumed to be a Lipschitz function with compact support contained in $\Omega$.  By H\"older's inequality and the triangle inequality, the Poincar\'e inequality~\eqref{eqn:old-ps-inequality} implies the Sobolev inequality.  However, one key difference is that since this inequality is for functions of compact support, their behavior near the boundary, and so the geometry of the set $\Omega$, does not play an important role.  

\medskip

Sobolev and Sobolev-Poincaré inequalities have also been proved on the variable Lebesgue spaces. Intuitively, given an exponent function $\pp : \Omega \rightarrow [1,\infty)$, the space $\Lp(\Omega)$ consists of all measurable functions $f$ such that
\[ \int_\Omega |f(x)|^{p(x)}\,dx < \infty.  \]
For a precise definition and further details, see Section~\ref{section:prelim} below and~\cite{CruzUribeFiorenza}.  
Harjulehto and H\"ast\"o proved that if $\Omega$ is a John domain, and the exponent function $\pp$ is such that either $p_-(\Omega)<n$ and $p_+(\Omega) \leq p_-(\Omega)^*$, or if $p_-(\Omega)\geq n$ and $p_+(\Omega)<\infty$, then
\begin{equation} \label{eqn:p-p-Poincare}
\|f-f_\Omega\|_{L^\pp(\Omega)} \leq C\|\grad f\|_{L^\pp(\Omega)}.  
\end{equation}
As a consequence of this result they showed that if $\Omega$ is bounded, convex domain and $\pp$ is uniformly continuous on $\Omega$, then~\eqref{eqn:p-p-Poincare} holds.  

In \cite[Lemma 8.3.3]{DieningBook} Diening, {\em et al.} proved that given a John domain $\Omega$, if  the exponent $\pp$ satisfies $1<p_-(\Omega) \le p_+(\Omega) <n$ and is log-H\"older continuous, that is,
\[ |p(x)-p(y)| \leq \frac{C_0}{-\log(|x-y|)}, \qquad  x,\,y \in \Omega, \; |x-y|<\tfrac{1}{2}, \]
and if $\qq = p^*(\cdot)$, then for all locally Lipschitz functions $f$,
\begin{equation} \label{eqn:var-poincare}
 \|f-f_\Omega\|_{L^\qq(\Omega)} \leq C\|\grad f\|_{L^\pp(\Omega)}.  
 \end{equation}
This result has been generalized to H\"ormander vector fields by Li, Lu, and Tang~\cite{MR3360773}.

Sobolev inequalities have also been considered by a number of authors.  Kov\'a\v cik and R\'akosn\'ik~\cite{MR1134951} proved that
\begin{equation} \label{eqn:var-sobolev-old}
\|f\|_{L^\qq(\Omega)} \leq C\|\grad f\|_{L^\pp(\Omega)}
\end{equation}
where $\pp$ is uniformly continuous on $\overline{\Omega}$, $p_+<n$,  and $q(x)=p^*(x)-\varepsilon$ for some $\varepsilon>0$.  Edmunds and 
R\'akosn\'ik~\cite{MR1815935,MR1944549} proved that~\eqref{eqn:var-sobolev-old} holds with $\qq=\ps$, provided that $\Omega$ has Lipschitz boundary and $\pp$ is Lipschitz, later extending this to $\pp$ is H\"older continuous.
In \cite[Theorem 6.29]{CruzUribeFiorenza} the first author and Fiorenza proved that~\eqref{eqn:var-sobolev-old} holds with $\qq=\ps$ provided that $\pp$ is log-H\"older continuous on $\Omega$. (Though not stated there, their argument also shows that \eqref{eqn:old-ps-inequality} holds with $\qq=\ps$ if $\Omega$ is convex.)   A different proof was given by Diening, {\em et al.}~\cite[Theorem~8.3.1]{DieningBook} It is worth noting that in \cite{CruzUribeFiorenza} the hypotheses on $\pp$ are actually somewhat weaker than 
log-H\"older continuity:  they instead assume that the Hardy-Littlewood maximal operator is bounded on
$L^{(p^*(\cdot)/n')'}(\O)$. This hypothesis is a consequence of the proof, which uses the theory of Rubio de Francia extrapolation on variable Lebesgue spaces.    For a discussion of how this hypothesis differs from assuming log-H\"older continuity, see~\cite[Chapter~4]{CruzUribeFiorenza}.  A very different version of Sobolev's inequality was proved by Mercaldo, {\em et al.}~\cite[Proposition~2.4, Remark~1.2]{MR2906546}.  They showed that if $\pp$ took on two distinct values $1<p_1<p_2\leq 2$, and the sets where it took on these values have Lipschitz boundaries, then~\ref{eqn:var-sobolev-old} holds with $\qq=\pp$. 

Variable Lebesgue space versions of the weighted Poincar\'e inequality~\ref{imp-poincare} have not been explicitly proved in the literature.  However, arguing as in the proof of~\cite[Theorem~6.29]{CruzUribeFiorenza}, it is possible to prove the following result. Let $\Omega$ be a bounded John domain and fix $0\leq \alpha <1$.  If the exponent $\pp$ satisfies $1<p_-(\Omega) \le p_+(\Omega) <n/\alpha$ and is log-H\"older continuous, and $\qq \leq \frac{n\pp}{n-\pp\alpha}$, then
\begin{equation} \label{eqn:var-imp-poincare}
\|f - f_\Omega\|_{L^\qq(\Omega)} \le C\|d^{1-\alpha} \nabla f\|_{L^\pp(\O)}.
\end{equation}
The proof uses Rubio de Francia extrapolation, starting from the weighted versions of this inequality proved in~\cite[Theorems~3.3,~3.4]{DD1}.  Details are left to the reader.  We note that, instead of assuming log-H\"older continuity, it is also possible to state the hypotheses of this result in terms of the boundedness of the maximal operator.  

\bigskip

In all of the above results, it was necessary to assume some additional continuity or control on the oscillation of the exponent function $\pp$.  For the original Poincar\'e inequality proved by Harjulehto and H\"ast\"o they either required a strong restriction on the global oscillation of the exponent function, or uniform continuity.  Similarly, the Sobolev inequality of Mercaldo, {\em et al.} required a very particular form for the exponent function.  In the later results for Sobolev and Sobolev-Poincar\'e inequalities, the exponent was required to satisfy $p_+<n$ and to be log-H\"older continuous or the Hardy-Littlewood maximal operator is  bounded on a particular space that arises in the underlying extrapolation argument, a condition which, particularly in applications, is very close to log-H\"older continuity.  In~\cite[Problems~A.21, A.22]{CruzUribeFiorenza} the authors asked about weaker regularity assumptions for proving Poincar\'e and Sobolev inequalities.

In this paper we show that we can considerably weaken the previous regularity conditions and still prove the weighted Poincar\'e inequality~\eqref{eqn:var-imp-poincare}.    We will require three conditions.  First, we replace log-H\"older continuity with a condition that essentially implies that $\pp$ is log-H\"older continuous at the boundary.  Given  $x\in \Omega$ and $\tau\geq 1$ we define $d(x)=d(x,\partial\Omega)$ and $B_{x,\tau} = B(x,\tau d(x))$.

\begin{definition}\label{tauLHB}
Given $\Omega\subset\R^n$, let $\pp \in \Pp(\Omega)$ and $\tau\ge 1$. We say that $\pp$ satisfies the $\tauLH$ condition  if there is a constant $C_0$, that may depend on $\tau$, such that: 
\begin{equation}\label{Boundary LH constant}
p_+(B_{x,\tau}) - p_-(B_{x,\tau}) \le \frac{C_0}{-\log(\tau d(x))},
\end{equation}
for all $x\in\Omega$ with $\tau d(x)\leq 1/2$. In that case, we write $\pp\in \tauLH(\Omega)$.
\end{definition}

Below, we will show that with modest additional conditions on $\partial \Omega$, an exponent $\pp \in \tauLH(\Omega)$ can be extended to a function that is log-H\"older continuous on $\partial \Omega$.  See Section~\ref{section:lh-boundary}.

Second, we require a weak continuity property in the interior of $\Omega$.

\begin{definition}\label{def:eps-cont}
Given a domain $\O\subset\R^n$, $\varepsilon>0$,  and $f:\O\to\R$, we say that the function $f$ is $\varepsilon$-continuous at a point $x\in\O$ if there exists $\delta>0$ such that 
$|f(y)-f(x)|<\varepsilon$ for every $y \in B(x,\delta)$. 
If  $f$ is $\varepsilon$-continuous at every $x\in \O$, we say that $f$ is $\varepsilon$-continuous on  $\O$.
Finally, we say that $f$ is \emph{uniformly} $\varepsilon$-continuous if the same $\delta>0$ can be taken for every $x\in\Omega$.
\end{definition}

\begin{remark}
    The notion of $\varepsilon$-continuity given in Definition \ref{def:eps-cont} is well-known: it arises, for example, in the proof that a bounded function is Riemann integrable if and only if its set of discontinuities has Lebesgue measure $0$.  See Apostol~\cite[Section~9-21]{MR0087718}, or Convertito and the first author~\cite[Section~2.1]{MR4701107}.
\end{remark}
It is immediate that if $f$ is continuous on $\Omega$, then it is $\varepsilon$-continuous for every $\varepsilon>0$.  However, $\varepsilon$-continuous functions may be discontinuous:  for example, any step function on $\R$ whose discontinuities have a jump smaller than $\varepsilon$.  In particular, there exist $\varepsilon$-continuous exponent functions $\pp$ for which the maximal operator is unbounded on $\Lp(\Omega)$:  see~\cite[Example~3.21]{CruzUribeFiorenza}.  If $f$ is $\varepsilon$-continuous on $\overline{\Omega}$, then by a standard compactness argument, $f$ is uniformly $2\varepsilon$-continuous on $\Omega$.  We leave the details to the reader.

Finally, we need to restrict the relationship between $\pp$ and $\qq$ in a natural way.   We will assume that $\pp\in \Pp(\Omega)$ satisfies $1<p_-(\O)\le p_+(\O)<\infty$, and that $\qq\in \Pp(\Omega)$ is defined by
\begin{equation}\label{eq:def q-intro}
\frac{1}{\pp}-  \frac{\alpha}{n} = \frac{1}{\qq}
\end{equation}
where $\alpha$ satisfies
\begin{equation}\label{eq:alpha-intro}
\begin{cases} 0\le \alpha < 1 &\textrm{ if } p_+(\O)<n \\
0\le \alpha< \frac{n}{p_+(\O)} & \textrm{ if } p_+(\O)\geq n. 
\end{cases}
\end{equation}
Note that with this definition  our results include the case $p_+>n$.

\begin{theorem}\label{theorem:sp-general-intro}
Let $\O\subset \R^n$ be a bounded John domain.  Suppose  $\pp\in\Pp(\Omega)$ is such that $1<p_-(\O)\le p_+(\O)<\infty$ and $\pp\in\partial LH_0^{\tau_K}(\Omega)$, where the constant $\tau_K\geq 1$ depends on the John domain constants of $\Omega$.  Fix $\alpha$ as in~\eqref{eq:alpha-intro} and define $\qq$ by~\eqref{eq:def q-intro}.  Suppose also that $\frac{1}{\pp}$ is uniformly $\frac{\sigma}{n}$-continuous for some $\sigma<1-\alpha$.  Then there is a constant $C$ such that for every $f\in W^{1,p(\cdot)}(\Omega)$,
\begin{equation}\label{eq:sp-john-intro}
\|f(x)-f_\Omega\|_{L^\qq(\Omega)} \le C \|d^{1-\alpha}\nabla f(x)\|_{L^\pp(\Omega)},
\end{equation}
where $d(x)=\dist(x,\partial \Omega)$.
\end{theorem}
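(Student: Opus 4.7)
The plan is a local-to-global argument, as indicated in the abstract. First I would use the boundary log-H\"older condition $\tauLH$ to prove a local Sobolev-Poincar\'e inequality on each Whitney cube of $\Omega$; then I would use the John property to glue these local inequalities along Whitney chains, reducing the global estimate to the $\Lp\to\Lq$ boundedness of a discrete Hardy-type operator on the Whitney decomposition $\W$. The interior $\varepsilon$-continuity of $1/\pp$ enters only at the final step.

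For the local piece, fix a Whitney decomposition $\W$ of $\Omega$ and, for each $Q\in\W$, an enlargement $Q^*\subset B_{x_Q,\tau_K}$. By Definition~\ref{tauLHB} the oscillation of $\pp$ on $Q^*$ is bounded by $C_0/|\log(\ell(Q))|$ once $\ell(Q)$ is small, so $\ell(Q^*)^{p_-(Q^*)-p_+(Q^*)}\lesssim 1$ uniformly in $Q$. This is exactly the estimate needed to transfer the classical constant-exponent Sobolev-Poincar\'e inequality on $Q^*$ (with exponent $p_-(Q^*)$) to the variable-exponent form
\[\|f-f_Q\|_{\Lq(Q)}\le C\,\ell(Q)^{1-\alpha}\,\|\grad f\|_{\Lp(Q^*)},\]
with $C$ independent of $Q$; the weight $d^{1-\alpha}$ is essentially constant on $Q^*$ and factors out. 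The parameter $\tau_K$ is chosen so that the John property lets one build chains of Whitney cubes whose enlargements stay inside $\Omega$ and overlap consecutively. Fixing a distinguished central cube $Q_0$ and, for each $Q\in\W$, a John chain $Q=P_0,P_1,\ldots,P_{N(Q)}=Q_0$ of comparable-size overlapping Whitney cubes, a telescoping sum gives for $x\in Q$
\[|f(x)-f_{Q_0}| \le |f(x)-f_Q| + \sum_{j=1}^{N(Q)} |f_{P_{j-1}}-f_{P_j}|,\]
and reindexing the double sum by the link $P$ rather than by $Q$ controls $\|f-f_{Q_0}\|_{\Lq(\Omega)}$ by the $\Lq$-norm of
\[Tg(x) := \sum_{P\in\W}\chi_{S(P)}(x)\,\ell(P)^{1-\alpha-n}\!\int_{P^*} g(y)\,dy,\]
applied to $g=|\grad f|$, where $S(P)$ is the ``shadow'' of $P$ --- the union of Whitney cubes whose chain passes through $P$. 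Since $d(y)\simeq\ell(P)$ on $P^*$, this is the expected weighted right-hand side of~\eqref{eq:sp-john-intro}.

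The core of the argument is then the Hardy-type bound $\|Tg\|_{\Lq(\Omega)}\le C\|g\|_{\Lp(\Omega)}$. Both the John geometry (which forces geometric decay of the measures $|S(P)|$) and the interior $\varepsilon$-continuity of $1/\pp$ enter here. Given $\sigma<1-\alpha$, I take the $\delta>0$ from Definition~\ref{def:eps-cont} realizing uniform $(\sigma/n)$-continuity of $1/\pp$. On any ball of radius $\delta$ the Sobolev relation $1/\qq=1/\pp-\alpha/n$ holds with a margin of order $(1-\alpha-\sigma)/n$ to spare, and this margin absorbs the oscillation of $1/\pp$ in a local fractional-integration estimate carried out at essentially constant exponent; the finitely many Whitney cubes of size $\gtrsim\delta$ are handled directly, using $|\Omega|<\infty$ and $p_+(\Omega)<\infty$. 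To finish, I would replace $f_{Q_0}$ by $f_\Omega$ using $|f_\Omega-f_{Q_0}|\le|\Omega|^{-1}\int_\Omega|f-f_{Q_0}|\,dx$ together with H\"older's inequality on $\Lp(\Omega)$.

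The main obstacle is the Hardy bound just described. Because $\varepsilon$-continuity is genuinely weaker than continuity --- $\pp$ may be discontinuous on a dense subset of $\Omega$ --- neither pointwise maximal function estimates nor Rubio de Francia extrapolation (which would demand boundedness of $M$ on a suitable variable-exponent space) is available off the shelf, and the proof must proceed scale-by-scale. The strict gap $\sigma<1-\alpha$ is precisely what compensates for this weak interior hypothesis, at the cost of excluding the borderline case $\qq=\ps$.
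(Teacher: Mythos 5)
Your global strategy (telescoping along John chains and bounding a shadow-type Hardy operator $T$) is a legitimate ``primal'' alternative to the paper's route, which instead dualizes: it decomposes the dual function $g-g_\Omega$ into mean-zero pieces $g_t$ supported on the cubes of a tree-covering (Theorem~\ref{Decomp Thm}) and then combines the local inequalities with the off-diagonal H\"older-sum estimate and the boundedness of the operators $A_\Gamma$, $T_\pp$, $T^\alpha_\pp$. The difference of route is not the problem; the problem is that at both crucial points your proposal invokes the wrong hypothesis and leaves the actual work undone.

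First, you claim the local inequality $\|f-f_Q\|_{L^\qq(Q)}\lesssim \ell(Q)^{1-\alpha}\|\grad f\|_{L^\pp(Q^*)}$ on \emph{every} Whitney cube follows from $\tauLH$ alone, with the $\sigma/n$-continuity entering only later. For small Whitney cubes this is essentially right, because small Whitney cubes are automatically near $\partial\Omega$ and $\tauLH$ forces their oscillation below $(1-\alpha)/n$. But on the finitely many large interior cubes $\tauLH$ gives no control whatsoever: there $q_+(Q)$ may exceed $(p_-(Q))^*$, the constant-exponent Sobolev--Poincar\'e inequality with exponent $p_-(Q)$ simply does not reach $L^{q_+(Q)}$, and ``handled directly, using $|\Omega|<\infty$ and $p_+(\Omega)<\infty$'' produces nothing. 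This is precisely where the uniform $\frac{\sigma}{n}$-continuity is needed: one must partition such a cube into pieces of diameter $<\delta$ (so each piece satisfies $q_+\le p_-^*$), prove the inequality on each piece, and glue, controlling the transfer terms $|f_U-f_{G_i}|$ via an $L^1$-Poincar\'e inequality and an off-diagonal $K_0$-type estimate (this is the content of Lemmas~\ref{lemma:sp-cube-extension} and~\ref{lemma:Ut-sp}); one must also bound the blow-up of the constant $C(n)q_+(Q)$, which the continuity hypothesis yields through $q_+(U_t)\le n/(1-\sigma-\alpha)$. None of this is in your sketch.

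Second, the heart of your argument, $\|Tg\|_{L^\qq(\Omega)}\le C\|g\|_{L^\pp(\Omega)}$, is asserted with a justification (``the margin $\sigma<1-\alpha$ absorbs the oscillation in a local fractional-integration estimate at essentially constant exponent'') that rests on the $\varepsilon$-continuity. But $\varepsilon$-continuity cannot freeze the exponent on a shadow: the exponent may be discontinuous at every point, and no smallness of jumps lets you pass between $\Lp$- and $\Lq$-norms of averages over the sets $S(P)$ with uniform constants. What actually makes such shadow/Hardy operators bounded is the boundary condition: by Proposition~\ref{prop:tcov-John} each shadow satisfies $W_t\subset KQ_t\subset B_{x_t,\tau_K}$, so $\pp\in\partial LH_0^{\tau_K}(\Omega)$ gives $|W_t|^{-(p(y)-p_-(W_t))}\le C$, and from this one derives the boundedness of the averaging operators, the equivalence $\|\chi_{U_t}\|_\pp\approx |U_t|^{1/p_{U_t}}$, and the off-diagonal $K_0$ and H\"older-sum estimates that an $\Lp\to\Lq$ bound for your $T$ (or for the paper's $A_\Gamma$, $T^\alpha_\pp$) genuinely requires. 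As written, the key estimate of your proof is unproven, and the stated mechanism for it would not suffice.
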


As a consequence of Theorem~\ref{theorem:sp-general-intro} we get a Sobolev inequality with the same exponents but without assuming log-H\"older continuity at the boundary.

\begin{theorem} \label{thm:sobolev-intro}
Let $\O\subset \R^n$ be a bounded  domain.  Suppose  $\pp\in\Pp(\Omega)$ is such that $1<p_-(\O)\le p_+(\O)<\infty$.  Fix $\alpha$ as in~\eqref{eq:alpha-intro} and define $\qq$ by~\eqref{eq:def q-intro}.  Suppose also that $\frac{1}{\pp}$ is uniformly $\frac{\sigma}{n}$-continuous for some $\sigma<1-\alpha$.  Then there is a constant $C$ such that for every $f\in W_0^{1,p(\cdot)}(\Omega)$,
\begin{equation*} 
    \|f\|_{L^\qq(\Omega)} \le C \|\nabla f(x)\|_{L^\pp(\Omega)}.
\end{equation*}
\end{theorem}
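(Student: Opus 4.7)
My plan is to reduce Theorem~\ref{thm:sobolev-intro} to the Sobolev--Poincar\'e inequality of Theorem~\ref{theorem:sp-general-intro} applied on a large ball $B \supset \overline{\Omega}$, after extending both $f$ and $\pp$ to $B$. First, by the definition of $W_0^{1,\pp}(\Omega)$ as the closure of $C_c^\infty(\Omega)$, it suffices to prove the inequality for $f \in C_c^\infty(\Omega)$: applying the estimate to differences $f_k - f_j$ transfers Cauchy-ness from the gradient in $\Lp$ to the function in $\Lq$, and one passes to the limit in the standard way. So I fix $f \in C_c^\infty(\Omega)$ with $K := \supp f$ compact in $\Omega$, and pick open sets $U_1, U_2$ with $K \subset U_1 \subset \overline{U_1} \subset U_2 \subset \overline{U_2} \subset \Omega$, setting $\delta_1 := \dist(\overline{U_1}, \R^n \setminus U_2) > 0$.

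The main technical step will be constructing an extension $\tilde p$ of $\pp$ to a large ball $B := B(0, R)$ with $\overline\Omega \subset B(0, R/2)$ that satisfies all hypotheses of Theorem~\ref{theorem:sp-general-intro}. I will use a partition-of-unity interpolation: fix a constant $p_0 \in (p_-(\Omega), p_+(\Omega))$ and a smooth cutoff $\eta \colon \R^n \to [0,1]$ with $\eta \equiv 1$ on $U_1$, $\supp \eta \subset U_2$, and Lipschitz constant $L$. Define
\begin{equation*}
\frac{1}{\tilde p(x)} := \frac{\eta(x)}{\pp(x)} + \frac{1-\eta(x)}{p_0} \quad \text{on } U_2, \qquad \frac{1}{\tilde p(x)} := \frac{1}{p_0} \quad \text{on } B \setminus U_2.
\end{equation*}
Then $\tilde p \equiv \pp$ on $K$, $\tilde p \equiv p_0$ on a thick annulus near $\partial B$, and $p_-(\Omega) \le \tilde p \le p_+(\Omega)$. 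For $x, y \in B$ with $|x-y| < \delta_1$ (which rules out $x \in U_1$, $y \in B \setminus U_2$), a direct computation should give
\begin{equation*}
\Bigl| \tfrac{1}{\tilde p(x)} - \tfrac{1}{\tilde p(y)} \Bigr| \le \Bigl| \tfrac{1}{\pp(x)} - \tfrac{1}{\pp(y)} \Bigr| + L|x-y| \Bigl( \tfrac{1}{p_-(\Omega)} - \tfrac{1}{p_+(\Omega)} \Bigr),
\end{equation*}
from which the uniform $\sigma/n$-continuity of $1/\pp$, combined with the slack $\sigma < 1-\alpha$, will yield uniform $\sigma'/n$-continuity of $1/\tilde p$ on $B$ for some $\sigma' \in (\sigma, 1-\alpha)$. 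The boundary log-H\"older condition $\tilde p \in \partial LH_0^{\tau_K}(B)$ will hold trivially: by choosing $R$ large enough (depending on $\tau_K$ and $\diam \overline{U_2}$), every ball $B_{x, \tau_K}$ with $\tau_K\,\dist(x,\partial B) \le 1/2$ lies inside the region $\{\tilde p \equiv p_0\}$, so the oscillation vanishes there.

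Once the extension is in hand, the rest is routine. Extending $f$ by zero to $\tilde f$ on $B$, the supports of $\tilde f$ and $\nabla \tilde f$ lie in $K \subset U_1$ where $\tilde p = \pp$, so the modulars (and hence the Luxemburg norms) on $B$ coincide with those on $\Omega$. Applying Theorem~\ref{theorem:sp-general-intro} on the ball $B$ (which is a bounded John domain) and bounding $\dist(\cdot,\partial B) \le 2R$ yields $\|\tilde f - \tilde f_B\|_{L^{\tilde q}(B)} \le C(2R)^{1-\alpha}\|\nabla f\|_{\Lp(\Omega)}$. To dispose of the mean, the variable-exponent H\"older inequality applied to $\int_\Omega f$, together with the bound $\|1\|_{L^{\tilde q}(B)} \lesssim |B|^{1/\tilde q_-}$ (valid since $\tilde q_- > 1$, which follows from~\eqref{eq:alpha-intro}), gives $\|\tilde f_B\|_{L^{\tilde q}(B)} \lesssim R^{-\gamma} \|f\|_{\Lq(\Omega)}$ for some $\gamma > 0$ and a constant depending only on $\Omega$ and $\pp$. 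The triangle inequality together with the choice of $R$ so large that $CR^{-\gamma} \le \tfrac{1}{2}$ then absorbs this term into the left-hand side and closes the estimate.

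I expect the extension step to be the main obstacle. A naive extension of $\pp$ by a constant across $\partial \Omega$ would generically destroy uniform $\sigma/n$-continuity, since the jump across $\partial \Omega$ could reach $1/p_-(\Omega) - 1/p_+(\Omega)$, which may well exceed $\sigma/n$. The partition-of-unity interpolation avoids this by spreading the transition across a layer $U_2 \setminus U_1$ at positive distance from $K$; the cost is a Lipschitz error of size $L|x-y|(1/p_-(\Omega) - 1/p_+(\Omega))$, and it is precisely for this error that the strict inequality $\sigma < 1-\alpha$ in the hypothesis (rather than equality) is needed.
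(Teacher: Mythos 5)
Your overall strategy (extend $\pp$ to a large ball, apply Theorem~\ref{theorem:sp-general-intro} there, then dispose of the mean) is the same as the paper's, but your extension step has a genuine gap: your exponent $\tilde p$ is built from a cutoff $\eta$ adapted to $K=\supp f$, so the extension — and hence the final constant — depends on $f$. Concretely, $\eta$ must drop from $1$ on a neighborhood of $K$ to $0$ before reaching $\partial\Omega$ (the interpolation formula needs $\pp$, which is only defined in $\Omega$), so its Lipschitz constant satisfies $L\gtrsim \dist(K,\partial\Omega)^{-1}$, and the $\delta$ in the uniform $\sigma'/n$-continuity of $1/\tilde p$ is at best comparable to $\dist(K,\partial\Omega)$. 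The constant in Theorem~\ref{theorem:sp-general-intro} genuinely depends on this $\delta$: in Lemma~\ref{lemma:Ut-sp} the cubes with $\diam(U_t)\geq\delta$ are subdivided into $M_t\lesssim |B|/\delta^n$ pieces of size $\approx\delta$, and the constant from Lemma~\ref{lemma:sp-cube-extension} grows with $M_t$ and with the ratios $|U_t|/|G_i|\lesssim|B|/\delta^n$. So your argument yields, for each compact $K\subset\Omega$, a constant $C_K$ valid for $f$ supported in $K$, with $C_K\to\infty$ as $K$ exhausts $\Omega$ — not the single constant the theorem asserts. The density step then fails exactly where it is needed: the differences $f_k-f_j$ approximating a general element of $W_0^{1,\pp}(\Omega)$ have supports creeping up to $\partial\Omega$. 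The paper avoids this by extending $\pp$ \emph{independently of $f$}: Lemma~\ref{lemma:extend-exp} (via Lemmas~\ref{lemma:uniform-boundary} and~\ref{lemma:closed-extension}, a Whitney-type extension in the spirit of Stein) extends the exponent across $\partial\Omega$ to a fixed ball $3B$ while preserving uniform $\varepsilon$-continuity — possible precisely because $\varepsilon$-continuity tolerates jumps, even though no continuous extension need exist — and makes the extension Lipschitz (in fact eventually constant) near $\partial(3B)$, which gives the boundary log-H\"older condition there.

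A secondary, more fixable issue: you bound the mean term by $\|\tilde f_B\|_{L^{\tilde q}(B)}\lesssim R^{-\gamma}\|f\|_{L^{\qq}(\Omega)}$ and absorb it by taking $R$ large, but the constant in Theorem~\ref{theorem:sp-general-intro} applied on $B(0,R)$ itself grows with $R$ (through $|B|$, the factor $d^{1-\alpha}\leq (2R)^{1-\alpha}$, and the local constants), so the absorption requires tracking that growth and is not automatic; it also needs $\|f\|_{L^{\qq}}<\infty$ a priori. The paper sidesteps this entirely by estimating $|f_{3B}|\leq C|3B|^{-1}\int_\Omega|\nabla f|\,dx\lesssim \|\nabla f\|_{L^{\pp}(\Omega)}$ via the classical $L^1$ Sobolev inequality and H\"older's inequality, which requires no absorption and no largeness of the ball.
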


One seeming drawback to Theorems~\ref{theorem:sp-general-intro} and~\ref{thm:sobolev-intro} is that we are not able to prove the inequality for $\qq=\ps$, but only for $\qq<\ps$.
We will discuss the technical reason for the restriction that $0\le \alpha<1$ in \eqref{eq:alpha-intro} below:  see Remark~\ref{rmk:alpha=1}. 
Broadly speaking, however, the problems lies in the \emph{local} inequality on cubes. Our arguments in the interior of $\Omega$ are  based on the constant exponent Sobolev-Poincaré inequality, and so require only weak regularity requirements on  the exponents.  However, this approach has a gap that prevents us from reaching the critical case. This gap, however, cannot be easily closed.  We construct an example of uniformly continuous exponent $\pp$ that is not log-H\"older continuous such that the Sobolev and Sobolev-Poincar\'e inequalities fail to hold with $\qq=\ps$:  see Section~\ref{section:necessity}.  This gives a positive answer to \cite[Problem~A.22]{CruzUribeFiorenza} and raises the question of determining the  domains $\Omega$ and continuity conditions stronger than uniform continuity but weaker than log-H\"older continuity for which these inequalities hold.

We also note that while Theorem~\ref{thm:sobolev-intro} applies to much more general domains and exponent functions $\pp$ than the result of Mercaldo, {\em et al.}, we are not actually able to recapture their result.  In their result their exponent $\pp$ is $\frac{1}{2}$-continuous, and (as we will see below in Corollary~\ref{cor:imp-poincare}) we need to assume something stronger than $\frac{1}{n}$-continuity.   It is an interesting question as to whether and to what extent their results on the $\pp$-Laplacian can be generalized using our results. 

The basic idea in our proof of Theorem~\ref{theorem:sp-general-intro} is to cover the domain $\Omega$ with cubes from a Whitney decomposition and prove a local Sobolev-Poincar\'e inequality on each cube.  Different techniques are required for cubes in the interior and those close to the boundary.  We then use the fact that $\Omega$ is a John domain to apply a local-to-global argument to patch together the local estimates.  We prove Theorem~\ref{thm:sobolev-intro} using the well-known fact that a Sobolev-Poincar\'e implies a Sobolev inequality, provided that one can estimate the average term $f_\Omega$ appropriately.  The main tool we need for this argument is an extension theorem for $\varepsilon$-continuous exponent functions that allows us to avoid assuming that the domain is a John domain and that we have the log-H\"older condition at the boundary.  It would be interesting to give a direct proof that did not pass through the Sobolev-Poincar\'e inequality.

The remainder of this paper is organized as follows.  In Section~\ref{section:prelim} we gather some preliminary results about variable Lebesgue and Sobolev spaces, and prove some basic properties of the $\tauLH(\Omega)$ condition.  In Section~\ref{section:hardy} we introduce a tree structure defined using the Whitney decomposition of a domain, and prove a boundedness result for a Hardy-type operator, $A_\Gamma$,  defined with respect to this tree structure.  In Section~\ref{section:var-Hardy} we introduce two more Hardy-type operators, $T_\pp$ and $T_\pp^\alpha$, where the averages over the Whitney cubes are formed using the variable Lebesgue space norm, and explore the role played by the $\tauLH(\Omega)$ condition.  We note that the operator $T_\pp$ is gotten by taking the parameter $\alpha=0$ in $T_\pp^\alpha$, but we have chosen to treat them separately, despite some repetition.  We have done so since a careful analysis of the difference between the proofs shows exactly where the restriction~\eqref{eq:alpha-intro} on $\alpha$ in Theorem~\ref{theorem:sp-general-intro} comes from.  In Section~\ref{section:john} we prove a decomposition theorem for functions on a John domain.  This decomposition is central to our ability to extend Sobolev-Poincar\'e inequalities defined on cubes to a John domain.  

In Section~\ref{section:improved-SPi} we prove Theorem~\ref{theorem:sp-general-intro}.  In order to do so we first prove a number of Sobolev-Poincar\'e inequalities on cubes.  Some of these results are known but we give all the details as we need to keep very careful track of the constants. We also give two corollaries that hold if we restrict  the global oscillation of $\pp$:  see Theorems~\ref{thm:sp-small-pp} and~\ref{thm:sp-big-pp}. In Section~\ref{section:sobolev} we prove Theorem~\ref{thm:sobolev-intro}; as we noted above the heart of the proof is an extension theorem for $\varepsilon$-continous functions; our proof is adapted from the proofs of~\cite[Chapter~ VI, Section~2]{S_SingularIntegrals} and~\cite[Lemma~2.4]{CruzUribeFiorenza}.   

In Section~\ref{section:necessity} we consider the question of the necessity of the boundary log-H\"older condition.  We cannot prove that it is necessary in general, but we show that it is very close to necessary: we construct a continuous exponent $\pp$ that is not in $\tauLH(\Omega)$, and inequality~\eqref{eq:sp-john-intro} fails to hold.  This is analogous to the situation for the boundedness of the Hardy-Littlewood maximal operator:  log-H\"older continuity is not necessary, but examples show that this is the weakest continuity condition which universally guarantees boundedness.  Our construction is very general, and so we adopt it to give additional examples.  First we construct a uniformly continuous exponent that is not log-H\"older continuous such that the Sobolev and Sobolev-Poincaré inequalities fail.  We then modify this example to show that the Korn inequality need not be true on $\Lp(\Omega)$ if we do not assume $\pp$ is log-H\"older continuous in the interior.  For this same example we show that the divergence equation cannot be solved in $\Lp(\Omega)$.  

  In Section~\ref{section:lh-boundary} we show that with modestly stronger assumptions on the domain, if an exponent function $\pp$ satisfies the $\tauLH(\Omega)$ condition, then it extends in a natural way to a function defined on $\partial \Omega$ that is log-H\"older continuous on the boundary.  This further reinforces referring to $\tauLH(\Omega)$ as boundary log-H\"older continuity.    Finally, in Section~\ref{section:applications} we apply our results to a problem in degenerate elliptic PDEs.  We use our improved Poincar\'e inequalities  and a result due to the first author,  Penrod, and Rodney~\cite{MR4332462},  to give solutions to a Neumann-type problem for a degenerate $\pp$-Laplacian.   

Throughout this paper, all notation is standard or will be defined as needed.  By $n$ we will always mean the dimension of the underlying space, $\R^n$.  Constants will be denoted by $C$, $c$, etc. and may change in value from line to line.  Given two quantities $A$ and $B$, if for some $c>0$ $A\leq cB$, then we will write $A\lesssim B$.  If $A\lesssim B$ and $B\lesssim A$, we will write $A\approx B$.

\section{Variable Lebesgue spaces and the boundary log-H\"older condition}
\label{section:prelim}

We begin by defining the variable Lebesgue space $\Lp(\Omega)$.  For complete information, see~\cite{CruzUribeFiorenza}.
Given a domain $\Omega\subset\R^n$, let $\Pp(\Omega)$ be the set of all Lebesgue measurable functions $p(\cdot):\Omega \to [1,\infty]$.  Given any measurable set $E\subset \Omega$, define
\[ p_-(E) = \essinf_{x\in E} p(x), \qquad p_+(E) = \esssup_{x\in E} p(x).  \]
For brevity we will write $p_-=p_-(\Omega)$ and $p_+ = p_+(\Omega)$.  Let $\Omega_\infty = \{ x\in \Omega : p(x)=\infty\}$ and $\Omega_0 = \Omega\setminus \Omega_\infty$. Define the space $\Lp(\Omega)$ to be the collection of all measurable functions $f$ such that for some $\lambda>0$,
\[ \rho_\pp(f/\lambda) = \int_{\Omega_0} \bigg(\frac{|f(x)|}{\lambda}\bigg)^{p(x)}\,dx + \lambda^{-1}\|f\|_{L^\infty(\Omega)} < \infty. \]
This becomes a Banach function space when equipped with the Luxemburg norm
\[ \|f\|_{\Lp(\Omega)} = \inf\{ \lambda > 0 : \rho_\pp(f/\lambda) \leq 1 \}.  \]
We will be working with functions in the variable Sobolev spaces.  Define  $W^{1,\pp}(\Omega)$ to be the space of  all functions $f\in W^{1,1}_{loc}(\Omega)$ (that is, locally integrable functions whose weak derivatives exist and are locally integrable) such that $f,\,\grad f \in \Lp(\Omega)$. Define $W^{1,\pp}_0(\Omega)$ to be the closure of Lipschitz functions of compact support in $W^{1,\pp}(\Omega)$ with respect to the norm $\|f\|_{W^{1,\pp}(\Omega)}=\|f\|_{L^\pp(\Omega)}+\|\grad f\|_{\Lp(\Omega)}$. 

Given $\pp \in \Pp(\Omega)$, define $\cpp\in \Pp(\Omega)$, the dual exponent function, pointwise by
\[ \frac{1}{p(x)} + \frac{1}{p'(x)} = 1,\]
with the convention that $1/\infty=0$.  We then have an equivalent expression for the norm, referred to as the associate norm:  
\begin{equation} \label{eqn:assoc-norm}
 \|f\|_{\Lp(\Omega)} \approx \sup_{\|g\|_{L^\cpp(\Omega)}\leq 1} \int_\Omega f(x)g(x)\,dx.
\end{equation}
We also have a version of H\"older's inequality:  given $f\in \Lp(\Omega)$ and $g \in L^\cpp(\Omega)$,
\begin{equation} \label{eqn:holder}
\int_\Omega |f(x)g(x)|\,dx \leq C\|f\|_{\Lp(\Omega)}\|g\|_{L^\cpp(\Omega)}. 
\end{equation}

If $\Omega$ is a bounded domain, then we have the following general embedding theorem.

\begin{lemma} \label{lemma:pp-qq-imbed}
Let $\Omega$ be a bounded domain.  Given $\pp,\,\qq \in \Pp(\Omega) $, if $p(x) \leq q(x)$ for almost every $x\in \Omega$, then $L^\qq(\Omega) \subset \Lp(\Omega)$ and $\|f\|_\pp \leq (1+|\Omega|)\|f\|_\qq$. 
\end{lemma}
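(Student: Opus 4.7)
My plan is to reduce by homogeneity of the Luxemburg norm to the normalized case $\|f\|_\qq=1$ and then show that $\rho_\pp(f/(1+|\O|))\le 1$, which by definition gives $\|f\|_\pp\le 1+|\O|$. The key pointwise tool is the elementary estimate
\[
\Bigl(\tfrac{a}{b}\Bigr)^{p(x)} \le \frac{1+a^{q(x)}}{b}, \qquad a\ge 0,\ b\ge 1,
\]
valid whenever $1\le p(x)\le q(x)<\infty$: indeed, if $a\le 1$ then $(a/b)^{p(x)}\le b^{-p(x)}\le b^{-1}$ (using $p(x)\ge 1$ and $b\ge 1$), while if $a\ge 1$ then $(a/b)^{p(x)}= a^{p(x)}b^{-p(x)}\le a^{q(x)}b^{-1}$.

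Next I would split $\O$ into $A=\{p<\infty,\,q<\infty\}$, $B=\{p<\infty,\,q=\infty\}$, and $\O^p_\infty=\{p=\infty\}$. Since $p(x)\le q(x)$ a.e.~forces $\O^p_\infty\subset\O^q_\infty$, the region $\{p=\infty,\,q<\infty\}$ is null, so $\O=A\cup B\cup \O^p_\infty$ up to a null set. Applying the pointwise inequality on $A$ with $a=|f(x)|$ and $b=1+|\O|$ gives
\[
\int_A\!\Bigl(\tfrac{|f|}{1+|\O|}\Bigr)^{p(x)}dx \;\le\; \frac{|A|+\int_A |f|^{q(x)}\,dx}{1+|\O|}.
\]
On $B$, the normalization $\|f\|_\qq=1$ forces $|f|\le \|f\|_{L^\infty(\O^q_\infty)}\le 1$ a.e., so $(|f|/(1+|\O|))^{p(x)}\le (1+|\O|)^{-1}$ (again since $p(x)\ge 1$), and the integral over $B$ is bounded by $|B|/(1+|\O|)$. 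For the $L^\infty$ portion of $\rho_\pp$, the inclusion $\O^p_\infty\subset\O^q_\infty$ gives $\|f\|_{L^\infty(\O^p_\infty)}\le \|f\|_{L^\infty(\O^q_\infty)}$.

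Assembling these pieces, and using that $A\subset\{q<\infty\}$ together with $\O^p_\infty\subset\O^q_\infty$ implies
\[
\int_A |f|^{q(x)}\,dx+\|f\|_{L^\infty(\O^p_\infty)} \;\le\; \rho_\qq(f)\le 1,
\]
I would conclude
\[
\rho_\pp\!\left(\tfrac{f}{1+|\O|}\right) \;\le\; \frac{|A|+|B|+\rho_\qq(f)}{1+|\O|} \;\le\; \frac{|\O|+1}{1+|\O|}=1,
\]
which finishes the argument. There is no substantive obstacle—the proof is a direct modular computation—the only delicate point being to arrange the case split so that the regions where $q(x)=\infty$ or $p(x)=\infty$ are handled via the $L^\infty$ bound that sits inside $\rho_\qq(f)$, rather than by the pointwise inequality, and then to reassemble everything so that the factor $|\O|+1$ in the numerator cancels precisely against $1+|\O|$ in the denominator.
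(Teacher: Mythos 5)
Your argument is correct. Note that the paper does not prove this lemma at all: it is stated as a standard embedding result for variable Lebesgue spaces over sets of finite measure (it is \cite[Corollary~2.48]{CruzUribeFiorenza}, with the same constant $1+|\Omega|$), and your modular computation is essentially the standard proof of that result. The reduction to $\|f\|_\qq=1$, the pointwise bound $(a/b)^{p(x)}\le (1+a^{q(x)})/b$ for $b\ge 1$, and the splitting of $\Omega$ according to where $\pp$ and $\qq$ are finite or infinite (using $\{p=\infty\}\subset\{q=\infty\}$ up to a null set) all check out, and the pieces reassemble to give $\rho_\pp\big(f/(1+|\Omega|)\big)\le 1$ as claimed. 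The only step you use silently is the norm--modular unit ball property, namely that $\|f\|_\qq= 1$ implies $\rho_\qq(f)\le 1$ (needed both for $\int_A|f|^{q(x)}\,dx+\|f\|_{L^\infty(\Omega^p_\infty)}\le 1$ and for $|f|\le 1$ a.e.\ on $B$); this follows from Fatou/monotone convergence in $\lambda$ and is \cite[Proposition~2.21]{CruzUribeFiorenza}, so it is worth a one-line citation but is not a gap.
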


\medskip

We now prove some properties of the boundary log-H\"older condition, Definition~\ref{tauLHB}. 
We first note that the bound $\tau d(x) \leq 1/2$ in this definition  is somewhat arbitrary:  the intention is that this condition holds for $x$ very close to $\partial \Omega$.  In particular, if $p_+<\infty$ and $0<a<1$, we get an equivalent condition if we assume $\tau d(x) \leq a$. In this case we also get that the classes are nested:  if $\tau_1<\tau_2$, then $\partial LH_0^{\tau_2}(\Omega) \subset \partial LH_0^{\tau_1}(\Omega)$.  To see this, we will show that~\eqref{Boundary LH constant} holds for $\tau_1$ when $\tau_1 d(x)\leq (\tau_1/\tau_2)^2<1$.  Then in this case, $\tau_2 d(x)\leq \tau_1/\tau_2<1$, so we may assume~\eqref{Boundary LH constant} holds for $\tau_2$.  But then we have that 
\begin{multline*}
     p_+(B_{x,\tau_1}) - p_-(B_{x,\tau_1}) 
\leq p_+(B_{x,\tau_2}) - p_-(B_{x,\tau_2}) \\
\leq \frac{C_{\tau_2}}{-\log(\tau_2 d(x))}
= \frac{C_{\tau_2}}{-\log(\tau_2/\tau_1) -\log(\tau_1 d(x))}
\leq \frac{2C_{\tau_2}}{-\log(\tau_1 d(x))}.
\end{multline*}

{  
\begin{lemma} \label{lemma:dual-LHB}
Given $\pp \in \tauLH(\Omega)$, if $p_-(B_{x,\tau}) \geq \gamma >1$ for every $x\in \Omega$, then $\cpp \in \tauLH(\Omega)$.
\end{lemma}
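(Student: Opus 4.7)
The plan is a direct computation exploiting the fact that the map $t \mapsto t/(t-1)$ is strictly decreasing on $(1,\infty)$. First I would fix $x \in \Omega$ with $\tau d(x) \le 1/2$ and write $p_\pm := p_\pm(B_{x,\tau})$. Since $p_- \ge \gamma > 1$, the exponent $\pp$ is bounded away from $1$ on $B_{x,\tau}$, so $\cpp$ is bounded on that ball. Because $t\mapsto t/(t-1)$ is decreasing, we have
\[
p'_+(B_{x,\tau}) = \frac{p_-}{p_- - 1}, \qquad p'_-(B_{x,\tau}) = \frac{p_+}{p_+ - 1}.
\]
(If $p_+ = \infty$ on a subset of $B_{x,\tau}$, then by $\pp \in \tauLH$ we would get $p_+ - p_- = \infty$, contradicting \eqref{Boundary LH constant}; so $p_+ < \infty$ on the relevant balls and the formula is legitimate.)

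Next I would compute the difference explicitly:
\[
p'_+(B_{x,\tau}) - p'_-(B_{x,\tau})
= \frac{p_-}{p_- - 1} - \frac{p_+}{p_+ - 1}
= \frac{p_+ - p_-}{(p_- - 1)(p_+ - 1)}.
\]
Since $p_- \ge \gamma > 1$ and $p_+ \ge p_- \ge \gamma$, the denominator is bounded below by $(\gamma - 1)^2$.

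Finally, applying the hypothesis $\pp \in \tauLH(\Omega)$ to the numerator gives
\[
p'_+(B_{x,\tau}) - p'_-(B_{x,\tau}) \le \frac{p_+ - p_-}{(\gamma-1)^2} \le \frac{C_0}{(\gamma-1)^2}\cdot\frac{1}{-\log(\tau d(x))},
\]
which is exactly \eqref{Boundary LH constant} for $\cpp$ with the new constant $C_0' := C_0/(\gamma-1)^2$ (which, like $C_0$, may depend on $\tau$). There is no real obstacle here; the only subtlety is ruling out $p_+ = \infty$ on the balls in question, which follows automatically from the $\tauLH$ hypothesis itself.
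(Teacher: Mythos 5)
Your proposal is correct and follows essentially the same route as the paper: both use the identities $p_+'(B_{x,\tau})=(p_-(B_{x,\tau}))'$, $p_-'(B_{x,\tau})=(p_+(B_{x,\tau}))'$ to write $p_+'(B_{x,\tau})-p_-'(B_{x,\tau})=\frac{p_+(B_{x,\tau})-p_-(B_{x,\tau})}{(p_-(B_{x,\tau})-1)(p_+(B_{x,\tau})-1)}$ and then bound the denominator below using $p_\pm(B_{x,\tau})\geq\gamma>1$ before invoking \eqref{Boundary LH constant}. Your explicit lower bound $(\gamma-1)^2$ for the denominator is in fact the careful version of the constant the paper states, and your side remark ruling out $p_+(B_{x,\tau})=\infty$ on the relevant balls is a harmless extra observation.
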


\begin{proof}
Given a set $E\subset \Omega$, we have that 
\[ p_+'(E) = \esssup_{x\in E} p'(x)= (p_-(E))', \qquad p_-'(E) = \essinf_{x\in E} p'(x) = (p_+(E))'.  \]
Therefore,
\begin{multline*}
    p_+'(B_{x,\tau}) - p_-'(B_{x,\tau})
    = (p_-(B_{x,\tau})' - (p_+(B_{x,\tau}))' \\
    = \frac{p_+(B_{x,\tau})- p_-(B_{x,\tau})}{(p_-(B_{x,\tau})-1)(p_+(B_{x,\tau})-1}
    \leq \gamma^{-2} (p_+(B_{x,\tau})- p_-(B_{x,\tau})) 
    \leq  \frac{\gamma^{-2}C_0}{-\log(\tau d(x))}.
\end{multline*}
\end{proof}
}

As we shall see, for proving our main results we will only need $\pp\in \tauLH(\Omega)$ for certain $\tau$ depending on the domain. However this particular value of $\tau$ can be unknown, so we give also the following more restrictive definition: 

\begin{definition}\label{LHB}
We say that $\pp$ is uniformly log-H\"older continuous at the boundary, and write $\pp\in \partial LH_0(\Omega)$, if $\pp\in \tauLH(\Omega)$ for every $\tau\geq 1$ with constant $C_0$ independent of~$\tau$.
\end{definition}

\begin{remark}
For our work below on John domains we could actually assume the weaker condition that $\pp\in \tauLH(\Omega)$ for all $\tau\geq 1$ without the constant being independent of $\tau$.  We make this definition to establish a stronger class that may be needed for future work.
\end{remark}

It is easy to check that $LH_0(\Omega)\subset \partial LH_0(\Omega)$. The following example shows that there are exponents in $\partial LH_0(\Omega)$ that do not belong to $LH_0(\Omega)$.
Let  $\Omega=B(0,1)\subset\R^2$ be the unit disc. Let $A_1$ and $A_2$ be a partition of $\Omega$ into Lebesgue measurable sets (i.e. $A_1\cup A_2=\Omega$ and $A_1\cap A_2=\emptyset$). Let also $1\le p_-<p_+<\infty$. We define $\pp:\Omega\to\R$ as
\[p(x) = \left\{\begin{array}{ll} p_- & \textrm{if }x\in A_1\\
                                                   p_- + (p_ + -p_-)\frac{\log(2)}{-\log(\frac{d(x)}{2})}& \textrm{if } x\in A_2.\end{array}\right.\]
Then, it is easy to check that $\pp\in \partial LH_0(\Omega)$, and it is discontinuous for any $x$ in $\Omega$ that belongs to $\overline{A_1}\cap \overline{A_2}$. Moreover $A_1$ and $A_2$ can be chosen so that $\pp$ is discontinuous at every point in the domain. 

The following lemma gives an equivalent form of property $\partial LH_0^\tau$. It is an analogue to Lemma \cite[Lemma 3.25]{CruzUribeFiorenza} (see also  \cite[Lemma 4.1.6]{DieningBook}).

\begin{lemma}\label{lemma:LHBequiv}
Let $\Omega\subset \R^n$ a domain, $\tau\geq 1$, and $\pp:\Omega\to\R$, $ 1\le p_-\le p_+<\infty$, then the following statements are equivalent: 
\begin{itemize}
\item[(a)] $p(\cdot)\in \tauLH(\Omega)$;
\item[(b)] $|B_{x,\tau}|^{-(p(y)-p_-(B_{x,\tau}))}\le C$ for every $x\in \Omega$ and almost every $y\in B_x$;
\item[(c)] $|B_{x,\tau}|^{-(p_+(B_{x,\tau})-p_-(B_{x,\tau}))}\le C$ for every $x\in \Omega$.
\end{itemize}
\end{lemma}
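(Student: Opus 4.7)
The plan is to establish the chain of implications (a)$\Rightarrow$(c)$\Rightarrow$(b)$\Rightarrow$(a), exploiting the identity $|B_{x,\tau}|=c_n(\tau d(x))^n$ (where $c_n$ is the measure of the unit ball in $\R^n$) to relate the logarithm in (a) to the power of $|B_{x,\tau}|$ appearing in (b) and (c). In each implication it is natural to split into two regimes: a \emph{small} regime where $\tau d(x)$ is below some threshold, so that $|B_{x,\tau}|<1$ and $-\log|B_{x,\tau}|$ is comparable to $-n\log(\tau d(x))$; and a \emph{large} regime where $|B_{x,\tau}|$ is bounded below, in which everything is controlled by the trivial estimate $p_+(B_{x,\tau})-p_-(B_{x,\tau}) \le p_+(\Omega) - p_-(\Omega)<\infty$.

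The implication (c)$\Rightarrow$(b) is essentially immediate: since $p_-(B_{x,\tau}) \le p(y) \le p_+(B_{x,\tau})$ for a.e.\ $y \in B_{x,\tau}$, the exponent $p(y)-p_-(B_{x,\tau})$ is nonnegative and bounded by $p_+(B_{x,\tau})-p_-(B_{x,\tau})$. When $|B_{x,\tau}|<1$, the map $a\mapsto |B_{x,\tau}|^{-a}$ is increasing, so (c) directly bounds the left side of (b); when $|B_{x,\tau}|\ge 1$, the left side of (b) is at most $1$. For (a)$\Rightarrow$(c), taking logarithms reformulates (c) as the claim that $(p_+(B_{x,\tau})-p_-(B_{x,\tau}))\cdot(-\log|B_{x,\tau}|)$ is uniformly bounded. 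Substituting $-\log|B_{x,\tau}| = -n\log(\tau d(x)) - \log c_n$ and applying the bound from (a) produces a dominant term of size $nC_0$, with an error term controlled by the assumption $\tau d(x)\le 1/2$. For $x$ with $\tau d(x)>1/2$ the radius is bounded below, $|B_{x,\tau}|$ is therefore bounded below, and (c) follows from the trivial bound on $p_+-p_-$.

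Finally, for (b)$\Rightarrow$(a), I would first take the essential supremum over $y\in B_{x,\tau}$ in (b) and use the monotonicity of $a\mapsto |B_{x,\tau}|^{-a}$ as above to recover (c). To then pass from (c) to (a) one solves for $p_+(B_{x,\tau})-p_-(B_{x,\tau})$: provided $\tau d(x)$ is small enough that $-\log|B_{x,\tau}| \ge \tfrac{n}{2}(-\log(\tau d(x)))$ (which is where the constant $c_n$ stops mattering asymptotically), one obtains (a) with constant $\tfrac{2}{n}\log C$. For $\tau d(x)$ in the intermediate range where $-\log(\tau d(x))$ is bounded but still at least $\log 2$, the conclusion of (a) is trivial from $p_+-p_-\le p_+(\Omega)-p_-(\Omega)<\infty$ by enlarging the constant. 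I anticipate no substantial obstacle in this argument; the only real care needed is cleanly organizing the case splits and tracking how the constant $c_n$ and the assumption $p_+(\Omega)<\infty$ absorb the boundary cases.
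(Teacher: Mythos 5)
Your argument is correct and follows essentially the same route as the paper's proof: both rest on the identity $|B_{x,\tau}|=c_n(\tau d(x))^n$, taking logarithms to convert between the boundary log-H\"older bound and the power bounds, splitting into the regimes $\tau d(x)\le 1/2$ and $\tau d(x)>1/2$ (where $p_+<\infty$ gives the trivial bound), and passing from (b) to (c) by exhausting the essential supremum of $p$ on the ball. The only difference is organizational — you run the cyclic chain (a)$\Rightarrow$(c)$\Rightarrow$(b)$\Rightarrow$(a) while the paper proves (a)$\Leftrightarrow$(b) and (b)$\Leftrightarrow$(c) — and this changes nothing of substance.
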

\begin{proof}

{  We first show that {\rm (a)} implies {\rm (b)}.  Fix $x\in \Omega$ and suppose first that $\tau d(x)>1/2$.  Then 
$|B_{x,\tau}|=c_n (\tau d(x))^n \geq c_n/2^n$.  Therefore,  since for almost every $y \in B_{x,\tau}$, $p(y)\leq p_+(B_{x,\tau})\leq p_+$,
\[ |B_{x,\tau}|^{-(p(y)-p_-(B_{x,\tau}))} \leq (1+2^n/c_n)^{p_+-p_-} = C.
\]
On the other hand, if  $\tau d(x)\leq 1/2$, then 
\begin{multline*}
\log(|B_{x,\tau}|^{p_-(B_{x,\tau})-p(y)}) 
= (p(y)-p_-(B_{x,\tau}))\log(|B_{x,\tau}|^{-1}) \\
\le \frac{C_0}{-\log(\tau d(x))}\log(|B_{x,\tau}|^{-1})
= \frac{C_0}{-\log(\tau d(x))}\log((\tau d(x))^{-n}/c_n)\le C\cdot C_0,
\end{multline*}
where $C$ depends only on the dimension $n$.}

%We first prove that {\rm (a)} implies {\rm (b)}:  for almost every $y\in B_{x,\tau}$, $p(y)\leq p_+(B_{x,\tau})$, and so 

%\begin{align*}
%\log(|B_{x,\tau}|^{p_-(B_{x,\tau})-p(y)}) &= (p(y)-p_-(B_{x,\tau}))\log(|B_{x,\tau}|^{-1}) \\
%&\le \frac{C_0}{-\log(\tau d(x))}\log(|B_{x,\tau}|^{-1})\\ 
%&\le \frac{C_0}{-\log(\tau d(x))}C\log((\tau d(x))^{-n})\le C,
%\end{align*}
%where $C$ depends on $\tau$ and the dimension $n$. 

To prove  that {\rm (b)} implies {\rm (a)}, note that for every $x\in \Omega$ and almost every $y\in B_{x,\tau}$:
\begin{equation*}
\left(\frac{1}{\tau d(x)}\right)^{p(y)-p_-(B_{x,\tau})} 
= (\tau d(x))^{p_-(B_{x,\tau})-p(y)} \\
 \le C |B_{x,\tau}|^{(p(y)-p_-(B_{x,\tau}))/n} \le C, 
\end{equation*}
where $C$ depends on $\tau$, $n$ and $p_+-p_-$. If we take the logarithm, we obtain
\[\big(p(y)-p_-(B_{x,\tau})\big)\log\left(\frac{1}{\tau d(x)}\right) \le \log{C},\]
which is equivalent to  the $\tauLH(\Omega)$ condition.

Since $p(y)\leq p_+(B_{x,\tau})$ for almost every $y \in B_{x,\tau}$, {\rm (c)} implies {\rm (b)}.  To prove the converse, it suffices to note that there exists a sequence $\{y_k\} \subset B_{x,\tau}$ such that {\rm (b)} holds and $p(y_k) \rightarrow p_+(B_{x,\tau})$. If we pass to the limit, we get {\rm (c)}.
\end{proof}

\bigskip

Let us recall that a Whitney decomposition of $\O$ is a collection $\{Q_t\}_{t\in\Gamma}$ of closed dyadic cubes, whose interiors are pairwise disjoint, which satisfies
\begin{enumerate}
\item $\O=\bigcup_{t\in\Gamma}Q_t$,
\item $\diam(Q_t) \leq d(Q_t,\partial\Omega) \leq 4\diam(Q_t)$, \label{Whitney}
\item $\frac{1}{4}\text{diam}(Q_s)\leq \text{diam}(Q_t)\leq 4\text{diam}(Q_s)$, if $Q_s\cap Q_t\neq \emptyset$.
\end{enumerate}

\begin{remark}
Let $Q\subset \Omega$ be a Whitney cube with side length equal to $2^{-k}$. Then, if $\pp \in \tauLH(\Omega)$ for any $\tau \geq 1$, 
\[ p_+(Q) - p_-(Q) \leq \frac{C}{k}. 
\]
%that is, $\ell(Q)=2^{-n}$ and 
%
%\[ \diam(Q)\leq d(Q,\partial \Omega) \leq 4 \diam(Q). \]
%
To see this, let $x_Q$ be the center of $Q$; then 
$d(x_Q) \leq \diam(Q)+\dist(Q,\partial\Omega) \leq 5\diam(Q)=5\sqrt{n}2^{-k}$. 
%$d(x_Q) \geq \dist(Q,\partial\Omega) \geq \diam(Q)=\sqrt{n}2^{-n}$.  
Hence, for any $\tau\geq 1$, $Q\subset B_{x_Q,1} \subset B_{x_Q,\tau}$, and so 
\[ p_+(Q) -p_-(Q) \leq p_+(B_{x_Q,1})-p_-(B_{x_Q,1}) \leq \frac{C_0}{-\log(d(x_Q))} \leq \frac{C_0}{-\log(5\sqrt{n}2^{-k})}\leq \frac{C}{k}. 
\]
\end{remark}

\bigskip

\section{A  Hardy-type operator on variable Lebesgue spaces}
\label{section:hardy}

 Our proof of Theorem~\ref{theorem:sp-general-intro} is based on a local-to-global argument that extends the validity of the inequalities from Whitney cubes to the entire domain $\Omega$ in $\R^n$. We decompose $\Omega$ into a collection of Whitney cubes $\{Q_t\}_{t\in\Gamma}$ and identify two cubes as adjacent if they intersect each other in a $n-1$ dimensional face. It is often helpful to view this discretization of the domain as a graph whose vertices are the Whitney cubes (technically, we consider a small expansion of the cubes) and two cubes are connected by an edge if they are adjacent in the sense given above. We can then define a rooted spanning tree on this graph, where the root is simply a distinguished cube. Usually, we take one of the largest Whitney cubes as the root, but it could be any other cube. This tree structure on the Whitney cubes contains the geometry of the domain, which is fundamental for this analysis. 

To apply this perspective, we recall some definitions and prove some basic results.
A tree is a graph $(V,E)$, where $V$ is the set of vertices and $E$ the set of edges, satisfying that it is connected and has no cycles. A tree is said to be rooted if one vertex is designated as the root. In a rooted tree $(V,E)$, it is possible to define a {\it partial order} ``$\preceq$" in $V$ as follows: $s\preceq t$ if and only if the unique path connecting $t$ to the root $a$ passes through $s$. We write $t \succeq s$ if $s\preceq t$.

{\it The parent} $t_p$ of a vertex $t$ is the vertex connected to $t$ by an edge on the path to the root. It can be seen that each $t\in V$ different from the root has a unique parent, but several elements ({\it children}) in $V$ could have the same parent. Note that two vertices are connected by an edge ({\it adjacent vertices}) if one is the parent of the other. For simplicity, we say that a set of indices $\Gamma$ has a tree structure if $\Gamma$ is the set of vertices of a rooted tree  $(\Gamma,E)$. Also, if the partial order ``$\preceq$" in $\Gamma$ is a total order (i.e. each element in $\Gamma$ has no more than one child), we say that $\Gamma$ is a {\it chain}, or has a chain structure.
For convenience, let us introduce the following notation: \[\Gamma^* = \Gamma\setminus\{a\}.\]

\begin{definition}\label{def:tree-covering}
Let $\Omega\subset\R^n$ be a bounded domain. We say that an open covering $\{U_t\}_{t\in\Gamma}$ is a {\it tree-covering} of $\Omega$ if it also satisfies the properties: 
\begin{enumerate}
\item $\chi_\Omega(x)\leq \sum_{t\in\Gamma}\chi_{U_t}(x)\leq C_1 \chi_\Omega(x)$, for almost every $x\in\Omega$, where $C_1\geq 1$.
\item The set of indices $\Gamma$ has the structure of a rooted tree.
\item There is a collection $\{B_t\}_{t\neq a}$ of pairwise disjoint open sets such that $B_t\subseteq U_t\cap U_{t_p}$, and there is a constant $C_2$ such that: $\frac{|U_t|}{|B_t|}\le C_2$ for every $t\in\Gamma$.
\end{enumerate}
\end{definition}

{
\begin{remark}\label{remark:tree-covering} 
Let $\O$ be a bounded domain and $\{Q_t\}_{t\in\Gamma}$ a Whitney decomposition of it. If we take $U_t$ to be a small expansion of the interior of $Q_t$, for example $U_t=\frac{17}{16}{\mathrm{int}}(Q_t)$, it is clear that we can define a rooted tree structure for $\Gamma$ such that two vertices $s$ and $t$ are adjacent along the tree only if $Q_t\cap Q_s$ share a $n-1$ dimensional face. Hence, every bounded domain admits a tree covering composed of expanded Whitney cubes. 

This tree covering is not unique,  so care should be taken in order to select a tree-covering that contains meaningful information about the geometry of the domain. For example, it is known that the quasi-hyperbolic distance between two cubes in a Whitney decomposition is comparable with the shorter chain of Whitney cubes connecting them (see \cite{H1}). Hence, we can take the open covering $\{U_t\}_{t\in \Gamma}$ and apply an inductive argument to define a tree-covering such that the number of Whitney cubes in each chain to the root is minimal. This tree structure contains some geometric information in terms of the quasi-hyperbolic distance.
\end{remark}
}
For each $t$ we define $W_t$, the \emph{shadow} of $U_t$, to be the set
\[W_t = \bigcup_{s\succeq t} U_s.\]

\bigskip

Let $\Omega\subset\R^n$ be a bounded domain with a tree-covering $\{U_t\}_{t\in\Gamma}$. We define the following Hardy-type operator on the shadows $W_t$:
\begin{equation}\label{def of A}
A_\Gamma f(x) = \sum_{t\in \Gamma^*} \frac{\chi_{B_t}(x)}{|W_t|}\int_{W_t} |f(y)| \dy.
\end{equation}

\begin{theorem}\label{thm: continuity of A}
Let $\Omega\subset\R^n$ be a bounded domain with a tree-covering $\{U_t\}_{t\in\Gamma}$. Suppose $\pp\in \mathcal{P}(\Omega)$ is such that $1<p_-\leq p_+ <\infty$, and  there is a constant $C$ such that 
\begin{equation} \label{eqn:tree-diening}
|W_t|^{-(p(y)-p_-(W_t))}\le C
\end{equation}
for every $t\in \Gamma$ and almost every $y\in W_t$. Then the operator $A_\Gamma$ defined in \eqref{def of A} is bounded from $L^{\pp}(\Omega)$ to itself.
\end{theorem}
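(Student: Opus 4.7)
The plan is to dominate $A_\Gamma f$ pointwise by a tree-structured maximal operator $M_\Gamma$ and then prove the boundedness of $M_\Gamma$ on $\Lp(\Omega)$ by adapting Diening's strategy for the Hardy--Littlewood maximal operator, with hypothesis \eqref{eqn:tree-diening} replacing log-H\"older continuity. Define
\[
M_\Gamma f(x) \;:=\; \sup\Bigl\{|W_t|^{-1}\int_{W_t}|f|\,dy \;:\; t\in\Gamma,\ x\in W_t\Bigr\}.
\]
For $x\in B_t\subset W_t$, the value $A_\Gamma f(x)=I_t:=|W_t|^{-1}\int_{W_t}|f|$ appears in the supremum defining $M_\Gamma f(x)$, so $A_\Gamma f\leq M_\Gamma f$ a.e., and it suffices to prove $M_\Gamma:\Lp(\Omega)\to\Lp(\Omega)$.

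Along each chain in $\Gamma$ the shadows $W_t$ are nested (if $s\preceq t$ then $W_s\supseteq W_t$), and by the bounded-overlap property of the tree covering the set $\{t:x\in W_t\}$ is contained in a union of at most $C_1$ such chains. Hence $M_\Gamma$ behaves as a finite union of dyadic martingale maximal operators, and Doob's inequality yields $M_\Gamma:L^q(\Omega)\to L^q(\Omega)$ for every $1<q\leq\infty$, with norm depending only on $q$ and $C_1$.

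To transfer this to the variable exponent $\pp$, normalize $\|f\|_{\Lp(\Omega)}\leq 1$ and aim at the modular bound $\rho_\pp(M_\Gamma f)\leq C$. The core ingredient is a Diening-type pointwise estimate: for a.e.\ $x\in\Omega$ and every $t$ with $x\in W_t$,
\[
I_t^{p(x)} \;\lesssim\; I_t^{p_-(W_t)},
\]
up to a bounded additive correction from the finitely many shadows with $|W_t|\geq 1$. For $I_t\leq 1$ this is immediate since $p(x)\geq p_-(W_t)$. For $I_t>1$, factor $I_t^{p(x)}=I_t^{p_-(W_t)}\cdot I_t^{p(x)-p_-(W_t)}$; the variable H\"older inequality in $\Lp(W_t)$ together with the normalization $\|f\|_{\Lp(\Omega)}\leq 1$ yields $I_t\lesssim|W_t|^{-1/p_-(W_t)}$ when $|W_t|\leq 1$, and hypothesis \eqref{eqn:tree-diening}, from which one extracts $|W_t|^{-(p_+(W_t)-p_-(W_t))}\leq C$ by passing to the essential supremum, absorbs the factor $I_t^{p(x)-p_-(W_t)}$ into a bounded constant. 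Combining this with Jensen's inequality at the local constant exponent $p_-(W_t)>1$, the elementary bound $|f(y)|^{p_-(W_t)}\leq 1+|f(y)|^{p(y)}$ on $W_t$, and the $L^q$-boundedness of $M_\Gamma$ from the previous paragraph applied with a suitable $q\in(1,p_-)$ (to $\tilde f:=|f|^{\pp/q}$, which satisfies $\|\tilde f\|_{L^q(\Omega)}^q=\rho_\pp(f)\leq 1$), one concludes $\rho_\pp(M_\Gamma f)\lesssim 1$, whence the norm bound.

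The main obstacle is the Diening-type pointwise estimate. In the classical proof, Diening's argument uses log-H\"older continuity at the scale $|x-y|$; here the relevant scale is $|W_t|$, which is typically much coarser, and \eqref{eqn:tree-diening} is tailored precisely to control oscillation at this scale---it is essential that $B_t\subset W_t$ so that this coarser scale is the one that actually governs $I_t^{p(x)}$. The technically delicate point is the case $I_t>1$, where one must simultaneously invoke a variable H\"older estimate and the geometric hypothesis to absorb the excess $p(x)-p_-(W_t)$. Once this pointwise estimate is in place, the remainder of the argument follows the usual modular framework combined with the constant-exponent martingale inequality.
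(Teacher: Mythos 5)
Your overall strategy is viable and genuinely different from the paper's. The paper never introduces a maximal operator: since $A_\Gamma f$ is supported on the pairwise disjoint sets $B_t$, the modular $\int_\Omega (A_\Gamma f)^{p(x)}\,dx$ splits into a sum over $t$; hypothesis \eqref{eqn:tree-diening}, together with the a priori bound $\int_\Omega(|f|+1)^{p(y)}\,dy\lesssim 1$, lowers the exponent from $p(x)$ to $p_-(W_t)$, a classical H\"older step lowers it further to the global constant $p_-$, and the proof closes by invoking the constant-exponent boundedness of $A_\Gamma$ itself on $L^{p_-}(\Omega)$ from \cite{L1}. You instead dominate $A_\Gamma$ by the shadow maximal operator $M_\Gamma$ and run the Diening scheme at scale $|W_t|$; this proves a stronger statement, and (once the constant-exponent bound for $M_\Gamma$ is established) it is self-contained and would in particular recover the cited lemma of \cite{L1}. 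Your absorption step for $I_t>1$ (H\"older giving $I_t\lesssim |W_t|^{-1/p_-(W_t)}$, then \eqref{eqn:tree-diening}) is correct.

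Two steps, however, do not hold up as written. First, the $L^q$-boundedness of $M_\Gamma$: the chains covering $\{t:x\in W_t\}$ depend on $x$, and a branching tree is not a finite union of chains, so $M_\Gamma$ is not dominated by finitely many fixed-chain martingale maximal operators and Doob's inequality cannot be applied as stated. The conclusion is nevertheless true: if $\mathcal{F}=\{t:\ |W_t|^{-1}\int_{W_t}|f|>\lambda\}$ and $\mathcal{F}^*$ is the set of its $\preceq$-minimal elements, then $\bigcup_{t\in\mathcal{F}}W_t=\bigcup_{t\in\mathcal{F}^*}W_t$, the elements of $\mathcal{F}^*$ are pairwise incomparable, and shadows of incomparable vertices overlap at most $C_1$ times (almost every point lies in at most $C_1$ cubes $U_s$, and incomparable vertices have disjoint descendant sets); this yields the weak $(1,1)$ bound with constant $C_1$, and interpolation with $L^\infty$ gives $L^q$ for $1<q\le\infty$.

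Second, the final assembly: Jensen at exponent $p_-(W_t)$ combined with $|f|^{p_-(W_t)}\le 1+|f|^{p(\cdot)}$ leaves you with averages of $|f|^{p(\cdot)}$, a merely integrable function, and $M_\Gamma$ is not bounded on $L^1$; invoking the $L^q$ bound on $\tilde f$ at that stage does not connect, since Jensen gives $\bigl(|W_t|^{-1}\int_{W_t}\tilde f\bigr)^q\le |W_t|^{-1}\int_{W_t}|f|^{p(\cdot)}$, the wrong direction. The repair, which your own choice $\tilde f=|f|^{\pp/q}$ points to, is to prove the pointwise estimate for the exponent $\pp/q$: discard $|f|\chi_{\{|f|\le1\}}$ (harmless on a bounded domain), and for $f_1=|f|\chi_{\{|f|>1\}}$ use Jensen at exponent $p_-(W_t)/q>1$, the inequality $f_1^{p_-(W_t)/q}\le f_1^{p(\cdot)/q}$, the bound $\int_\Omega f_1^{p(y)/q}\,dy\le\rho_\pp(f)\le1$, and \eqref{eqn:tree-diening} to get $\bigl(|W_t|^{-1}\int_{W_t}f_1\bigr)^{p(x)/q}\le C\,|W_t|^{-1}\int_{W_t}f_1^{p(y)/q}\,dy$ for a.e.\ $x\in W_t$; taking the supremum over $t$, raising to the power $q$, and applying the $L^q$ bound for $M_\Gamma$ closes the modular estimate. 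With these two repairs your argument is complete; note also that the caveat about finitely many shadows with $|W_t|\ge1$ is unnecessary, since for such $t$ the negative powers of $|W_t|$ cause no harm.
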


%\begin{theorem}\label{thm: continuity of A}
%Let $\Omega\subset\R^n$ be a bounded John domain with a tree-covering $\{U_t\}_{t\in\Gamma}$. Given $\qq\in \mathcal{P}(\Omega)$,  if $\qq \in \partial LH_0^{\tau_K}$ and $1<q_-\leq q_+ <\infty$, then the operator $A_\Gamma$ defined in \eqref{def of A} is bounded from $L^{q(\cdot)}(\Omega)$ to itself.
%\end{theorem}

\begin{proof}

By homogeneity, it suffices to prove that there is a constant $C=C(p(\cdot),\Omega)$ such that

\[\int_\Omega A_\Gamma f(x)^{p(x)}\dx\leq C\]
for any $f\in L^{p(\cdot)}(\Omega)$ with $\|f\|_{L^{p(\cdot)}(\Omega)}=1$. Fix such a function $f$.

Since the sets in the collection $\{B_t\}_{t\neq a}$ are pairwise disjoint we have that
\begin{align*}
\int_\Omega A_\Gamma f(x)^{p(x)}\dx 
&= \int_\Omega \sum_{t\in \Gamma^*}\dfrac{\chi_{B_t}(x)}{|W_t|^{p(x)}}\left(\int_{W_t}|f(y)|\dy\right)^{p(x)}\dx\\ 
&\leq \int_\Omega \sum_{t\in \Gamma^*}\dfrac{\chi_{B_t}(x)}{|W_t|^{p(x)}}\left(\int_{W_t}|f(y)|+1\dy\right)^{p(x)}\dx\\ 
&\leq \int_\Omega \sum_{t\in \Gamma^*}\dfrac{\chi_{B_t}(x)}{|W_t|^{p(x)}}\left(\int_{W_t}\left(|f(y)|+1\right)^{p(y)/p_-(W_t)}\dy\right)^{p(x)}\dx.
\end{align*}

Since $\|f\|_{L^{p(\cdot)}(\Omega)}=1$ and  $p_+<\infty$ we have that
\begin{multline}  \label{estimations}
\int_{W_t}\left(|f(y)|+1\right)^{p(y)/p_-(W_t)}\dy
\leq 
\int_{\Omega}\left(|f(y)|+1\right)^{p(y)/p_-(W_t)}\dy\\ 
\leq 
\int_{\Omega}\left(|f(y)|+1\right)^{p(y)}\dy
\leq 
2^{p_+}\left(\int_{\Omega}|f(y)|^{p(y)}\dy+|\Omega|\right)
\leq 2^{p_+}\left(1+|\Omega|\right). 
\end{multline}

Also, by \eqref{eqn:tree-diening}  there is a constant $C=C(\Omega,p(\cdot))$ such that 
\begin{equation*}
    \dfrac{1}{|W_t|^{p(x)}}\leq \dfrac{C}{|W_t|^{p_-(W_t)}}
\end{equation*}
for all $t\in \Gamma$ and almost every $x\in W_t$. Thus, if we combine the above estimates, we get
\begin{align*}
\int_\Omega A_\Gamma f(x)^{p(x)}\dx 
&\leq C \int_\Omega \sum_{t\in \Gamma^*}\dfrac{\chi_{B_t}(x)}{|W_t|^{p_-(W_t)}}\left(\int_{W_t}\left(|f(y)|+1\right)^{p(y)/p_-(W_t)}\dy\right)^{p_-(W_t)}\dx.
\end{align*}

We want to replace $p_-(W_t)$ by $p_-$ in the previous inequality. If $p_t:=\dfrac{p_-(W_t)}{p_-}=1$, then this is immediate. Otherwise, if $p_t>1$, we apply the classical H\"older inequality in the last integral with exponent $p_t$ to get
\begin{align*}
\int_\Omega A_\Gamma f(x)^{p(x)}\dx 
&\leq C \int_\Omega \sum_{t\in \Gamma*}\dfrac{\chi_{B_t}(x)}{|W_t|^{p_-}}
\left(\int_{W_t}\left(|f(y)|+1\right)^{p(y)/p_-}\dy\right)^{p_-}\dx.
\end{align*}

Finally, again using that the sets  $\{B_t\}_{t\neq a}$ are pairwise disjoint and using the fact that the operator $A_\Gamma$  is bounded on $L^{p_-}(\Omega)$ if $p_->1$, proved by the second author in \cite[Lemma 3.1]{L1}), we get
\begin{align*}
\int_\Omega A_\Gamma f(x)^{p(x)}\dx 
&\leq C \int_\Omega \left(\sum_{t\in \Gamma}\dfrac{\chi_{B_t}(x)}{|W_t|^{p_-}}\int_{W_t}\left(|f(y)|+1\right)^{p(y)/p_-}\dy\right)^{p_-}\dx\\ 
&\leq C \int_\Omega A_\Gamma \left((|f(x)|+1)^{p(y)/p_-}\right)^{p_-}\dx\\ 
&\leq C \int_\Omega (|f(x)|+1)^{p(y)}\dx \\ 
&\leq C(\Omega,p(\cdot)).
\end{align*}
This completes the proof.
\end{proof}

\section{Variable exponent Hardy-type operators and log-H\"older continuity at the boundary}
\label{section:var-Hardy}

In this section we prove several results necessary for our local-to-global argument.  Before doing so, however, we need to explore the relationship between the $\tauLH(\Omega)$ condition and some other averaging-type conditions on the exponent functions.  Our starting point is the observation that many  results which  are obviously true in classical Lebesgue spaces can fail in the variable exponent setting. A very relevant example of this is the so called $K_0$ condition, introduced by Kopaliani~\cite{Kopaliani2007}, which states that 
\[\sup_B |B|^{-1}\|\chi_B\|_\pp\|\chi_B\|_\cpp <\infty,\]
where the supremum is taken over all balls contained in a certain domain. If $\pp$ is constant, the argument of the supremum is exactly $1$ for every ball.  On the contrary, if $\pp$ is not constant, the boundedness of the supremum is not guaranteed: a simple example is given by $p(x)=2+\chi_Q(x)$, where $Q$ is any cube.  (See~\cite[Section~4.4]{CruzUribeFiorenza}.)  Thus, this condition sometimes needs to be imposed on $\pp$ as a hypothesis.

In this section we consider several auxiliary results of this kind, which are crucial in the sequel.  These or similar results are known in the literature given the assumption that the exponent $\pp$ satisfies the global $LH_0(\Omega)$ condition.  However, here we only assume that $\pp \in \partial LH_0^\tau$ for some $\tau\geq 1$; this is sufficient as we will  restrict the analysis to cubes/balls with radius proportional to the distance to the boundary. 

Throughout this section, let $\Omega$ be a bounded domain and $\{U_t\}_{t\in\Gamma}$ a  tree-covering of $\Omega$, like the one given by Remark \ref{remark:tree-covering}. Recall that $U_t$ is an expansion of a Whitney cube $Q_t$, $U_t=\frac{17}{16}{\mathrm{int}}Q_t$. Hence, if we let $x_t$ be the center of $U_t$, then for any $\tau\geq 1$ we have that
\begin{equation}\label{BxtauUt}
U_t\subset B(x_t,\tau d(x_t)) \quad\quad\textrm{and} \quad\quad 
 |U_t|\sim |B(x_t,\tau d(x_t))|,
\end{equation}
where the implicit constants only depend on $n$ and $\tau$. Combined with~Lemma \ref{lemma:LHBequiv}, this implies that if $\pp\in \partial LH^\tau_0$, then
\begin{equation}\label{LHBUt}
    |U_t|^{-(p(y)-p_-(U_t))}\le C    
\end{equation}
for every $t\in \Gamma$ and almost every $y\in U_t$. We will make extensive use of these properties. {  In particular, we will show that properties that hold for balls close to the boundary, as in the $\tauLH(\Omega)$ condition, also hold for the cubes $U_t$ in a tree-covering. Depending on the circumstances, we will emphasize either balls or cubes.}

\bigskip

\begin{lemma}\label{lemma:Tsbounded} Let $\Omega\subset\R^n$ be a bounded domain with a tree-covering $\{U_t\}_{t\in\Gamma}$ as the one given by Remark \ref{remark:tree-covering}. Fix $\pp \in \Pp(\Omega)$, $1\leq p_- \leq p_+<\infty$.  We define the operator
    \[T_{p(\cdot)} f(x) = \sum_{t\in \Gamma} \chi_{U_t}(x)\frac{\|f\chi_{U_t}\|_{p(\cdot)}}{\|\chi_{U_t}\|_{p(\cdot)}} .\]
If $\pp\in \partial LH_0^{\tau}(\Omega)$ for some $\tau\geq 1$, then $T_{p(\cdot)}:L^{p(\cdot)}(\Omega)\to L^{p(\cdot)}(\Omega)$ is bounded. 
\end{lemma}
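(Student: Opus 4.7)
The strategy is to reduce the norm bound to a modular estimate: by homogeneity, it suffices to show $\rho_\pp(T_\pp f) \leq C$ for every $f$ with $\|f\|_\pp = 1$. Using the bounded overlap of the tree covering $\{U_t\}$ (at most $C_1$ of the sets $U_t$ meet at any given point) together with the elementary inequality $\bigl(\sum_{j=1}^N a_j\bigr)^p \leq N^{p-1}\sum_{j=1}^N a_j^p$, one obtains
\[
\rho_\pp(T_\pp f) \leq C_1^{p_+}\sum_{t\in\Gamma}\int_{U_t} c_t^{p(x)}\dx, \qquad c_t := \frac{\|f\chi_{U_t}\|_\pp}{\|\chi_{U_t}\|_\pp}.
\]
The remaining task is to bound $\int_{U_t} c_t^{p(x)}\dx$ by a quantity whose sum over $t$ is controlled by $\rho_\pp(f) + |\Omega|$.

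The heart of the proof is a variable-exponent Jensen-type estimate on each Whitney cube, namely
\[
c_t^{p(x)} \leq C\left(\frac{\rho_\pp(f\chi_{U_t})}{|U_t|} + 1\right), \qquad x \in U_t,
\]
where $C$ depends on $n$, $p_\pm$ and the constant in \eqref{LHBUt}. To prove it, I first combine \eqref{LHBUt} with a direct computation of the Luxemburg norm to establish $\|\chi_{U_t}\|_\pp \approx |U_t|^{1/p_-(U_t)} \approx |U_t|^{1/p_+(U_t)}$, with constants depending only on the LH data. Then, replacing $|f|$ by $\bar f := |f|+1$ so that $\rho_\pp(\bar f\chi_{U_t}) \geq |U_t|$ holds automatically, the standard norm--modular inequalities on $U_t$ give
\[
\|\bar f\chi_{U_t}\|_\pp \leq \max\bigl(\rho_\pp(\bar f\chi_{U_t})^{1/p_-(U_t)},\, \rho_\pp(\bar f\chi_{U_t})^{1/p_+(U_t)}\bigr).
\]
Dividing by $\|\chi_{U_t}\|_\pp$, raising to the power $p(x)$, and splitting $p(x)/p_\pm(U_t) = 1 + (p(x)-p_\pm(U_t))/p_\pm(U_t)$, the leading factor produces exactly $\rho_\pp(\bar f\chi_{U_t})/|U_t|$, while the remainder is of the form $A^{(p(x)-p_\pm(U_t))/p_\pm(U_t)}$ with $A \leq C/|U_t|$ (using $\rho_\pp(f)\leq 1$); a second application of \eqref{LHBUt} shows that $|U_t|^{-(p(x)-p_\pm(U_t))/p_\pm(U_t)}$ is uniformly bounded, absorbing the remainder into the constant. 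Finally, the pointwise bound $\rho_\pp(\bar f\chi_{U_t}) \leq 2^{p_+}(\rho_\pp(f\chi_{U_t}) + |U_t|)$ completes the claim.

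Integrating this pointwise estimate over $U_t$ yields $\int_{U_t} c_t^{p(x)}\dx \leq C(\rho_\pp(f\chi_{U_t}) + |U_t|)$, and summing, with the bounded overlap used one last time, gives
\[
\sum_{t\in\Gamma}\int_{U_t}c_t^{p(x)}\dx \leq CC_1\bigl(\rho_\pp(f)+|\Omega|\bigr) \leq C'(1+|\Omega|),
\]
which finishes the proof. The main obstacle is precisely the Jensen-type step above: transferring from the variable-exponent average $c_t$ to the classical modular average $\rho_\pp(f\chi_{U_t})/|U_t|$ without losing summability demands tight control of the discrepancies among $p_-(U_t)$, $p_+(U_t)$ and $p(x)$ on each $U_t$, and this is exactly what the hypothesis $\pp \in \partial LH_0^\tau(\Omega)$ delivers through \eqref{LHBUt}. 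The rest of the argument depends only on $1 < p_- \leq p_+ < \infty$ and on the structural properties of the tree covering $\{U_t\}$.
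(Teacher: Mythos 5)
Your proof is correct and follows essentially the same route as the paper's: reduce to a modular estimate for $\|f\|_{\pp}=1$, use the finite overlap of the $U_t$, and on each cube convert the ratio $\|f\chi_{U_t}\|_{\pp}/\|\chi_{U_t}\|_{\pp}$ raised to the power $p(x)$ into $|U_t|^{-1}\int_{U_t}(|f(y)|+1)^{p(y)}\,dy$ plus a constant, with \eqref{LHBUt} absorbing the oscillation of the exponent before summing over $t$. The only difference is organizational: the paper splits into the cases where the ratio is larger or smaller than $1$ and invokes the norm--modular estimates of Corollary 2.23 of the variable Lebesgue space literature, whereas you add $1$ to $|f|$ and use the two-sided norm--modular inequalities together with $\|\chi_{U_t}\|_{\pp}\approx|U_t|^{1/p_{\pm}(U_t)}$; these are cosmetic variants of the same argument.
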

\begin{proof}
By Fatou's Lemma in the scale of variable Lebesgue spaces (see~\cite[Theorem~2.61]{CruzUribeFiorenza}), we may assume without loss of generality that $f\in L^\infty(\Omega)$, has compact support, and is non-negative. Moreover, a homogeneity argument allows us to assume $\|f\|_{L^p(\cdot)(\Omega)}=1$. Under these assumptions it is enough to find a constant $C$ such that 
\[\int_\Omega T_{p(\cdot)} f(x)^{p(x)} \dx \le C.\]

Since the sets $\{U_t\}_{t\in \Gamma}$ have finite overlap,  we have that
\[\int_\Omega T_{p(\cdot)} f(x)^{p(x)}\dx \le C\sum_{t\in \Gamma} \int_{U_t} \left[\frac{\|\chi_{U_t} f\|_{p(\cdot)}}{\|\chi_{U_t}\|_{p(\cdot)}}\right]^{p(x)} \dx. \]

{  First consider the case
\[\frac{\|\chi_{U_t} f\|_{p(\cdot)}}{\|\chi_{U_t}\|_{p(\cdot)}} \geq 1.\]
This implies $\|\chi_{U_t}\|_{p(\cdot)}\le\|\chi_{U_t} f\|_{p(\cdot)}\le \|f\|_{p(\cdot)} = 1$. Hence, \cite[Corollary 2.23]{CruzUribeFiorenza} yields 
\[ \|\chi_{U_t}\|_{p(\cdot)} \ge |U_t|^{\frac{1}{p_-(U_t)}} 
\quad \text{and} \quad 
\|\chi_{U_t} f\|_{p(\cdot)}^{p_+(U_t)}\le\int_{U_t}f(y)^{p(y)}\dy. \]
If we combine these estimates and \eqref{LHBUt}, we get
\begin{multline*}
\left[\frac{\|\chi_{U_t} f\|_{p(\cdot)}}{\|\chi_{U_t}\|_{p(\cdot)}}\right]^{p(x)} 
\le \frac{\|\chi_{U_t} f\|_{p(\cdot)}^{p_+(U_t)}}{\|\chi_{U_t}\|_{p(\cdot)}^{p_+(U_t)}} 
\le \int_{U_t} f(y)^{p(y)}\dy |U_t|^{-\frac{p_+(U_t)}{p_-(U_t)}}\\
\le C|U_t|^{-1}\int_{U_t} f(y)^{p(y)}\dy
\le C\left(1 + |U_t|^{-1}\int_{U_t}f(y)^{p(y)}\dy\right).
\end{multline*}
On the other hand, if 
\[\frac{\|\chi_{U_t} f\|_{p(\cdot)}}{\|\chi_{U_t}\|_{p(\cdot)}} \leq 1,\]
then it is immediate that the same inequality holds. Therefore, we have that
\begin{multline*}
    \int_\Omega T_{p(\cdot)} f(x)^{p(x)}\dx 
    \le C\sum_{t\in \Gamma} \int_{U_t} \left(1 + |U_t|^{-1}\int_{U_t}f(y)^{p(y)}\dy\right)\dx \\ 
    \le C\sum_{t\in \Gamma}\left(|U_t| + \int_{U_t}f(y)^{p(y)}\dy\right) 
    \le C(|\Omega| + 1).
\end{multline*} }
\end{proof}

{  In Definition~\ref{tauLHB} we generalized  log-H\"older continuity to balls close to the boundary.  Here, we generalize the $K_0$ condition in the same way.}

\begin{definition}
    Given $\pp \in \Pp(\Omega)$, we say that $p(\cdot)$ satisfies the $\partial K_0^\tau(\Omega)$ condition  for some $\tau\geq1$ if 
    \[\sup_{x\in\Omega} |B|^{-1}\|\chi_B\|_{p(\cdot)}\|\chi_B\|_{p'(\cdot)}<\infty,\]
where {  $B=B_{x,\tau}.$}
    Moreover, we say that $\pp$ satisfies the $\partial K_0(\O)$ condition if $\pp\in\partial K_0^\tau(\O)$ for every $\tau\geq 1$ {  with a constant independent of $\tau$.}
\end{definition}

{  Given a tree covering $\{U_t\}_{t\in\Gamma}$, we can define a similar condition,
\begin{equation} \label{eqn:tree-K0}
\sup_{t\in \Gamma} |U_t|^{-1}\|\chi_{U_t}\|_{p(\cdot)}\|\chi_{U_t}\|_{p'(\cdot)}<\infty.
\end{equation}
It follows from~\eqref{BxtauUt} that if $\pp \in \partial K_0^\tau(\Omega)$, then \eqref{eqn:tree-K0} holds.}  The $K_0$ condition is necessary and sufficient to show that averaging operators defined on balls in a domain are bounded; \eqref{eqn:tree-K0} is the same characterization for averaging operators defined on the $U_t$.

\begin{lemma}\label{lemma:ABequivalence}
Given a ball $B$, define the operator $A_{B}$  by
\[A_{B}f(y) = \frac{\chi_{B}(y)}{|B|}\int_{B}f(z)\dz.\]
For any $\pp \in \Pp(\Omega)$ and $\tau \geq 1$,  $p(\cdot)\in\partial K_0^\tau(\Omega)$ if and only if the operators $A_B:L^{p(\cdot)}(\Omega)\to L^{p(\cdot)}(\Omega)$ are uniformly bounded  for every $B=B_{x,\tau}$, {  $x\in \Omega$.}  {  Similarly, given a tree-covering $\{U_t\}_{t\in \Gamma}$, if we define the operators
\[A_{U_t}f(y) = \frac{\chi_{U_t}(y)}{|U_t|}\int_{U_t}f(z)\dz,\]
then the $A_{U_t}$, $t\in \Gamma$, are uniformly bounded if and only \eqref{eqn:tree-K0} holds. }
\end{lemma}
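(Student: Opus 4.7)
The plan is to prove both directions using the two basic dualities available in variable Lebesgue spaces: H\"older's inequality~\eqref{eqn:holder} and the associate norm representation~\eqref{eqn:assoc-norm}. The key observation is that because the averaging operator is a scalar multiplying $\chi_B$, its $L^{p(\cdot)}$ norm factors explicitly: for any $f$,
\[
\|A_B f\|_{p(\cdot)} = \frac{\|\chi_B\|_{p(\cdot)}}{|B|}\int_B |f(z)|\,dz.
\]
This reduces the question to comparing $\int_B |f|$ with $\|f\|_{p(\cdot)}$, which is precisely what the associate norm controls.

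For the sufficiency direction, I would assume $\pp \in \partial K_0^\tau(\Omega)$ and estimate $\int_B |f|\,dz$ via H\"older's inequality~\eqref{eqn:holder} by $C\|f\chi_B\|_{p(\cdot)}\|\chi_B\|_{p'(\cdot)}$. Substituting into the formula above,
\[
\|A_B f\|_{p(\cdot)} \le \frac{C\,\|\chi_B\|_{p(\cdot)}\|\chi_B\|_{p'(\cdot)}}{|B|}\,\|f\|_{p(\cdot)},
\]
and the $\partial K_0^\tau(\Omega)$ hypothesis gives a uniform bound on the constant over all $B = B_{x,\tau}$, $x\in\Omega$.

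For the necessity direction, I would use the associate norm~\eqref{eqn:assoc-norm} applied to $\chi_B$: there exists a non-negative $f_0 \in L^{p(\cdot)}(\Omega)$ with $\|f_0\|_{p(\cdot)} \le 1$ such that
\[
\int_B f_0(z)\,dz = \int_\Omega f_0 \chi_B \,dz \ge c\,\|\chi_B\|_{p'(\cdot)},
\]
where $c$ depends only on the equivalence constant in~\eqref{eqn:assoc-norm}. Plugging this $f_0$ into the explicit formula for $A_B f_0$ yields
\[
\|A_B f_0\|_{p(\cdot)} \ge \frac{c\,\|\chi_B\|_{p(\cdot)}\|\chi_B\|_{p'(\cdot)}}{|B|}.
\]
The assumed uniform bound $\|A_B f_0\|_{p(\cdot)} \le C\|f_0\|_{p(\cdot)} \le C$ then forces $|B|^{-1}\|\chi_B\|_{p(\cdot)}\|\chi_B\|_{p'(\cdot)} \le C/c$ uniformly in $x \in \Omega$, which is exactly the $\partial K_0^\tau(\Omega)$ condition.

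The tree-covering version is proved by the same argument verbatim, replacing each occurrence of $B$ by $U_t$ and noting that the formula $\|A_{U_t} f\|_{p(\cdot)} = |U_t|^{-1}\|\chi_{U_t}\|_{p(\cdot)}\int_{U_t}|f|$ holds identically. There is no real obstacle here: the proof is a short, direct duality argument, and the only mildly delicate point is keeping track of the equivalence constants in~\eqref{eqn:assoc-norm} and~\eqref{eqn:holder} to ensure the bound is truly uniform in $B$ (respectively, $t$), which is automatic because those constants depend only on $\pp$ and not on the particular set.
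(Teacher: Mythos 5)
Your proof is correct, and it is essentially the paper's argument: the paper simply cites \cite[Proposition 4.47]{CruzUribeFiorenza} (with $Q_0$ replaced by $B_{x,\tau}$ or $U_t$), and the proof of that proposition is exactly your combination of H\"older's inequality \eqref{eqn:holder} for sufficiency and the associate norm \eqref{eqn:assoc-norm} applied to $\chi_B$ for necessity, with constants depending only on $\pp$ and hence uniform in $B$ and $t$. The only cosmetic point is that your displayed identity for $\|A_Bf\|_{\pp}$ is an equality only for $f\geq 0$ (in general one has $\leq$), which is all that is needed in either direction.
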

\begin{proof}
The proof is identical to the proof of \cite[Proposition 4.47]{CruzUribeFiorenza}, {  replacing $Q_0$ with $B_{x,\tau}$ or $U_t$.}
\end{proof}

\begin{lemma}\label{lemma:LogHolder-AB}
Given $\pp \in \Pp(\Omega)$ and $\tau \geq 1$, if $p(\cdot)\in\partial LH_0^\tau(\Omega)$, then  $A_B:L^{p(\cdot)}(\Omega)\to L^{p(\cdot)}(\Omega)$ are uniformly bounded for every $B=B_{x,\tau}$, {  $x\in \Omega$.}  {  Similarly, given a tree-covering $\{U_t\}_{t\in \Gamma}$, the operators $A_{U_t}$, $t\in \Gamma$, are uniformly bounded.}
\end{lemma}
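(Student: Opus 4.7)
The plan is to combine Lemma \ref{lemma:ABequivalence} with a direct estimate showing that $\pp$ satisfies the $\partial K_0^\tau(\Omega)$ condition. By that lemma, uniform boundedness of the operators $A_B$ (respectively $A_{U_t}$) is equivalent to uniformly bounding
\[|B|^{-1}\|\chi_B\|_\pp\|\chi_B\|_\cpp, \qquad B = B_{x,\tau},\ x\in\Omega,\]
(respectively the analogous quantity over $t\in \Gamma$), so the whole argument reduces to bounding this single supremum. In particular, once the ball version is settled, the tree version follows by applying the identical reasoning to the cube $U_t$, using \eqref{LHBUt} (which is the log-H\"older estimate already transferred from $B(x_t,\tau d(x_t))$ to $U_t$ via \eqref{BxtauUt}) in place of the $\partial LH_0^\tau$ condition.

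The core computation is to estimate both norms of $\chi_B$ for balls with $\tau d(x) \le 1/2$. For such balls $|B|\le 1$, so by the standard sharp bound (cf.\ \cite[Corollary 2.23]{CruzUribeFiorenza})
\[\|\chi_B\|_\pp \le |B|^{1/p_+(B)},\qquad \|\chi_B\|_\cpp \le |B|^{1/(p_-(B))'},\]
where the second estimate uses $p'_+(B) = (p_-(B))'$. Multiplying these and dividing by $|B|$ collapses the product to
\[|B|^{-1}\|\chi_B\|_\pp \|\chi_B\|_\cpp \le |B|^{-(p_+(B)-p_-(B))/(p_+(B)p_-(B))}.\]
Since $|B|\sim (\tau d(x))^n$, taking logarithms converts this into $(p_+(B)-p_-(B))\cdot \bigl(n/(p_-p_+)\bigr)\cdot(-\log(\tau d(x)))$, and the $\partial LH_0^\tau(\Omega)$ bound $p_+(B)-p_-(B)\le C_0/(-\log(\tau d(x)))$ yields a constant depending only on $C_0$, $n$ and $p_-$.

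The complementary case $\tau d(x)>1/2$ is straightforward: since $\Omega$ is bounded and $\tau$ is fixed, $|B\cap \Omega|$ is bounded above and below by constants, so both norms are controlled using only $1<p_-\le p_+<\infty$. The main subtlety I expect in writing this out cleanly is bookkeeping for the dual exponent — correctly invoking $p'_+(B) = (p_-(B))'$ and tracking the sign of $\log|B|$ so that the exponent in the product really collapses to $-(p_+(B)-p_-(B))/(p_+(B)p_-(B))$ with the right sign. Once those relations are aligned, the boundary log-H\"older condition cancels exactly the logarithmic blow-up, and the proof is essentially immediate.
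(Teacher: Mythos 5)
Your proof is correct, but it runs in the opposite logical direction from the paper's. The paper proves the boundedness of $A_B$ directly via a modular estimate: normalizing $\|f\|_\pp=1$, it uses the equivalence in Lemma~\ref{lemma:LHBequiv} to replace $|B|^{-p(y)}$ by $C|B|^{-p_-(B)}$, shows the inner integral $\int_B(f+1)^{p(z)/p_-(B)}\,dz$ is uniformly bounded so the outer exponent can be lowered to $p_-(B)$, and finishes with the classical H\"older inequality with exponent $p_-(B)$; the $\partial K_0^\tau$ condition (Corollary~\ref{corollary:LH0impliesK0}) is then deduced \emph{from} Lemmas~\ref{lemma:ABequivalence} and~\ref{lemma:LogHolder-AB}. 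You instead verify $\partial K_0^\tau$ first, by the sharp characteristic-function estimates $\|\chi_B\|_\pp\le|B|^{1/p_+(B)}$, $\|\chi_B\|_\cpp\le|B|^{1/(p_-(B))'}$ for $|B|\le1$ (valid for $E=B\cap\Omega$ since $|B\cap\Omega|\le|B|\le1$ and the exponents are nonnegative), so that the product collapses to $|B|^{-(p_+(B)-p_-(B))/(p_+(B)p_-(B))}$, which the boundary log-H\"older bound controls after taking logarithms; you then invoke Lemma~\ref{lemma:ABequivalence} (and its cube analogue via \eqref{eqn:tree-K0} and \eqref{LHBUt}) to conclude. This is legitimate and non-circular, since Lemma~\ref{lemma:ABequivalence} is proved independently (by the argument of \cite[Proposition 4.47]{CruzUribeFiorenza}), and the direction you need — $\partial K_0^\tau$ implies uniform boundedness — is just H\"older's inequality with the associate norm. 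What your route buys is brevity and modularity: everything reduces to one supremum over characteristic functions, and Corollary~\ref{corollary:LH0impliesK0} comes for free along the way. What the paper's route buys is self-containedness — it avoids relying on the $K_0$ equivalence and the norm estimates of \cite[Corollary 2.23]{CruzUribeFiorenza} at this point, and its exponent-lowering modular technique is the same template reused in Theorem~\ref{thm: continuity of A} and Lemmas~\ref{lemma:Tsbounded} and~\ref{lemma:Talphabounded}. Do note (as the paper's own proof also implicitly does, via the factor $2^{p_+(\Omega)}$) that your interior case $\tau d(x)>1/2$ uses $p_+<\infty$ and the boundedness of $\Omega$, and that the lower bound $|B\cap\Omega|\ge c(n,\tau)>0$ there follows from $B(x,1/(2\tau))\subset\Omega$.
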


\begin{proof}
We prove this for the operators $A_B$; the proof for $A_{U_t}$ is the same, using~\eqref{LHBUt}.
Without loss of generality we can assume that $f$ is non-negative, and by a homogeneity argument we can assume that $\|f\|_{p(\cdot)}=1$.  Then in this case it is enough to show that
\[\int_\Omega |A_B f(y)|^{p(y)}\dy\le C.\]
It follows from the $\partial LH_0^\tau(\Omega)$ condition that
\begin{align*}
    \int_\Omega |A_B f(y)|^{p(y)}\dy &= \int_B \left(\frac{1}{|B|}\int_B f(x) \dz\right)^{p(y)}\dy \\
    &\le \int_B |B|^{-p(y)}\left(\int_B (f(z)+1)\dz\right)^{p(y)}\dy \\
    &\le C\int_B|B|^{-p_-(B)}\left(\int_B (f(z)+1)^{\frac{p(z)}{p_-(B)}}\dz\right)^{p(y)}\dy = I.
\end{align*}
Since $\|f\|_{p(\cdot)}=1$, we have that
\begin{align*}
    \int_B (f(z)+1)^{\frac{p(z)}{p_-(B)}}\dz &\le \int_B (f(z)+1)^{p(z)} \\
    &\le 2^{p_+(\Omega)} \int_B f(z)^{p(z)}\dz + 2^{p_+(\Omega)}|B| \\
    &\le 2^{p_+(\Omega)}(1+|\Omega|); 
\end{align*}
hence, the integral is uniformly bounded above.  Therefore, we can lower the exponent $p(y)$ to $p_-(B)$. Taking this into account and applying the classical H\"older inequality with exponent $p_-(B)$, we obtain
\begin{multline*}
    I 
    \le C \int_B|B|^{-p_-(B)}\left(\int_B (f(z)+1)^{\frac{p(z)}{p_-(B)}}\dz\right)^{p_-(B)}\dy \\
    \le C |B|^{1-p_-(B)}\int_B(f(z)+1)^{p(z)}\dz |B|^{\frac{p_-(B)}{p'_-(B)}} 
    \le C(p(\cdot),\Omega).
\end{multline*}
This concludes the proof.
\end{proof}

\begin{corollary}\label{corollary:LH0impliesK0}
If $p(\cdot)\in \partial LH^\tau_0(\Omega)$, then $p(\cdot)\in\partial K^\tau_0(\Omega)$.
\end{corollary}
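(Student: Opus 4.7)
The plan is to observe that Corollary \ref{corollary:LH0impliesK0} is an immediate consequence of the two preceding lemmas, with no additional work required. Specifically, I would chain together Lemma \ref{lemma:LogHolder-AB} and Lemma \ref{lemma:ABequivalence} as follows.

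First, I would invoke Lemma \ref{lemma:LogHolder-AB}: the hypothesis $\pp \in \partial LH_0^\tau(\Omega)$ directly yields that the averaging operators $A_B : L^\pp(\Omega) \to L^\pp(\Omega)$ are uniformly bounded over all balls of the form $B = B_{x,\tau}$ with $x \in \Omega$. Next, I would apply Lemma \ref{lemma:ABequivalence} in the reverse direction: the uniform boundedness of this family of averaging operators is equivalent to the $\partial K_0^\tau(\Omega)$ condition
\[
\sup_{x\in\Omega} |B_{x,\tau}|^{-1}\|\chi_{B_{x,\tau}}\|_{p(\cdot)}\|\chi_{B_{x,\tau}}\|_{p'(\cdot)}<\infty.
\]
Combining these two implications closes the argument.

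Since both ingredients have been proved in the lines immediately preceding the corollary, there is no genuine obstacle. The only thing worth verifying is the compatibility of the parameters: both lemmas are stated for exactly the same family $\{B_{x,\tau}\}_{x\in\Omega}$ with the same $\tau \geq 1$, so the implications line up without any adjustment of constants. Thus the proof can be written in essentially one line, and in the paper it should appear as a two-sentence corollary rather than a substantive argument.
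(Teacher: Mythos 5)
Your proposal is correct and coincides with the paper's own argument: the paper also derives Corollary~\ref{corollary:LH0impliesK0} immediately from Lemma~\ref{lemma:LogHolder-AB} together with the equivalence in Lemma~\ref{lemma:ABequivalence}, with no further work. Your remark about the matching family $\{B_{x,\tau}\}_{x\in\Omega}$ and the same $\tau$ is exactly the compatibility the paper relies on.
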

\begin{proof} This follows immediately from Lemma \ref{lemma:ABequivalence} and Lemma \ref{lemma:LogHolder-AB}.
\end{proof}

\medskip

{  The following property is again immediate in the constant exponent case.  In variable Lebesgue spaces, or more generally in Banach function spaces, this property, when applied to an arbitrary collection of sets with bounded overlap, is sometimes referred to as Property~G.  See~\cite[Section~7.3]{DieningBook} for details and references.}

 \begin{lemma}\label{lemma:Höldersum}
 Let $\Omega\subset \R^n$ a bounded domain and $\{U_t\}_{t\in\Gamma}$ a tree-covering, like the one given by Remark \ref{remark:tree-covering}. If $\pp \in \Pp(\Omega)$ satisfies $p(\cdot)\in\partial LH_0^{\tau}(\Omega)$ for some $\tau\ge 1$, then  for every $f\in L^{\pp}(\O)$ and  $g\in L^{\cpp}(\O)$ we have that
\begin{equation}\label{holdersum}
\sum_{t \in \Gamma} \|\chi_{U_t} f\|_{p(\cdot)} \|\chi_{U_t} g\|_{p'(\cdot)} \le C \|f\|_{p(\cdot)}\|g\|_{p'(\cdot)}.
\end{equation}
 \end{lemma}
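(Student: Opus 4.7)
The plan is to realize the left-hand side as an integral of a product and then apply variable-exponent H\"older's inequality together with the boundedness of the Hardy-type operator $T_\pp$ from Lemma~\ref{lemma:Tsbounded}.

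First, I would combine Corollary~\ref{corollary:LH0impliesK0} with the tree version~\eqref{eqn:tree-K0} of the $K_0$ condition to obtain the key size estimate $\|\chi_{U_t}\|_{p(\cdot)}\|\chi_{U_t}\|_{p'(\cdot)} \le C|U_t|$ for every $t \in \Gamma$. Inserting the trivial identity $\|\chi_{U_t}\|_\pp^{-1}\|\chi_{U_t}\|_\cpp^{-1}\|\chi_{U_t}\|_\pp\|\chi_{U_t}\|_\cpp = 1$ in each summand and applying this bound gives
\begin{equation*}
\sum_{t\in\Gamma} \|\chi_{U_t} f\|_\pp \|\chi_{U_t} g\|_\cpp \le C\sum_{t\in\Gamma} |U_t| \frac{\|\chi_{U_t} f\|_\pp}{\|\chi_{U_t}\|_\pp}\cdot \frac{\|\chi_{U_t} g\|_\cpp}{\|\chi_{U_t}\|_\cpp}.
\end{equation*}

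Next, I would rewrite each factor $|U_t|$ as $\int_\Omega \chi_{U_t}(x)\,dx$ and bring the sum inside the integral. Since the $\chi_{U_t}$ are characteristic functions ($\chi_{U_t}^2=\chi_{U_t}$) and all the quantities are non-negative, the pointwise inequality
\begin{equation*}
\sum_{t\in\Gamma} \chi_{U_t}(x) a_t b_t \le \Bigl(\sum_{t\in\Gamma}\chi_{U_t}(x) a_t\Bigr)\Bigl(\sum_{t\in\Gamma}\chi_{U_t}(x) b_t\Bigr),
\end{equation*}
applied with $a_t=\|\chi_{U_t} f\|_\pp/\|\chi_{U_t}\|_\pp$ and $b_t=\|\chi_{U_t} g\|_\cpp/\|\chi_{U_t}\|_\cpp$, recognises the two factors as $T_\pp f(x)$ and $T_\cpp g(x)$:
\begin{equation*}
\sum_{t\in\Gamma}\|\chi_{U_t} f\|_\pp \|\chi_{U_t} g\|_\cpp \le C \int_\Omega T_\pp f(x) \, T_\cpp g(x)\, dx.
\end{equation*}

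Finally, I would apply H\"older's inequality~\eqref{eqn:holder} on $L^\pp(\Omega)$ to bound the integral by $C\|T_\pp f\|_\pp \|T_\cpp g\|_\cpp$. Lemma~\ref{lemma:Tsbounded} gives $\|T_\pp f\|_\pp \le C\|f\|_\pp$, and to bound $\|T_\cpp g\|_\cpp$ I would invoke Lemma~\ref{lemma:dual-LHB} (using $p_->1$, which is the standing assumption in Lemma~\ref{lemma:Tsbounded}) to conclude that $\cpp \in \partial LH_0^\tau(\Omega)$, so that Lemma~\ref{lemma:Tsbounded} applied to $\cpp$ yields $\|T_\cpp g\|_\cpp \le C\|g\|_\cpp$. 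Combining these estimates gives the claim.

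The main obstacle I anticipate is the dual boundedness of $T_\cpp$: one must verify that the hypotheses of Lemma~\ref{lemma:dual-LHB} are met, which requires the standing lower bound $p_->1$ so that $p_-(B_{x,\tau})\ge p_->1$ uniformly. Once this is in place, the rest of the argument is a direct manipulation of sums and integrals, reducing the lemma to two applications of the boundedness of $T_{(\cdot)}$ via H\"older's inequality.
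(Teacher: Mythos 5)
Your proposal is correct and follows essentially the same route as the paper's proof: the bound $\|\chi_{U_t}\|_{\pp}\|\chi_{U_t}\|_{\cpp}\le C|U_t|$ from the tree form \eqref{eqn:tree-K0} of the $\partial K_0^\tau$ condition, the pointwise identification of the resulting sum with $T_\pp f(x)\,T_\cpp g(x)$ via finite overlap, H\"older's inequality \eqref{eqn:holder}, and two applications of Lemma~\ref{lemma:Tsbounded}. Your remark about needing $p_->1$ (via Lemma~\ref{lemma:dual-LHB}) to run the argument for $T_\cpp$ is a point the paper's proof uses implicitly without comment, so flagging it is appropriate rather than a deviation.
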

\begin{proof}
Using \eqref{eqn:tree-K0} we have that
   \begin{equation*}
   \sum_{t \in \Gamma} \|\chi_{U_t} f\|_{p(\cdot)} \|\chi_{U_t} g\|_{p'(\cdot)} 
   \le C\sum_{t \in \Gamma} |U_t| \frac{\|\chi_{U_t} f\|_{p(\cdot)}}{\|\chi_{U_t}\|_\pp} 
   \frac{\|\chi_{U_t} g\|_{p'(\cdot)}}{\|\chi_{U_t}\|_\cpp } =: I.
   \end{equation*}
   Since the sets $\{U_t\}_{t\in \Gamma}$ have finite overlap, we have that 
  {  \[  \sum_{t \in \Gamma} \chi_{U_t}(x) \frac{\|\chi_{U_t} f\|_{p(\cdot)}}{\|\chi_{U_t}\|_\pp} \frac{\|\chi_{U_t} g\|_{p'(\cdot)}}{\|\chi_{U_t}\|_\cpp } \leq CT_\pp f(x) T_\cpp g(x).  \] }
  If we integrate this estimate over $\Omega$, apply   H\"older's inequality \eqref{eqn:holder}, and then apply Lemma~\ref{lemma:Tsbounded}, we get
    \begin{multline*}
        I 
        \le C \int_{\Omega} T_\pp f(x) T_\cpp g(x) \dx \\
          \le C \|\chi_{\Omega} T_\pp f\|_\pp\|\chi_{\Omega}T_\cpp g\|_\cpp 
          \le C \|\chi_{\Omega}f\|_\pp\|\chi_{\Omega}g\|_\cpp.
    \end{multline*}
    This completes the proof.
\end{proof}

\medskip 

\begin{remark} In Lemma~\ref{lemma:Höldersum} we proved \eqref{holdersum} using the continuity of the operator $T_\pp$, which holds thanks to the hypothesis $\pp\in \partial LH_0^{\tau}(\Omega)$ for some $\tau\geq 1$. It is interesting to notice that the reverse implication is also true. For if~\eqref{holdersum} holds, then by \eqref{eqn:assoc-norm},  H\"older's inequality \eqref{eqn:holder}, and \eqref{holdersum}, we get 
\begin{align*}
   \|T_\pp f \|_{p(\cdot)} &\le C\sup_{g:\|g\|_{p'(\cdot)}\le 1} \int_\Omega \sum_{t \in \Gamma} \chi_{U_t}(x) \frac{\|\chi_{U_t} f\|_{p(\cdot)}} {\|\chi_{U_t}\|_{p(\cdot)}} g(x)\,\dx \\
    &\le C \sup_{g:\|g\|_{p'(\cdot)}\le 1}\sum_{t \in \Gamma} \int_{U_t} \frac{\|\chi_{U_t} f\|_{p(\cdot)}} {\|\chi_{U_t}\|_{p(\cdot)}} |g(x)|\,\dx \\
    &\le C \sup_{g:\|g\|_{p'(\cdot)}\le 1}\sum_{t \in \Gamma} \|\chi_{U_t} f\|_{p(\cdot)} \|\chi_{U_t} g\|_{p'(\cdot)} \\
    &\le C \sup_{g:\|g\|_{p'(\cdot)}\le 1} \|f\|_{p(\cdot)} \|g\|_{p'(\cdot)} \\
    &\le C \|f\|_{p(\cdot)}.
\end{align*}
\end{remark}

\medskip

Finally we prove the following lemma, which resembles~\cite[Theorem~4.5.7]{DieningBook}.

\begin{lemma}\label{corollary:measUt-meanp}
   Given $\pp \in \Pp(\Omega)$, suppose $1<p_-\leq p_+<\infty$ and $\pp\in \partial LH_0^\tau(\O)$.  If $\{U_t\}_{t\in\Gamma}$ is a tree-covering of $\Omega$, like the one given by Remark \ref{remark:tree-covering}, and \[\frac{1}{p_{U_t}}=\frac{1}{|U_t|}\int_{U_t}\frac{1}{p(x)}\dx,\] then for every $t\in \Gamma$,
    \begin{equation} \label{eqn:mm1}
\|\chi_{U_t}\|_\pp \sim |U_t|^\frac{1}{p_{U_t}}.
\end{equation}
    where the implicit constants are independent of $t\in \Gamma$.
\end{lemma}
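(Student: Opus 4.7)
The plan is to work directly with the modular. Recall that since $p_+<\infty$, for any measurable $E\subset\Omega$ with $|E|>0$ we have the characterization $\|\chi_E\|_\pp=\lambda$ if and only if $\int_E \lambda^{-p(x)}\,dx=1$ (see \cite[Proposition~2.21]{CruzUribeFiorenza}). Setting $\mu_t:=|U_t|^{1/p_{U_t}}$, it therefore suffices to show that
\begin{equation*}
c_1 \le \int_{U_t}\mu_t^{-p(x)}\,dx \le c_2,
\end{equation*}
with constants depending only on $p_-$, $p_+$, $|\Omega|$, and the $\partial LH_0^\tau$ constant; once this is established, a standard modular rescaling argument yields $\|\chi_{U_t}\|_\pp \sim \mu_t$.

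The key pointwise estimate to prove is
\begin{equation*}
|U_t|^{-p(x)/p_{U_t}} \sim |U_t|^{-1}, \qquad x\in U_t,
\end{equation*}
with constants independent of $t$. Writing $|U_t|^{-p(x)/p_{U_t}} = |U_t|^{-1}\,|U_t|^{(p_{U_t}-p(x))/p_{U_t}}$, we note that since $p_-(U_t)\le p_{U_t}\le p_+(U_t)$ and $p_{U_t}\ge p_-\ge 1$, the exponent satisfies
\begin{equation*}
\bigl|(p_{U_t}-p(x))/p_{U_t}\bigr| \le p_+(U_t)-p_-(U_t).
\end{equation*}
By Lemma~\ref{lemma:LHBequiv} applied through \eqref{LHBUt}, the $\partial LH_0^\tau(\Omega)$ hypothesis yields $|U_t|^{-(p_+(U_t)-p_-(U_t))}\le C$ when $|U_t|\le 1$; when $|U_t|>1$, the same quantity is $\le 1$ and bounded below by $|\Omega|^{-(p_+-p_-)}$. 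In either regime we conclude that $|U_t|^{\pm(p_+(U_t)-p_-(U_t))}\sim 1$, hence $|U_t|^{(p_{U_t}-p(x))/p_{U_t}}\sim 1$ uniformly in $x\in U_t$. Integrating over $U_t$ then gives $\int_{U_t}\mu_t^{-p(x)}\,dx \sim 1$.

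To convert the bounds $c_1 \le \int_{U_t}\mu_t^{-p(x)}\,dx \le c_2$ into the norm comparison, I would rescale: since $p_-\ge 1$, taking $c=c_2^{1/p_-}\ge 1$ gives $\int_{U_t}(c\mu_t)^{-p(x)}\,dx \le c^{-p_-}c_2 = 1$, so $\|\chi_{U_t}\|_\pp \le c_2^{1/p_-}\mu_t$; taking $c=c_1^{1/p_-}\le 1$ gives $\int_{U_t}(c\mu_t)^{-p(x)}\,dx \ge c^{-p_-}c_1 = 1$, so $\|\chi_{U_t}\|_\pp \ge c_1^{1/p_-}\mu_t$. Combining the two inequalities proves \eqref{eqn:mm1}.

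The only subtle step is the uniform comparability $|U_t|^{(p_{U_t}-p(x))/p_{U_t}}\sim 1$: this is precisely where the $\partial LH_0^\tau(\Omega)$ hypothesis is used, and requires that one treats the small cubes (where the logarithmic control at the boundary is essential) and the finitely many large cubes (where crude bounds in terms of $|\Omega|$ and $p_+-p_-$ suffice) separately. Everything else is a routine computation with the Luxemburg modular.
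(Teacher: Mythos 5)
Your proof is correct, but it follows a genuinely different route from the paper. You estimate the Luxemburg norm directly through the modular: writing $\mu_t=|U_t|^{1/p_{U_t}}$, you use $p_-(U_t)\le p_{U_t}\le p_+(U_t)$ together with the cube form of the boundary log-H\"older condition, $|U_t|^{-(p_+(U_t)-p_-(U_t))}\le C$ (which is exactly what \eqref{LHBUt} gives after taking the essential supremum, as in Lemma~\ref{lemma:LHBequiv}), to get $\int_{U_t}\mu_t^{-p(x)}\,dx\sim 1$, and then a standard rescaling converts the two-sided modular bound into the two-sided norm bound; all steps check out, including the handling of the finitely many cubes with $|U_t|>1$. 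The paper instead splits the two inequalities: the bound $|U_t|^{1/p_{U_t}}\le 2\|\chi_{U_t}\|_\pp$ is quoted from an earlier paper (valid for any exponent with $1<p_-\le p_+<\infty$, no log-H\"older hypothesis), and the reverse bound is obtained by duality via \eqref{eqn:assoc-norm}, using that $\cpp\in\partial LH_0^\tau$ (Lemma~\ref{lemma:dual-LHB}) so that the averaging operators $A_{U_t}$ are uniformly bounded on $L^\cpp$ (Lemma~\ref{lemma:LogHolder-AB}). Your argument is more elementary and self-contained: it never touches the dual exponent, duality, or the averaging operators, and in fact only uses $p_+<\infty$ (so it would even survive $p_-=1$). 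What the paper's route buys is that one direction holds with no regularity assumption at all and that it recycles machinery ($A_{U_t}$ bounds) already established for other purposes; for this particular lemma your computation is the shorter path.
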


\begin{proof}
Recall that as in Remark~\ref{remark:tree-covering} each set $U_t$ is cube.  
The inequality
\begin{equation} \label{eqn:measure-norm1}
|U_t|^\frac{1}{p_{U_t}} \le 2 \|\chi_{U_t}\|_\pp
\end{equation}
was proved in \cite[Lemma 3.1]{Cruz_Uribe_2016} for every $\pp\in\Pp(\Omega)$ with $1<p_-\le p_+<\infty$ and for every cube $Q\subset \Omega$. 

To prove the reverse inequality, we apply a duality argument. By \eqref{eqn:assoc-norm}, there exists  $g\in L^\cpp(\O)$, $\|g\|_\cpp=1$, such that
\begin{align*}
\frac{\|\chi_{U_t}\|_\pp}{|{U_t}|^\frac{1}{p_{U_t}}} 
&\le C \frac{1}{|{U_t}|^\frac{1}{p_{U_t}}}\int_\O \chi_{U_t}(x) g(x)\dx \\
&= C|{U_t}|^\frac{1}{p'_{U_t}}\frac{1}{|{U_t}|}\int_{U_t} g(x)\dx. \\
\intertext{But by inequality~\eqref{eqn:measure-norm1} we have that}
&\le C\|\chi_{U_t}\|_\cpp \frac{1}{|{U_t}|}\int_{U_t} g(x)\dx \\
&= C\|A_{U_t} g\|_\cpp \\
& \leq C \|g\|_\cpp \\
& = C.
\end{align*}
Here $A_{U_t}$ is the operator defined in Lemma \ref{lemma:ABequivalence}.  The last inequality follows, since by Lemma~\ref{lemma:dual-LHB}, $\cpp\in \partial LH_0^\tau(\O)$, and so the $A_{U_t}$ are uniformly bounded on $L^\cpp(\Omega)$.
\end{proof}

\medskip

{  
In order to prove Sobolev-Poincaré inequalities, we will need off-diagonal versions of some of the previous results. We will state these results in more generality than we need to prove the results in Section~\ref{section:improved-SPi}.  We do so partly because of the intrinsic interest of these results, but also to highlight where the  more restrictive hypotheses are needed.  

One natural set of assumptions would be to fix a value $\alpha$, $0<\alpha<n$, let $\pp \in \Pp(\Omega)$ satisfy $1\leq p_-\leq p_+<n/\alpha$, and define $\qq \in \Pp(\Omega)$ pointwise by the identity
\begin{equation} \label{eqn:qq-defn}
 \frac{1}{p(x)} - \frac{1}{q(x)} = \frac{\alpha}{n}.  
 \end{equation}
%
%Note that with this assumption, if we argue as we did in the proof of Lemma~\ref{lemma:dual-LHB}, we immediately have that if $\pp \in \tauLH(\Omega)$ for some $\tau\geq 1$, then $\qq\in \tauLH(\Omega)$ as well.

However, we would like to avoid the restriction that $p_+<n/\alpha$.  To do so, we will assume that $\pp, \qq \in \P(\Omega)$ satisfy the following:
\begin{equation} \label{eqn:pq-assump1}
1\leq p_-\leq p_+<\infty, \qquad 1 \leq q_- \leq q_+ < \infty, 
\end{equation}
and that there exists $0<\beta<\alpha$ such that
\begin{equation} \label{eqn:pq-assump2}
\frac{\beta}{n} = \frac{1}{p(x)} - \frac{1}{q(x)} \leq \frac{\alpha}{n}. 
 \end{equation}
\begin{remark} \label{remark:pp-qq-LH}
    If we argue as we did in the proof of Lemma~\ref{lemma:dual-LHB}, if $\pp \in \tauLH(\Omega)$ for some $\tau\geq 1$, then \eqref{eqn:pq-assump1} and the first inequality in \eqref{eqn:pq-assump2} imply that $\qq \in \tauLH(\Omega)$.
\end{remark}

\begin{remark}
    Given any $\pp \in \Pp(\Omega)$ such that $p_+<\infty$, and given any $0<\alpha<n$, there exists $\qq\in \Pp(\Omega)$ and $0<\beta<\alpha$ such that~\eqref{eqn:pq-assump2} holds.   If $p_+<n/\alpha$, let $\beta=\alpha$ and use the first inequality in~\eqref{eqn:pq-assump2} to define $\qq$.  If $n/\alpha \leq p_+<\infty$, fix $0<\beta<\alpha $ such that $p_+<n/\beta$ and again define $\qq$ using~\eqref{eqn:pq-assump2}.
\end{remark}
}

{  
We first consider an off-diagonal operator $T_\pp^\alpha$, similar to the one defined in Lemma~\ref{lemma:Tsbounded}.
}

{  
\begin{lemma}\label{lemma:Talphabounded}
Let $\O\subset \R^n$ a bounded domain with a tree-covering $\{U_t\}_{t\in\Gamma}$, like the one given by Remark~\ref{remark:tree-covering}.  Fix $\alpha$, $0<\alpha<n$, and $\pp,\, \qq  \in \Pp(\Omega)$ that satisfy \eqref{eqn:pq-assump1} and~\eqref{eqn:pq-assump2} for some $0<\beta<\alpha$.  Define the operator 
\[T^\alpha_\pp f(x) = \sum_{t\in \Gamma} |U_t|^\frac{\alpha}{n} \frac{\|\chi_{U_t} f\|_\pp}{\|\chi_{U_t}\|_\pp}\chi_{U_t}(x).\]

If $\pp \in \tauLH(\Omega)$ for some $\tau\geq 1$, then $T^\alpha_\pp:L^\pp(\O)\to L^\qq(\O)$ is bounded.
\end{lemma}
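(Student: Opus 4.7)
The plan is to mirror the proof of Lemma~\ref{lemma:Tsbounded}, extending the modular estimate to the off-diagonal setting. First, by Remark~\ref{remark:pp-qq-LH}, the hypothesis $\pp\in\tauLH(\Omega)$ together with~\eqref{eqn:pq-assump2} implies $\qq\in\tauLH(\Omega)$, so Lemma~\ref{corollary:measUt-meanp} applies to both exponents and yields $\|\chi_{U_t}\|_\pp\sim|U_t|^{1/p_{U_t}}$ and $\|\chi_{U_t}\|_\qq\sim|U_t|^{1/q_{U_t}}$. Averaging~\eqref{eqn:pq-assump2} over $U_t$ gives the reciprocal identity $\frac{1}{p_{U_t}}-\frac{1}{q_{U_t}}=\beta/n$, so on each $U_t$,
\[
|U_t|^{\alpha/n}\frac{\|\chi_{U_t} f\|_\pp}{\|\chi_{U_t}\|_\pp}\sim |U_t|^{(\alpha-\beta)/n}\frac{\|\chi_{U_t} f\|_\pp}{\|\chi_{U_t}\|_\qq}\lesssim \frac{\|\chi_{U_t} f\|_\pp}{\|\chi_{U_t}\|_\qq},
\]
since $\alpha>\beta$ and $|U_t|\leq|\Omega|$. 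This is the place where the strict inequality $\beta<\alpha$ enters the argument.

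By homogeneity I would reduce to the case $\|f\|_\pp=1$ and establish the modular bound $\int_\Omega(T^\alpha_\pp f)^{q(x)}\,dx\leq C$. Using bounded overlap of $\{U_t\}$ and the estimate above, the problem reduces to controlling $\sum_{t\in\Gamma}\int_{U_t}R_t^{q(x)}\,dx$, where $R_t:=\|\chi_{U_t} f\|_\pp/\|\chi_{U_t}\|_\qq$. I would then follow the two-case split used in the proof of Lemma~\ref{lemma:Tsbounded}. When $R_t\leq 1$, the integrand is bounded by $1$ and the total contribution is $\lesssim|\Omega|$. When $R_t>1$, I would first bound $R_t^{q(x)}\leq R_t^{q_+(U_t)}$; then combine $\|\chi_{U_t}\|_\qq\gtrsim|U_t|^{1/q_{U_t}}$ with the $\qq$-version of Lemma~\ref{lemma:LHBequiv}(c) to obtain $|U_t|^{-q_+(U_t)/q_{U_t}}\lesssim|U_t|^{-1}$; and finally use $\|\chi_{U_t} f\|_\pp\leq\|f\|_\pp=1$ together with $q_+(U_t)\geq p_+(U_t)$ (which follows from $q\geq p$) to pass from $\|\chi_{U_t} f\|_\pp^{q_+(U_t)}$ down to $\|\chi_{U_t} f\|_\pp^{p_+(U_t)}\leq\int_{U_t}|f|^{p(y)}\,dy$ via~\cite[Corollary 2.23]{CruzUribeFiorenza}. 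Combining these, $R_t^{q(x)}\lesssim|U_t|^{-1}\int_{U_t}|f|^{p(y)}\,dy$, so integrating over $U_t$ and summing with bounded overlap one more time produces $\sum_t\int_{U_t}R_t^{q(x)}\,dx\lesssim\rho_\pp(f)\leq 1$.

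The main obstacle I anticipate is the apparent exponent mismatch: $\|\chi_{U_t} f\|_\pp$ is a $\pp$-quantity being raised to the power $q(x)$, yet the final bound must be phrased in terms of the modular $\rho_\pp(f)$. The resolution combines three ingredients: the bound $\|\chi_{U_t} f\|_\pp\leq 1$ (which makes larger exponents shrink the expression), the pointwise ordering $q_+(U_t)\geq p_+(U_t)$ (allowing the jump from the $\qq$-side back to the $\pp$-side), and the $\tauLH$ condition on both $\pp$ and $\qq$ (controlling the resulting $|U_t|$-factors via Lemma~\ref{lemma:LHBequiv}). The slack provided by the factor $|U_t|^{(\alpha-\beta)/n}$ is essential throughout, and this is precisely the technical reason why the method cannot be pushed to the critical case $\beta=\alpha$, which ultimately constrains $\alpha$ in Theorem~\ref{theorem:sp-general-intro}.
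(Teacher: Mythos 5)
Your proposal is correct and follows essentially the same route as the paper's proof: reduce by homogeneity to a modular estimate, use Remark~\ref{remark:pp-qq-LH} and Lemma~\ref{corollary:measUt-meanp} to replace $|U_t|^{\alpha/n}/\|\chi_{U_t}\|_\pp$ by $C/\|\chi_{U_t}\|_\qq$ (the paper does this via the averaged inequality~\eqref{eqn:pq-assump3} rather than your averaged identity $1/p_{U_t}-1/q_{U_t}=\beta/n$, which is an immaterial difference), and then run the two-case argument of Lemma~\ref{lemma:Tsbounded} with~\cite[Corollary 2.23]{CruzUribeFiorenza}, $p_+(U_t)\le q_+(U_t)$, and Lemma~\ref{lemma:LHBequiv} for $\qq$. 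One small correction to your commentary: the strict inequality $\beta<\alpha$ is not actually essential here, since the argument only needs $(\alpha-\beta)/n\ge 0$ (equivalently~\eqref{eqn:pq-assump3}), and indeed the paper later uses these off-diagonal estimates with $\beta=\alpha$; the genuine source of the restriction $\alpha<1$ in Theorem~\ref{theorem:sp-general-intro} is the local Sobolev--Poincar\'e inequality on cubes, as explained in Remark~\ref{rmk:alpha=1}, not this operator bound.
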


\begin{proof}
    As in the proof of Lemma \ref{lemma:Tsbounded}, it will suffice  to prove that for $f\in L^\pp(\Omega)$ such that $f\ge 0$ and $\|f\|_\pp=1$, there is a constant $C$ such that
    \[\int_\Omega T^\alpha_\pp f(x) ^{q(x)} \dx \le C.\]
    
    By  Corollary~\ref{corollary:measUt-meanp}, since $\pp \in \tauLH(\Omega)$ we have that 
\[  \frac{|U_t|^{\frac{\alpha}{n}}}{\|\chi_{U_t}\|_\pp}
\approx \frac{|U_t|^{\frac{\alpha}{n}}}{|U_t|^{\frac{1}{p_{U_t}}}} 
= |U_t|^{-\frac{1}{q_{U_t}}} |U_t|^{\frac{\alpha}{n} - \frac{1}{p_{U_t}} + \frac{1}{q_{U_t}}}.  
\]
If we integrate~\eqref{eqn:pq-assump2} over $U_t$, we see that
\begin{equation} \label{eqn:pq-assump3}
    \frac{\alpha}{n} - \frac{1}{p_{U_t}} + \frac{1}{q_{U_t}} \geq 0.  
\end{equation} 
Hence, again by  Corollary~\ref{corollary:measUt-meanp}, since $\qq \in \tauLH(\Omega)$ by Remark~\ref{remark:pp-qq-LH}, 
\[ |U_t|^{-\frac{1}{q_{U_t}}} |U_t|^{\frac{\alpha}{n} - \frac{1}{p_{U_t}} + \frac{1}{q_{U_t}}}
\lesssim  \frac{1}{\|\chi_{U_t}\|_\qq} |\Omega |^{\frac{\alpha}{n} - \frac{1}{p_-} + \frac{1}{q_+}}. 
\]

If we combine these two inequalities, using $q_+<\infty$, we have that
    \begin{multline*}
    \int_\Omega T^\alpha_\pp f(x) ^{q(x)} \dx \\
    = \sum_{t\in\Gamma} \int_{U_t} \left[\frac{\|\chi_{U_t} f\|_\pp}{\|\chi_{U_t}\|_\pp}|U_t|^\frac{\alpha}{n}\right]^{q(x)}\dx 
    \le C^{q_+}\sum_{t\in\Gamma}\int_{U_t}\left[\frac{\|\chi_{U_t} f\|_\pp}{\|\chi_{U_t}\|_\qq}\right]^{q(x)}\dx;
    \end{multline*}
    The constant $C$ depends on $\pp$, $\qq$, $\alpha$, $n$, and $|\Omega|$.  
    The argument continues as in the proof of Lemma~\ref{lemma:Tsbounded}. Since $\|f\|_\pp=1$, if the expression in square brackets is bigger than $1$,  then $\|\chi_{U_t}\|_\qq\le 1$, so by~\cite[Corollary 2.23]{CruzUribeFiorenza} and the fact that $p_+(U_t)\le q_+(U_t)$ (which follows from \eqref{eqn:pq-assump3})  we have that
    \begin{multline*}
    \left[\frac{\|\chi_{U_t} f\|_\pp}{\|\chi_{U_t}\|_\qq}\right]^{q(x)} 
    \le \left[\frac{\|\chi_{U_t} f\|_\pp}{\|\chi_{U_t}\|_\qq}\right]^{q_+(U_t)} \\
    \le \frac{\|\chi_{U_t} f\|_\pp^{p_+(U_t)}}{\|\chi_{U_t}\|_\qq^{q_+(U_t)}}
    \le C\frac{\int_{U_t}f(y)^{p(y)}\dy}{|U_t|^{\frac{q_+(U_t)}{q_-(U_t)}}} 
    \le C \int_{U_t}f(y)^{p(y)}\dy |U_t|^{-1},
    \end{multline*}
    where in the last step we used Lemma \ref{lemma:LHBequiv} and the fact that $\qq \in \tauLH(\Omega)$. Hence, in any case we have that
       \[\left[\frac{\|\chi_{U_t} f\|_\pp}{\|\chi_{U_t}\|_\qq}\right]^{q(x)} \le 1 +   C \int_{U_t}f(y)^{p(y)}\dy |U_t|^{-1},\]
    and consequently,
    \begin{multline*}
     \int_\Omega T^\alpha_\pp f(x) ^{q(x)} \dx 
       \le C\sum_{t\in\Gamma} \int_{U_t}\left[1+C\int_{U_t}f(y)^{p(y)}\dy|U_t|^{-1}\right]\dx\\
        \le C\sum_{t\in\Gamma} |U_t|+C\int_{U_t}f(y)^{p(y)}\dy
        \le C + C|\Omega|,
    \end{multline*}
    which completes the proof.
\end{proof}

\bigskip

}

{  
The following is an off-diagonal version of the $\partial K_0^\tau$ condition.  For all cubes or balls in a given domain it was introduced by the first author and Roberts~\cite{dcu-roberts}.

\begin{lemma}\label{lemma:offdiag-partialK0}
Let $\O\subset\R^n$ be a bounded domain with a tree-covering $\{U_t\}_{t\in\Gamma}$, like the one given by Remark \ref{remark:tree-covering}. Fix $\alpha$, $0<\alpha<n$, and $\pp,\, \qq  \in \Pp(\Omega)$ that satisfy \eqref{eqn:pq-assump1} and~\eqref{eqn:pq-assump2} for some $0<\beta\le\alpha$. Suppose $\pp \in \partial LH_0^\tau(\O)$ for some $\tau\ge 1$.  Then the following estimates hold: 
{ 
\begin{gather}
    \sup_{t\in \Gamma} |U_t|^{-1+\frac{\alpha}{n}}\|\chi_{U_t}\|_\qq\|\chi_{U_t}\|_\cpp<\infty, \label{eqn:offdiag-K0-1}\\
    \sup_{t\in \Gamma} |U_t|^{-1-\frac{\beta}{n}}\|\chi_{U_t}\|_\cqq\|\chi_{U_t}\|_\pp<\infty.\label{eqn:offdiag-K0-2}
\end{gather}
}
\end{lemma}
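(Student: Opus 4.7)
The plan is to reduce everything to the norm-measure equivalence provided by Corollary~\ref{corollary:measUt-meanp}, and then just do the bookkeeping on the exponents of $|U_t|$.

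First I would verify that all four exponents $\pp$, $\qq$, $\cpp$, $\cqq$ lie in $\tauLH(\Omega)$ on the scale relevant to the tree covering. The assumption gives $\pp\in \tauLH(\Omega)$; by Remark~\ref{remark:pp-qq-LH} the relation $\frac{1}{\pp}-\frac{1}{\qq}=\frac{\beta}{n}$ transfers this to $\qq$; and Lemma~\ref{lemma:dual-LHB}, applied under the implicit assumption that $p_->1$ and $q_->1$, gives the same for $\cpp$ and $\cqq$. Therefore Corollary~\ref{corollary:measUt-meanp} applies to each of the four exponents, yielding
\[
\|\chi_{U_t}\|_\pp\sim |U_t|^{1/p_{U_t}},\quad \|\chi_{U_t}\|_\cpp\sim |U_t|^{1/p'_{U_t}},\quad \|\chi_{U_t}\|_\qq\sim |U_t|^{1/q_{U_t}},\quad \|\chi_{U_t}\|_\cqq\sim |U_t|^{1/q'_{U_t}},
\]
with implicit constants uniform in $t\in\Gamma$. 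The single identity I will use repeatedly is the consequence of integrating \eqref{eqn:pq-assump2} over $U_t$:
\[
\frac{1}{p_{U_t}}-\frac{1}{q_{U_t}}=\frac{\beta}{n}.
\]

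For \eqref{eqn:offdiag-K0-1}, substitute to get
\[
|U_t|^{-1+\frac{\alpha}{n}}\|\chi_{U_t}\|_\qq\|\chi_{U_t}\|_\cpp
\sim |U_t|^{-1+\frac{\alpha}{n}+\frac{1}{q_{U_t}}+\frac{1}{p'_{U_t}}}
= |U_t|^{\frac{\alpha}{n}+\frac{1}{q_{U_t}}-\frac{1}{p_{U_t}}}
= |U_t|^{\frac{\alpha-\beta}{n}},
\]
where I used $\frac{1}{p'_{U_t}}=1-\frac{1}{p_{U_t}}$. Since $\beta\le\alpha$ the exponent is non-negative, so $|U_t|^{(\alpha-\beta)/n}\le |\Omega|^{(\alpha-\beta)/n}$, which is finite because $\Omega$ is bounded.

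For \eqref{eqn:offdiag-K0-2}, the same substitution gives
\[
|U_t|^{-1-\frac{\beta}{n}}\|\chi_{U_t}\|_\cqq\|\chi_{U_t}\|_\pp
\sim |U_t|^{-1-\frac{\beta}{n}+\frac{1}{q'_{U_t}}+\frac{1}{p_{U_t}}}
= |U_t|^{-\frac{\beta}{n}+\frac{1}{p_{U_t}}-\frac{1}{q_{U_t}}}
= |U_t|^{0}=1,
\]
using $\frac{1}{q'_{U_t}}=1-\frac{1}{q_{U_t}}$ and the averaged relation above. Thus the supremum is bounded by an absolute constant. The only mild obstacle is confirming the hypotheses under which Corollary~\ref{corollary:measUt-meanp} applies to the dual exponents; beyond that the argument is pure bookkeeping on powers of $|U_t|$.
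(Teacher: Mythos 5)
Your proposal is correct and follows essentially the same route as the paper: apply the norm--measure equivalence of Corollary~\ref{corollary:measUt-meanp} to $\qq$, $\cpp$, $\pp$, $\cqq$ (justified via Lemma~\ref{lemma:dual-LHB} and Remark~\ref{remark:pp-qq-LH}), use $\frac{1}{p'_{U_t}}=1-\frac{1}{p_{U_t}}$, and bound the resulting power of $|U_t|$ via the averaged form of \eqref{eqn:pq-assump2}, exactly as in the paper's computation. The ``mild obstacle'' you flag about applying the corollary to the dual exponents (which needs $p_-,q_->1$) is present in the paper's own argument as well, so it is not a defect specific to your write-up.
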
 
}

\begin{remark}
    As will be clear from the way the proof is written, for \eqref{eqn:offdiag-K0-2} to be true,  instead of the first equality in~\eqref{eqn:pq-assump2}, it suffices to assume that $\frac{\beta}{n} \leq \frac{1}{p(x)}-\frac{1}{q(x)}$.
\end{remark}
{ 
\begin{proof}
We first  prove~\eqref{eqn:offdiag-K0-1}.  Fix $t\in \Gamma$; then by Lemma~\ref{lemma:dual-LHB} and Remark~\ref{remark:pp-qq-LH}, $\cpp,\, \qq \in \tauLH(\Omega)$. Hence, by Lemma~\ref{corollary:measUt-meanp}, we get
\begin{multline*}
    |U_t|^{-1+\frac{\alpha}{n}}\|\chi_{U_t}\|_\qq\|\chi_{U_t}\|_\cpp
     \approx  |U_t|^{-1+\frac{\alpha}{n}}|U_t|^{\frac{1}{q_{U_t}}}|U_t|^{\frac{1}{p_{U_t}'}} \\
     = |U_t|^{-1+\frac{\alpha}{n}}|U_t|^{\frac{1}{q_{U_t}}}|U_t|^{1-\frac{1}{p_{U_t}}} 
     = |U_t|^{\frac{\alpha}{n}+\frac{1}{q_{U_t}}-\frac{1}{p_{U_t}}}
     \leq |\Omega|^{\frac{\alpha}{n}+\frac{1}{q_-}-\frac{1}{p_+}}.
\end{multline*}
The last inequality follows from~\eqref{eqn:pq-assump3}.
The implicit constants are independent of $t$, so if we take the supremum, we get the desired estimate.  

The proof of~\eqref{eqn:offdiag-K0-2} is nearly identical.  Since $\pp,\,\cqq\in \tauLH(\Omega)$, for any $t\in\Gamma$, by 
\begin{multline*}
    |U_t|^{-1-\frac{\beta}{n}}\|\chi_{U_t}\|_\cqq\|\chi_{U_t}\|_\pp
     \approx  |U_t|^{-1-\frac{\beta}{n}}|U_t|^{\frac{1}{q_{U_t}'}}|U_t|^{\frac{1}{p_{U_t}}} \\
         = |U_t|^{-\frac{\beta}{n}-\frac{1}{q_{U_t}}+\frac{1}{p_{U_t}} }
         \leq |\Omega|^{-\frac{\beta}{n}-\frac{1}{q_+}+\frac{1}{p_-}};
     \end{multline*}
     the last inequality follows from the inequality we get if we integrate  the lower estimate in~\eqref{eqn:pq-assump2} as we did to find~\eqref{eqn:pq-assump3}.
\end{proof}
}
% \begin{align*}
% |U_t|^{-1+\frac{\alpha}{n}} \|\chi_{U_t}\|_\qq\|\chi_{U_t}\|_\cpp &\approx |U_t|^{-1+\frac{\alpha}{n}}|U_t|^{\frac{1}{q_{U_t}}}|U_t|^{\frac{1}{p'_{U_t}}}\\
% & = |U_t|^{-1+\frac{\alpha}{n}+\frac{1}{q_-(U_t)}-\frac{1}{p_+(U_t)}}|U_t|^{\frac{1}{q_{U_t}}-\frac{1}{q_-(U_t)}}|U_t|^{1-\frac{1}{p_{U_t}}+\frac{1}{p_+(U_t)}} \\
% \intertext{We continue by applying \ref{lemma:LHBequiv} and the definition of $\qq$:}
% &\le C |U_t|^{\frac{\alpha}{n}+\frac{1}{q_-(U_t)}-\frac{1}{p_+(U_t)}} \\
% &\le C |U_t|^{\frac{\alpha}{n}+\frac{1}{q_-(U_t)}-\frac{1}{p_-(U_t)}} |U_t|^{\frac{1}{p_-(U_t)}-\frac{1}{p_+(U_t)}}\\
% &\le C|\O|^{\frac{1}{p_-(\O)}-\frac{1}{p_+(\O)}}.
% \end{align*}
% The second estimate can be proven by the same means, but it is enough to observe that the relation $\frac{1}{\pp}-\frac{1}{\qq}=\frac{\alpha}{n}$ is reversed for the conjugate exponents: $\frac{1}{\cqq}-\frac{1}{\cpp} = \frac{\alpha}{n}$, which leads to the change of sign in the exponent $\frac{\alpha}{n}$ on the left hand side.  

Finally, we prove an off-diagonal analog of Lemma \ref{lemma:Höldersum}.  It is only in this result, which requires~\eqref{eqn:offdiag-K0-2}, that we were required to introduce the parameter $\beta$.  In Section~\ref{section:improved-SPi} below, we will choose the parameter $\alpha$ so that we have $\beta=\alpha$ when applying this result. 

{ 
\begin{lemma}\label{lemma:offdiag-Höldersum}
 Let $\Omega\subset \R^n$ a bounded domain and $\{U_t\}_{t\in\Gamma}$ a tree-covering, like the one given by Remark \ref{remark:tree-covering}. Fix $\alpha$, $0<\alpha<n$, and $\pp,\, \qq  \in \Pp(\Omega)$ that satisfy \eqref{eqn:pq-assump1} and~\eqref{eqn:pq-assump2} for some $0<\beta\le\alpha$.  Suppose  that $p(\cdot) \in\partial LH_0^{\tau}(\Omega)$ for some $\tau\ge 1$.  Then  for every $f\in L^{\pp}(\O)$ and  $g\in L^{\cqq}(\O)$, the following inequality holds:
\begin{equation}\label{offdiag-holdersum}
\sum_{t \in \Gamma} \|\chi_{U_t} f\|_{p(\cdot)} \|\chi_{U_t} g\|_{q'(\cdot)} \le C \|f\|_{p(\cdot)}\|g\|_{q'(\cdot)}.
\end{equation}
\end{lemma}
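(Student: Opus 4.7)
The plan is to follow the strategy of the proof of Lemma \ref{lemma:Höldersum}, replacing its two main ingredients by the corresponding off-diagonal analogs: the diagonal $K_0$-type estimate \eqref{eqn:tree-K0} is replaced by \eqref{eqn:offdiag-K0-2}, and the boundedness of $T_\pp$ on $L^\pp$ is replaced by the boundedness $T_\pp^\beta : L^\pp(\Omega)\to L^\qq(\Omega)$ furnished by Lemma \ref{lemma:Talphabounded}.

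First I would rewrite the summand by inserting the norms of $\chi_{U_t}$:
\[
\|\chi_{U_t}f\|_\pp\|\chi_{U_t}g\|_\cqq
= \bigl(\|\chi_{U_t}\|_\pp\|\chi_{U_t}\|_\cqq\bigr)\cdot \frac{\|\chi_{U_t}f\|_\pp}{\|\chi_{U_t}\|_\pp}\cdot\frac{\|\chi_{U_t}g\|_\cqq}{\|\chi_{U_t}\|_\cqq}.
\]
Lemma \ref{lemma:offdiag-partialK0} (specifically \eqref{eqn:offdiag-K0-2}) bounds the first factor by a constant times $|U_t|^{1+\beta/n}$. Writing $|U_t|=\int_{U_t}\,dx$ and summing,
\[
\sum_{t\in\Gamma}\|\chi_{U_t}f\|_\pp\|\chi_{U_t}g\|_\cqq
\leq C\int_\Omega \sum_{t\in\Gamma}\Bigl(\chi_{U_t}(x)|U_t|^{\beta/n}\tfrac{\|\chi_{U_t}f\|_\pp}{\|\chi_{U_t}\|_\pp}\Bigr)\Bigl(\chi_{U_t}(x)\tfrac{\|\chi_{U_t}g\|_\cqq}{\|\chi_{U_t}\|_\cqq}\Bigr)\,dx.
\]
Using the elementary fact that $\sum_t a_t b_t \leq (\sum_t a_t)(\sum_t b_t)$ for nonnegative $a_t,b_t$, the integrand is dominated pointwise by $T_\pp^\beta f(x)\cdot T_\cqq g(x)$.

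Next I would apply the variable-exponent H\"older inequality \eqref{eqn:holder} to conclude
\[
\int_\Omega T_\pp^\beta f\cdot T_\cqq g\,dx \leq C\|T_\pp^\beta f\|_\qq\,\|T_\cqq g\|_\cqq.
\]
Lemma \ref{lemma:Talphabounded}, applied with the operator parameter equal to our $\beta$ (so that \eqref{eqn:pq-assump2} holds with equality for that value), controls the first factor by $\|f\|_\pp$. Lemma \ref{lemma:Tsbounded} applied to the exponent $\cqq$ controls the second factor by $\|g\|_\cqq$; to use it I would verify that $\cqq\in\partial LH_0^\tau(\Omega)$ by first invoking Remark \ref{remark:pp-qq-LH} to transfer the boundary log-H\"older condition from $\pp$ to $\qq$, and then Lemma \ref{lemma:dual-LHB} to pass from $\qq$ to $\cqq$, which is legitimate since $q_-\geq p_->1$.

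The only real bookkeeping point — and the reason the parameter $\beta$ was introduced in Lemma \ref{lemma:offdiag-partialK0} — is that the exponent $1+\beta/n$ produced by \eqref{eqn:offdiag-K0-2} is exactly what is needed so that the factor $|U_t|^{\beta/n}$ is absorbed into the definition of $T_\pp^\beta$, leaving one full power of $|U_t|$ to become the volume integral used in the finite-overlap step. I do not expect any substantive obstacle beyond this accounting and the log-H\"older inheritance to the dual and off-diagonal exponents; the entire argument is a careful diagonalization of the scheme already used for Lemma \ref{lemma:Höldersum}.
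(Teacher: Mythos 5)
Your proposal is correct and follows essentially the same route as the paper's proof: insert the factors $\|\chi_{U_t}\|_\pp\|\chi_{U_t}\|_\cqq$, bound them by $|U_t|^{1+\beta/n}$ via \eqref{eqn:offdiag-K0-2}, dominate the resulting sum by $\int_\Omega T_\pp^\beta f\, T_\cqq g\,\dx$ using the finite overlap of the $U_t$, and conclude with H\"older's inequality \eqref{eqn:holder} together with Lemmas \ref{lemma:Talphabounded} and \ref{lemma:Tsbounded}. The only small adjustment concerns your justification for applying Lemma \ref{lemma:Tsbounded} and Lemma \ref{lemma:dual-LHB} to $\qq$: since \eqref{eqn:pq-assump1} only guarantees $p_-\geq 1$, the bound $q_-\geq p_->1$ is not available in general, but the relation $1/q(x)=1/p(x)-\beta/n\leq 1-\beta/n$ gives $q_-\geq n/(n-\beta)>1$, which is all that is needed.
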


\begin{proof}
% Using Lemma \ref{lemma:offdiag-partialK0}, \eqref{eqn:qq-defn} and the fact that $|U_t|$ is bounded we have that:
% \begin{align*}
% |U_t|^{\frac{1-\alpha}{n}} &\approx |U_t|^\frac{1-\alpha}{n}|U_t|^{\frac{1}{p_{U_t}}+1-\frac{1}{q_{U_t}}} \|\chi_{U_t}\|^{-1}_\pp \|\chi_{U_t}\|^{-1}_\cqq \\
% &=|U_t|^\frac{1-\alpha}{n}|U_t|^{\frac{1}{p_{U_t}}+1-\frac{1}{p_{U_t}}+\frac{\alpha}{n}} \|\chi_{U_t}\|^{-1}_\pp \|\chi_{U_t}\|^{-1}_\cqq \\
% &= |U_t|^{1+\frac{1}{n}}\|\chi_{U_t}\|^{-1}_\pp \|\chi_{U_t}\|^{-1}_\cqq \\
% &\le |U_t|^{1+\frac{\alpha}{n}}\|\chi_{U_t}\|^{-1}_\pp \|\chi_{U_t}\|^{-1}_\cqq.
% \end{align*}
The proof is very similar to the proof of Lemma~\ref{lemma:Höldersum}, so we omit some details.  By inequality~\ref{eqn:offdiag-K0-2} and since the cubes $\{U_t\}_{t\in \Gamma}$ have finite overlap, we have that
\begin{align*}
    \sum_{t \in \Gamma} \|\chi_{U_t} f\|_{p(\cdot)} \|\chi_{U_t} g\|_{q'(\cdot)} 
    &\le C\sum_{t \in \Gamma} |U_t|^{1+\frac{\beta}{n}}\frac{\|\chi_{U_t} f\|_{p(\cdot)}}{\|\chi_{U_t}\|_\pp} \frac{\|\chi_{U_t} g\|_{q'(\cdot)}}{\|\chi_{U_t}\|_\cqq} \\
    &\le C \int_\Omega T_{\pp}^\beta f(x) T_\cqq g(x) \dx\\
    &\le C\|T_\pp^\beta f\|_\qq \|T_\cqq g\|_\cqq\\
    &\le C\|f\|_\pp \|g\|_\cqq ;
\end{align*}
the last two inequalities follow from H\"older's inequality \ref{eqn:holder}, and from Lemmas~\ref{lemma:Tsbounded} and~\ref{lemma:Talphabounded}.
\end{proof}
}

\section{A decomposition of functions for John domains}
\label{section:john}

In this section we prove a decomposition theorem  which in our local-to-global argument will let us extend our results from cubes to John domains.  We begin by recalling the definition of John domains.  They  were introduced by Fritz John in \cite{J}. They include domains with a fractal boundary, such as the interior of the Koch snowflake, domains with inner cusps and even with cuts, etc. However, they have properties similar to more \emph{regular} domains with respect to  Sobolev-Poincaré type inequalities.  It was shown by Bojarski~\cite{MR0982072} that Sobolev-Poincar\'e inequalities hold on John domains without weights. Later, Chua~\cite{MR1140667} showed that a weighted Sobolev-Poincar\'e inequality holds, where the same weight appears on the left and right hand sides. By contrast, the classical Sobolev-Poincaré inequality does not hold on domains with external cusps or, more generally, with boundary of type H\"older-$\alpha$. For these domains, \emph{weighted} versions of the inequalities can be derived, with a weight that compensates the singularities of the boundary.

\begin{definition}\label{definition:john}
A bounded domain $\Omega$ in $\R^n$ is a John domain with parameter $\lambda>1$ if there exists a point $x_0\in\Omega$ such that, given any $y\in\Omega$, there exists a rectifiable curve parameterized by arc length $\gamma : [0,\ell] \rightarrow \Omega$, with $\gamma(0)=y$, $\gamma(\ell)=x_0$, and $\lambda\, \dist(\gamma(t),\partial \Omega) \geq t$.
\end{definition}

The following result was proved by the second author in \cite{L2} and gives a characterization of John domains in terms of tree-coverings.

\begin{proposition}\label{prop:tcov-John} A bounded domain $\O \subset \R^n$ is a John domain if and only if given a Whitney decomposition $\{Q_t\}_{t\in\Gamma}$ of $\O$, there exists a tree structure for the set of indices $\Gamma$ satisfying the conditions in Remark \ref{remark:tree-covering} and a constant $K>1$ such that
\begin{align}\label{Boman tree}
Q_s\subseteq KQ_t,
\end{align}
for any $s,t\in\Gamma$ with $s\succeq t$. In other words, the shadow $W_t$ of $Q_t$ is contained in $KQ_t$. 
%the intersection of the cubes associated to adjacent indices, $Q_t$ and $Q_{t_p}$, is an $n-1$ dimensional face of one of these cubes.
\end{proposition}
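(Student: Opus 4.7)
The plan is to prove both directions of the equivalence, essentially by passing from the metric John condition to a combinatorial condition on the Whitney decomposition and vice versa.

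For the direction \emph{John} $\Rightarrow$ \emph{tree-covering with shadow property}, fix the John point $x_0$ and let $Q_a$ be the Whitney cube containing it (which will be the root). For each Whitney cube $Q_t$, choose a point $y_t \in Q_t$ (say its center) and the John curve $\gamma_t:[0,L_t]\to\Omega$ joining $y_t$ to $x_0$. This curve traverses a finite chain of Whitney cubes, and I would use this chain to define the parent relation. To avoid cycles and ensure I obtain an actual rooted tree, I would proceed by induction on the generations $\{Q_t : \diam(Q_t)=2^{-k}\}$ (or equivalently on quasi-hyperbolic distance to $Q_a$), defining the parent of $Q_t$ to be the first strictly larger-generation cube met by $\gamma_t$. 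The expanded cubes $U_t=\tfrac{17}{16}\,\mathrm{int}\,Q_t$ then automatically give the tree-covering in the sense of Definition~\ref{def:tree-covering}, as in Remark~\ref{remark:tree-covering}.

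The shadow estimate $Q_s\subseteq KQ_t$ for $s\succeq t$ is where the John property is used quantitatively. Let $t_0$ be the first time $\gamma_s$ meets $Q_t$. Since $\gamma_s(t_0)\in Q_t$, the Whitney property gives $\dist(\gamma_s(t_0),\partial\Omega)\le 5\diam(Q_t)$; combined with $\dist(\gamma_s(t_0),\partial\Omega)\ge t_0/\lambda$ this yields $t_0\lesssim \lambda\diam(Q_t)$. Since $|y_s-\gamma_s(t_0)|\le t_0$, the triangle inequality applied to the distance function gives
\[
\dist(y_s,\partial\Omega)\le \dist(\gamma_s(t_0),\partial\Omega)+t_0 \lesssim \lambda\diam(Q_t),
\]
and the Whitney property applied to $Q_s$ forces $\diam(Q_s)\lesssim \lambda\diam(Q_t)$. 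Then for any $z\in Q_s$,
\[
|z-x_{Q_t}|\le \diam(Q_s)+t_0+\tfrac12\diam(Q_t)\lesssim \lambda\diam(Q_t),
\]
so $Q_s\subseteq KQ_t$ with $K$ depending only on $\lambda$ and $n$.

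For the reverse direction \emph{tree-covering with shadow property} $\Rightarrow$ \emph{John}, fix $x_0$ in the root cube $Q_a$. Given $y\in\Omega$, let $y\in Q_s$ and let $Q_s=Q_{s_0}\succ Q_{s_1}\succ\cdots\succ Q_{s_m}=Q_a$ be the unique chain in the tree. I would construct a piecewise-linear curve that, inside each $Q_{s_i}$, joins the entry point to the center of the shared face with $Q_{s_{i-1}}$, and finally joins the last center to $x_0$. The length of the segment inside $Q_{s_i}$ is $\lesssim\diam(Q_{s_i})$, so the arc length from $y$ up to the moment of leaving $Q_{s_i}$ is bounded by $\sum_{j\ge i}\diam(Q_{s_j})$. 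The shadow property $Q_{s_j}\subseteq KQ_{s_i}$ for $j\ge i$ and the Whitney relation $\diam(Q_{s_j})\le 4\,\dist(Q_{s_j},\partial\Omega)$ together with a geometric-series argument (using that infinitely many Whitney cubes of the same size cannot nest inside a fixed dilation of one of them) give $\sum_{j\ge i}\diam(Q_{s_j})\lesssim \diam(Q_{s_i})$. Since the Whitney property also gives $\dist(\gamma(t),\partial\Omega)\gtrsim \diam(Q_{s_i})$ whenever $\gamma(t)\in Q_{s_i}$, we conclude $\dist(\gamma(t),\partial\Omega)\gtrsim t$, which is the John inequality.

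The main obstacle is the first direction: producing an honest tree (not merely a directed graph) whose shadows respect the geometric shadow condition simultaneously for all descendants. The inductive construction by generations, together with the uniform bound on $t_0$ derived above, is what forces the shadow property to propagate correctly along every branch, and this is the step that most essentially uses the John parameter $\lambda$.
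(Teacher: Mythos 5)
The paper itself does not prove this proposition; it is quoted from the second author's earlier work \cite{L2}, so your argument has to stand on its own, and the forward implication has a genuine gap. You define the parent of $Q_t$ to be the first strictly larger-generation cube met by the John curve $\gamma_t$, and then claim the expanded cubes ``automatically'' form a tree-covering as in Remark~\ref{remark:tree-covering}. They do not: that cube in general does not even touch $Q_t$ (the curve passes through many cubes of size at most that of $Q_t$ before reaching it), so the adjacency required in Remark~\ref{remark:tree-covering} (shared $(n-1)$-face) fails, and condition (3) of Definition~\ref{def:tree-covering} fails because $U_t\cap U_{t_p}=\emptyset$, leaving no admissible $B_t$. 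Moreover, your shadow estimate starts from ``let $t_0$ be the first time $\gamma_s$ meets $Q_t$'', which presupposes that every tree-ancestor $Q_t$ of $Q_s$ is met by $\gamma_s$. But ancestors beyond the immediate parent are chosen using the curves of the intermediate cubes, not $\gamma_s$, so there is no reason $\gamma_s$ visits them; this is exactly the ``propagation along every branch'' that you assert but never prove. If you instead repair the covering conditions by taking the parent to be the next touching cube along $\gamma_t$, then cube sizes along an ancestor path need not increase, and iterating the one-step inclusion $Q_t\subseteq C\,Q_{t_p}$ only yields $C^k$ after $k$ generations, not a uniform $K$. (With your ``strictly larger generation'' parent one can in fact sum a geometric series of displacements to get a uniform $K$, but then one must still reconcile that tree with the face-adjacency and $B_t$ requirements --- reconciling the two demands simultaneously is the real content of the result in \cite{L2}.)

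The converse direction is essentially right and is the standard argument: the chain cubes traversed before reaching $Q_{s_i}$ are descendants of $s_i$, hence contained in $KQ_{s_i}$, same-size cubes among them occur with bounded multiplicity, and the resulting geometric sum of diameters is $\lesssim \diam(Q_{s_i})$, which combined with $\dist(\gamma(t),\partial\Omega)\gtrsim\diam(Q_{s_i})$ gives the John condition. Note, though, that your inequalities are stated with the indices reversed: with your labelling $Q_{s_0}=Q_s$, the cubes already traversed are $Q_{s_j}$ with $j\le i$, and the shadow property gives $Q_{s_j}\subseteq KQ_{s_i}$ for $j\le i$ (equivalently $Q_{s_i}\subseteq KQ_{s_j}$ for the ancestors $j\ge i$), so the sum you need to control is $\sum_{j\le i}\diam(Q_{s_j})$, not $\sum_{j\ge i}\diam(Q_{s_j})$, which would be false since ancestors (e.g.\ the root) can be much larger.
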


\begin{remark} \label{remark:tree-covering-john}
Let $\O\subset \R^n$ be a bounded John domain and $\{Q_t\}_{t\in\Gamma}$ a Whitney decomposition of $\O$. Let $x_t$ denote the center of $Q_t$. Then by Proposition \ref{prop:tcov-John}, there is a constant $\tau_K$ depending on $K$ such that $W_t\subset B_{x_t,\tau_K}$. Moreover, $|B_{x_t,\tau_K}|\sim |W_t|$ with constants depending on $n$ and $K$. {  Hence, for John domains, the characterization of log-H\"older continuity at the boundary in Lemma \ref{lemma:LHBequiv} implies that $\pp \in \partial LH_0^{\tau_K}(\Omega)$ if and only if for every $t\in\Gamma$,
\[|W_t|^{-(p(y)-p_-(W_t))} \le C. \]
In fact, this estimate is the main assumption on $\pp$ that we actually need in the sequel. }
\end{remark}

Hereafter, when we consider a tree-covering of a John domain, we assume that the tree-covering is taken as in Proposition \ref{prop:tcov-John} and that $\tau_K$ is the constant from Remark~\ref{remark:tree-covering-john}.   As an immediate consequence, we get the following corollary to Theorem~\ref{thm: continuity of A}.

\begin{corollary}\label{continuity of A on John}
Let $\Omega\subset\R^n$ be a bounded John domain with a tree-covering $\{U_t\}_{t\in\Gamma}$. If $\qq\in \mathcal{P}(\Omega)$ is such that $\qq \in \partial LH_0^{\tau_K}(\Omega)$ condition and $1<q_-\leq q_+ <\infty$, then the operator $A_\Gamma$ defined in \eqref{def of A} is bounded from $L^{q(\cdot)}(\Omega)$ to itself.
\end{corollary}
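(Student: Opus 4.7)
The plan is simply to verify that the $\partial LH_0^{\tau_K}(\Omega)$ hypothesis on $\qq$, combined with the John property of $\Omega$, implies the key estimate
\[
|W_t|^{-(q(y)-q_-(W_t))} \le C
\]
for every $t\in \Gamma$ and almost every $y \in W_t$, which is the hypothesis \eqref{eqn:tree-diening} of Theorem~\ref{thm: continuity of A}. Once this is established, the boundedness of $A_\Gamma$ on $L^{q(\cdot)}(\Omega)$ is a direct application of that theorem.

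To verify the estimate, I would invoke Proposition~\ref{prop:tcov-John}: since $\Omega$ is a bounded John domain, the tree-covering can be chosen so that for each $t\in\Gamma$ the shadow $W_t$ is contained in $K Q_t$ for a uniform $K>1$. Letting $x_t$ be the center of $Q_t$ and choosing $\tau_K \ge 1$ large enough depending on $K$ and $n$, we have $W_t \subset B(x_t,\tau_K d(x_t)) = B_{x_t,\tau_K}$, with $|W_t|\sim |B_{x_t,\tau_K}|$, where the implicit constants depend only on $n$ and $K$. This is exactly the content of Remark~\ref{remark:tree-covering-john}.

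Now, since $\qq\in \partial LH_0^{\tau_K}(\Omega)$ with $1<q_-\le q_+<\infty$, Lemma~\ref{lemma:LHBequiv} gives that
\[
|B_{x_t,\tau_K}|^{-(q(y)-q_-(B_{x_t,\tau_K}))} \le C
\]
for every $t\in\Gamma$ and almost every $y\in B_{x_t,\tau_K}$, with a constant $C$ independent of $t$. Because $W_t \subset B_{x_t,\tau_K}$, we have $q_-(W_t) \ge q_-(B_{x_t,\tau_K})$, and because $|W_t|\sim |B_{x_t,\tau_K}|$, the exponential factors differ only by a multiplicative constant that is uniformly controlled by $q_+-q_-\le q_+<\infty$. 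Combining these two observations yields~\eqref{eqn:tree-diening} with exponent $\qq$.

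With this hypothesis verified, Theorem~\ref{thm: continuity of A} applies directly to give the boundedness of $A_\Gamma$ on $L^{q(\cdot)}(\Omega)$. There is no real obstacle in this proof; the whole content of the corollary is that the John domain condition, via Proposition~\ref{prop:tcov-John}, upgrades the $\partial LH_0^{\tau_K}$ condition (which controls oscillation on balls $B_{x,\tau_K}$) to a condition on the shadows $W_t$, which is the form needed by Theorem~\ref{thm: continuity of A}.
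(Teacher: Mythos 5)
Your proposal is correct and follows exactly the route the paper intends: the corollary is stated there as an immediate consequence of Theorem~\ref{thm: continuity of A} together with Remark~\ref{remark:tree-covering-john}, which (via Proposition~\ref{prop:tcov-John} and Lemma~\ref{lemma:LHBequiv}) upgrades the $\partial LH_0^{\tau_K}$ condition to the shadow estimate $|W_t|^{-(q(y)-q_-(W_t))}\le C$, i.e.\ hypothesis~\eqref{eqn:tree-diening}. Your verification of that estimate, including the harmless multiplicative constant coming from $|W_t|\sim|B_{x_t,\tau_K}|$ and $q_+<\infty$, matches the paper's argument.
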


{  Our main result in this section is the following decomposition theorem.  We begin with a definition.

\begin{definition}\label{Definition decomposition} 
Given a bounded domain $\Omega\subset\R^n$, let  $\{U_t\}_{t\in\Gamma}$  be a tree-covering of $\Omega$ as in Remark~\ref{remark:tree-covering}.    Given $g\in L^1(\Omega)$ with $\int g \dx =0$, we say that a collection of functions $\{g_t\}_{t\in\Gamma}$
  in $L^1(\Omega)$ is a \emph{decomposition of $g$} subordinate to $\{U_t\}_{t\in\Gamma}$ if the following  properties are satisfied:
\begin{enumerate}
\item $\displaystyle g=\sum_{t\in \Gamma} g_t;$
\item $\supp (g_t)\subset U_t;$
\item $\displaystyle \int_{U_t} g_t \dx =0$ for all $t\in\Gamma$.
\end{enumerate}
\end{definition}  }

\medskip

\begin{theorem}\label{Decomp Thm} 
Given a bounded John domain $\Omega\subset\R^n$, let $\{U_t\}_{t\in \Gamma}$ be a tree-covering of $\O$ as given by Proposition~\ref{prop:tcov-John}. Fix $\qq\in \partial LH_0^{\tau_K}(\O)$, with $1<q^-\le q^+<\infty$. Then for every $g\in L^{q(\cdot)}(\Omega)$ such that $\int_\O g\dx =0$, there exists a decomposition $\{g_t\}_{t\in\Gamma}$ of $g$ subordinate to $\{U_t\}_{t\in\Gamma}$ with the additional property that
\begin{equation}\label{Decomp estim}
\bigg\|\sum_{t\in\Gamma} \chi_{U_t} \frac{\|\chi_{U_t} g_t\|_{q(\cdot)}}{\|\chi_{U_t}\|_{q(\cdot)}}\bigg\|_{q(\cdot)} \le C \|g\|_{q(\cdot)}.
\end{equation}
\end{theorem}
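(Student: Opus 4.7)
The plan is to construct the pieces $\{g_t\}$ by a Whitney–Boman partition-of-unity argument adapted to the tree structure and then control the sum using the Hardy-type operators $T_{q(\cdot)}$ and $A_\Gamma$ already proven bounded. Fix a partition of unity $\{\psi_t\}_{t\in\Gamma}$ subordinate to $\{U_t\}$, set $\Phi_t=\sum_{s\succeq t}\psi_s$ (supported in $W_t$, with $\Phi_a\equiv 1$), and define
\[ c_t=\frac{1}{|B_t|}\int_\Omega g\,\Phi_t\,dx \quad (t\in\Gamma^*), \qquad g_t=\psi_t g-\chi_{B_t}c_t+\sum_{s:\,s_p=t}\chi_{B_s}c_s, \]
with the convention $\chi_{B_a}c_a:=0$. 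Property~(2) of the decomposition is immediate, since $B_t\subset U_t$ and $B_s\subset U_{s_p}=U_t$ whenever $s_p=t$. Property~(1) follows by telescoping the corrections along the tree and using $\sum_t\psi_t=\chi_\Omega$. Property~(3) reduces to the identity $c_t|B_t|=\int g\,\Phi_t\,dx$, which holds by construction; the root case uses the hypothesis $\int_\Omega g=0$ together with $\Phi_a\equiv 1$.

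For the norm estimate, the triangle inequality yields
\[ \|\chi_{U_t}g_t\|_{q(\cdot)}\le \|\chi_{U_t}g\|_{q(\cdot)}+\|c_t\chi_{B_t}\|_{q(\cdot)}+\sum_{s:\,s_p=t}\|c_s\chi_{B_s}\|_{q(\cdot)}. \]
Here the John hypothesis is essential: Proposition~\ref{prop:tcov-John} gives $W_t\subset KQ_t$, which together with property~(3) of the tree covering implies $|W_t|\approx|B_t|$. Consequently
\[ |c_t|\le\frac{1}{|B_t|}\int_{W_t}|g|\,dy\lesssim\frac{1}{|W_t|}\int_{W_t}|g|\,dy, \]
and since the $\{B_s\}$ are pairwise disjoint, for $x\in B_t$ only the $s=t$ summand of $A_\Gamma|g|(x)$ is nonzero, giving the pointwise bound $|c_t|\chi_{B_t}\lesssim A_\Gamma|g|$. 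Hence $\|c_t\chi_{B_t}\|_{q(\cdot)}\lesssim \|\chi_{B_t}A_\Gamma|g|\|_{q(\cdot)}\le\|\chi_{U_t}A_\Gamma|g|\|_{q(\cdot)}$, and the same estimate applies to each child term $\|c_s\chi_{B_s}\|_{q(\cdot)}$ with $B_s\subset U_{s_p}$. Dividing the triangle inequality by $\|\chi_{U_t}\|_{q(\cdot)}$, summing on $t$, and reindexing the double sum by the parent $t=s_p$ (each vertex has boundedly many children by the Whitney property), I arrive at
\[ \sum_{t\in\Gamma}\chi_{U_t}(x)\,\frac{\|\chi_{U_t}g_t\|_{q(\cdot)}}{\|\chi_{U_t}\|_{q(\cdot)}}\lesssim T_{q(\cdot)}g(x)+T_{q(\cdot)}(A_\Gamma|g|)(x). \]

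Taking $L^{q(\cdot)}$ norms and applying Lemma~\ref{lemma:Tsbounded} (boundedness of $T_{q(\cdot)}$ under $\partial LH_0^{\tau_K}$) together with Corollary~\ref{continuity of A on John} (boundedness of $A_\Gamma$ on $L^{q(\cdot)}$ over a John domain) then delivers the required bound by $\|g\|_{q(\cdot)}$. I expect the main obstacle to be the passage from the pointwise estimate $|c_t|\chi_{B_t}\lesssim A_\Gamma|g|$, which lives on the small disjoint sets $B_t$, to a bound controlled by $T_{q(\cdot)}$ on the expanded, overlapping $U_t$; the John-domain comparability $|W_t|\approx|B_t|$ is precisely what identifies $|c_t|$ with the $A_\Gamma$-average on $B_t$, while the hypothesis $q(\cdot)\in\partial LH_0^{\tau_K}$ is exactly what makes $T_{q(\cdot)}$ bounded so that the composition $T_{q(\cdot)}\circ A_\Gamma$ gives the final estimate.
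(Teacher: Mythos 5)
Your construction is exactly the paper's: your $\chi_{B_t}c_t$ coincides with the correction functions $h_t$ in the paper's proof, and your norm estimate (pointwise domination of the corrections by $A_\Gamma|g|$, then boundedness of $T_{q(\cdot)}$ and of $A_\Gamma$ on $L^{q(\cdot)}$ via the John property $|W_t|\lesssim|B_t|$) is the same argument. The only cosmetic difference is that you split the children sum by the triangle inequality and invoke the bounded number of children, where the paper keeps that sum inside the norm and uses the disjointness of the $B_s$; both are fine.
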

\begin{proof}
Fix $g\in L^{q(\cdot)}(\Omega)$.  Let $\{\phi_t\}_{t\in\Gamma}$ be a partition of the unity subordinate to $\{U_t\}_{t\in\Gamma}$: i.e.,  ${\rm{supp}}(\phi_t)\subset U_t$, $0\le\phi_t(x)\le 1$, and $\sum_t \phi_t(x) = 1$ for all $x\in\Omega$. We first define an initial decomposition of $g$  by $f_t = g\phi_t$. The collection $\{f_t\}_{t\in\Gamma}$ satisfies properties $(1)$ and $(2)$ in Definition~\ref{Definition decomposition}, but not necessarily $(3)$. Hence, we modify these functions as follows.
For each $s\in \Gamma$, $s\neq a$, define
\[h_s(x) := \frac{\chi_s(x)}{|B_s|}\int_{W_s} \sum_{t\succeq s} f_t(y)\dy,\]
Note that ${\rm{supp}}(h_s)\subset B_s$ and 
$$\int h_s(x)\dx = \int_{W_s}\sum_{t\succeq s} f_t(y)\dy.$$ 
Now define, for $t\neq a$, 
\begin{equation*}
g_t(x) := f_t(x) + \Big(\sum_{s:s_p = t}h_s(x)\Big) - h_t(x),
\end{equation*}
and 
\[ g_a(x) := f_a(x) + \sum_{s:s_p = a}h_s(x). \]
Recall that $s_p$ denotes the parent of $s$ in the tree $\Gamma$.
Note that the summations in these definitions are finite since they are indexed over the children of $t$ (or $a$). It is easy to check that $\{g_t\}_{t\in\Gamma}$ satisfies all the properties of Definition \ref{Definition decomposition}. Therefore, we only need to  prove~\eqref{Decomp estim}. 

Observe that for any $s,t\in \Gamma$, $s\neq a$, 
\begin{align*}
|h_s(x)| &\le \frac{\chi_{s}(x)}{|B_s|} \int_{W_s} |g(x)|\,\dx \le \frac{|W_s|}{|B_s|}\frac{\chi_{s}(x)}{|W_s|} \int_{W_s} |g(x)|\,\dx,
\end{align*}
and
\[|f_t(x)| \le |g(x)|\chi_{U_t}(x).\]
Moreover, by Proposition \ref{prop:tcov-John}, we have that $|W_s|\le C|B_s|$,  where the constant $C$ depends only on  $n$ and the constant in \eqref{Boman tree}.  Therefore, 
\[|g_t(x)| \le C\Big(\chi_{U_t} |g(x)| + \sum_{s:s_p=t} \frac{\chi_s(x)}{|W_s|}\int_{W_s} |g|\dy + \frac{\chi_t(x)}{|W_t|}\int_{W_t} |g|\,dy \Big),\]
and so we can estimate as follows:
\begin{align*}
\bigg\|\sum_{t\in\Gamma} \chi_{U_t} \frac{\|g_t\|_\qq} {\|\chi_{U_t}\|_\qq} \bigg\|_\qq 
&\le 
C\bigg\{\bigg\|\sum_{t\in\Gamma} \chi_{U_t} \frac{\|g\chi_{U_t}\|_\qq} {\|\chi_{U_t}\|_\qq} \bigg\|_\qq  \\
&\qquad + \bigg\|\sum_{t\in\Gamma} \chi_{U_t} \frac{\|\sum_{s:s_p=t} \frac{\chi_s(x)}{|W_s|}\int_{W_s} |g|\|_{\qq}} {\|\chi_{U_t}\|_\qq} \bigg\|_{\qq} \\
&\qquad +  \bigg\|\sum_{t\in\Gamma} \chi_{U_t} \frac{\| \frac{\chi_t(x)}{|W_t|}\int_{W_t} |g|\|_\qq} {\|\chi_{U_t}\|_\qq} \bigg\|_\qq \bigg\} \\
&= C\{I_1 + I_2 + I_3\}.
\end{align*}
By Lemma \ref{lemma:Tsbounded},
\[I_1 = \|T_\qq g\|_\qq \le C\|g\|_\qq; \]
moreover, Lemmas \ref{lemma:Tsbounded} and Theorem \ref{thm: continuity of A} show that
\[I_2 +I_3  \le C \|T_\qq(A_\Gamma g)\|_\qq \le C\|A_\Gamma g\|_\qq \le C\|g\|_\qq.\]
This completes the proof.
\end{proof}

\begin{remark}\label{remark:DecompnotJohn}
    In Theorem~\ref{Decomp Thm} we only used that  $\Omega$ is a John domain to show that   $\frac{|W_t|}{|B_t|}\le C$ for any $t\in \Gamma$, $t\neq a$. If this estimate does not hold, then $A_\Gamma(g)$ would be replaced by the operator that maps $g$ to 
    \begin{equation*}
    \sum_{t\in \Gamma^*} \frac{|W_t|}{|B_t|} \frac{\chi_{B_t}(x)}{|W_t|}\int_{W_t} |g(y)| \dy.
    \end{equation*}
    For irregular domains, the factor $\frac{|W_t|}{|B_t|}$  becomes a \emph{weight} that can often be expressed in terms of a power of the distance to the boundary. In this context, the decomposition can be obtained provided that the operator $A_\Gamma$ is bounded in \emph{weighted spaces}. See \cite{L1,LGO1} for an  example in the classical Lebesgue spaces over H\"older-$\alpha$ domains.  {  We will consider this problem in a subsequent work.}
\end{remark}

%%%%%%%%%%%%%%%%%%%%%%%%%%%%%%%%%%%%%%%%%%%%%%%%%%%%%%%%%%%%%%%%%%%%%%%%%%%%%%%%%%%

\section{Improved Sobolev-Poincaré inequalities}
\label{section:improved-SPi}

In this section we prove Theorem~\ref{theorem:sp-general-intro} and related theorems.   To state these results, throughout this section
 we will assume that $\pp\in \Pp(\Omega)$ satisfies $1<p_-(\O)\le p_+(\O)<\infty$, and that $\qq\in \Pp(\Omega)$ is defined by \eqref{eq:def q-intro} and \eqref{eq:alpha-intro}.  For the convenience of the reader we repeat these definitions here:
\begin{equation}\tag{\ref{eq:def q-intro}}
\frac{1}{\pp}-  \frac{\alpha}{n} = \frac{1}{\qq},
\end{equation}
where $\alpha$ satisfies
\begin{equation}\tag{\ref{eq:alpha-intro}}
\begin{cases} 0\le \alpha < 1 &\textrm{ if } p_+(\O)<n \\
0\le \alpha< \frac{n}{p_+(\O)} & \textrm{ if } p_+(\O)\geq n. 
\end{cases}
\end{equation}
The restriction that $0\le \alpha<1$ is discused in Remark~\ref{rmk:alpha=1}.  

\medskip

To prove our main result on John domains using a local-to-global argument, we first need  to prove variable exponent Sobolev-Poincar\'e inequalities on cubes, which we will do using embedding theorems and the constant exponent Sobolev-Poincar\'e inequality.  We first recall the statement of this result; for a proof, see \cite[Lemma 2.1]{HH} or Bojarski~\cite{MR0982072}.  

\begin{lemma}\label{lemma:classical-sp}
Let $D\subset \R^n$ be a bounded John domain with parameter $\lambda$ from Definition \ref{definition:john}.  Fix $1\leq p<\infty$.  If $p<n$ and $p\le q\le p^*$, then for every $f\in W^{1,p}(D)$,
\begin{equation}\label{eq:classical-sp-case1}
\|f-f_D\|_{L^q(D)}
\le C(n,p,\lambda)|D|^{\frac{1}{n}+\frac{1}{q}-\frac{1}{p}}\|\nabla f\|_{L^p(D)}.
\end{equation}
If $p\ge n$ and $q<\infty$, then for every $f\in W^{1,p}(D)$,
\begin{equation}\label{eq:classical-sp-case2}
\|f-f_D\|_{L^q(D)}
\le C(n,q,\lambda)|D|^{\frac{1}{n}+\frac{1}{q}-\frac{1}{p}}\|\nabla f\|_{L^p(D)}.
\end{equation}
\end{lemma}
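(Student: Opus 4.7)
The plan is to reduce both cases to the critical-exponent Sobolev--Poincar\'e inequality on John domains (the case $q=p^*$ when $p<n$), which is due to Martio \cite{Martio} for $1<p<n$ and to Bojarski \cite{MR0982072} for $p=1$. The reduction then relies only on H\"older's inequality and a scaling argument.

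First I would handle the case $p<n$ at the critical exponent $q=p^*$. The Martio--Bojarski result gives
\[
\|f-f_D\|_{L^{p^*}(D)} \le C(n,p,\lambda)\|\nabla f\|_{L^p(D)},
\]
and since $\frac{1}{n}+\frac{1}{p^*}-\frac{1}{p}=0$, the volume factor in the desired inequality is trivial. To see that the constant is independent of $|D|$, I would invoke the dilation $D\mapsto rD$: the John-domain parameter $\lambda$ is invariant, and when $q=p^*$ the norms $\|f-f_D\|_{L^{p^*}(D)}$ and $\|\nabla f\|_{L^p(D)}$ scale by the same power of $r$, so the constant is unchanged. For subcritical $q\in[p,p^*)$, H\"older's inequality yields
\[
\|f-f_D\|_{L^q(D)} \le |D|^{\frac{1}{q}-\frac{1}{p^*}}\,\|f-f_D\|_{L^{p^*}(D)},
\]
and the identity $\frac{1}{q}-\frac{1}{p^*}=\frac{1}{n}+\frac{1}{q}-\frac{1}{p}$ delivers \eqref{eq:classical-sp-case1}.

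For the second case $p\ge n$, $q<\infty$, I would reduce to the first case. Set $p_0:=\frac{nq}{n+q}$, so that $p_0<n$ and $p_0^*=q$; in particular $p_0\le q=p_0^*$, so case~1 applied to the pair $(p_0,q)$ gives
\[
\|f-f_D\|_{L^q(D)} \le C(n,p_0,\lambda)\,|D|^{\frac{1}{n}+\frac{1}{q}-\frac{1}{p_0}}\|\nabla f\|_{L^{p_0}(D)}.
\]
Since $p_0<n\le p$, a further application of H\"older's inequality yields $\|\nabla f\|_{L^{p_0}(D)}\le |D|^{\frac{1}{p_0}-\frac{1}{p}}\|\nabla f\|_{L^p(D)}$. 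Combining the two estimates gives \eqref{eq:classical-sp-case2} with a constant depending only on $(n,q,\lambda)$, since $p_0$ is an explicit function of $n$ and $q$.

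The only real difficulty, and it is minor, is confirming the scale-invariance of the constant in the critical case so that no $|D|$ factor is needed there. This rests on the observation that the John parameter $\lambda$ is preserved under dilations $x\mapsto rx$, after which the scalings of the $L^{p^*}$ and $L^p$ norms together with the Lebesgue measure cancel exactly. Everything else is bookkeeping: one application of H\"older's inequality on the left to pass from $p^*$ to $q$ in case 1, and one on the right to pass from $p$ to $p_0$ in case 2.
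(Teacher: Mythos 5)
Your proof is correct. Note that the paper does not actually prove this lemma: it is stated as a recalled result, with the proof outsourced to the cited references (Lemma 2.1 of \cite{HH} and Bojarski \cite{MR0982072}), so there is no internal argument to compare against line by line. Your reduction is the standard one and the bookkeeping checks out: the critical inequality $\|f-f_D\|_{L^{p^*}(D)}\le C(n,p,\lambda)\|\nabla f\|_{L^p(D)}$ is dilation invariant (the John parameter $\lambda$ in Definition~\ref{definition:john} is preserved under $x\mapsto rx$, and $n/p^*=n/p-1$ makes the two sides scale identically), H\"older on the left with exponent $p^*/q$ produces exactly the factor $|D|^{\frac1q-\frac1{p^*}}=|D|^{\frac1n+\frac1q-\frac1p}$, and in the case $p\ge n$ the choice $p_0=\frac{nq}{n+q}$ together with H\"older on the gradient gives a constant depending only on $(n,q,\lambda)$ since $p_0$ is determined by $n$ and $q$. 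This is in the same spirit as what the paper itself does later when it needs quantitative control: Remark~\ref{remark:constant-qbar} lowers the exponent on the left by H\"older exactly as you do, and the end of the proof of Lemma~\ref{lemma:constant-sp-case1} derives the subcritical case from the critical one by choosing $s$ with $s^*=q$, which is the cube analogue of your case-2 reduction. The only points worth making explicit in a written version are the minor ones you implicitly use: $W^{1,p}(D)\subset W^{1,p_0}(D)$ on the bounded domain $D$ so that case 1 applies in case 2, and the density/approximation step needed to pass from locally Lipschitz functions (as in Martio--Bojarski) to general $f\in W^{1,p}(D)$.
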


For our proof below we need better control over the constants in~\eqref{eq:classical-sp-case1} and~\eqref{eq:classical-sp-case2}, since in the variable exponent setting these constants will depend on the local values of $\pp$ and $\qq$.  The critical situation is when $p_-<n<p_+$, since in that case there may be cubes $U_t$ on which $p_-<n$ is very close to $n$, and the constant in \eqref{eq:classical-sp-case1} tends to infinity as $p$ approaches $n$ and $q$ approaches $p^*$.  There are similar concerns if $p_->n$ but is close to $n$.
In the latter case, we can choose a uniform constant in~\eqref{eq:classical-sp-case2}, as the following remark shows.

\begin{remark}\label{remark:constant-qbar}
In the case $p\ge n$ the constant can be taken depending only on $\bar{q}$ for any $\bar{q}\ge q$. Indeed, applying the H\"older inequality and \eqref{eq:classical-sp-case2} we obtain
\begin{align*}
\left( \int_Q |f(x)-f_Q|^q \dx\right)^\frac{1}{q} &\le |Q|^{\frac{1}{q}-\frac{1}{\bar{q}}} \left( \int_Q |f(x)-f_Q|^{\bar{q}}\right)^\frac{1}{\bar{q}}\\
&\le C(n,\bar{q})|Q|^{\frac{1}{q}-\frac{1}{\bar{q}}}|Q|^{\frac{1}{n}+\frac{1}{\bar{q}}-\frac{1}{p}} \left(\int_Q |\nabla f(x)|^p\dx\right)^\frac{1}{p} \\
&= C(n,\bar{q}) |Q|^{\frac{1}{n}+\frac{1}{q}-\frac{1}{p}} \left(\int_Q |\nabla f(x)|^p \dx\right)^\frac{1}{p}.
\end{align*}
\end{remark}

In the critical case when $p<n$, we can give a quantitative estimate on the constant in~\eqref{eq:classical-sp-case1}.  The proof of the following result is implicit in~\cite{Hajlasz2001}; here we give the details to make explicit the resulting constants.

\begin{lemma}\label{lemma:constant-sp-case1}
Let $Q\subset \R^n$ be a cube. If $1\le p<n$ and $p\le q\le p^*$, then
\begin{equation} \label{eqn:case1a}
|f-f_Q\|_{L^q(Q)} \le C(n)q |Q|^{\frac{1}{n}+\frac{1}{q}-\frac{1}{p}}\|\nabla f\|_{L^p(Q)}.
\end{equation}
\end{lemma}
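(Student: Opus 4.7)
The plan is to prove the estimate in two stages: first establish the critical case $q = p^*$ with constant $C(n) p^*$, and then for general $p \le q \le p^*$ reduce to the critical case at a shifted exponent.

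For the critical case, I would extend $f - f_Q$ by reflection to a doubled cube $\tilde{Q}$ and multiply by a cutoff $\eta$ with $\eta \equiv 1$ on $Q$, $\mathrm{supp}\,\eta \subset \tilde{Q}$, and $|\nabla \eta| \le C_n |Q|^{-1/n}$; invoking the $L^p$-Poincaré inequality (whose constant depends only on $n$) produces a compactly supported function with $\|\nabla(\tilde f\eta)\|_{L^p(\tilde Q)} \le C(n)\|\nabla f\|_{L^p(Q)}$. It then suffices to prove the Sobolev inequality $\|g\|_{L^{p^*}(\R^n)} \le C(n) p^* \|\nabla g\|_{L^p(\R^n)}$ for $g \in C_c^\infty(\R^n)$. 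This in turn follows by a Moser iteration from the Gagliardo--Nirenberg inequality $\|g\|_{L^{n'}} \le C(n)\|\nabla g\|_{L^1}$: applying it to $|g|^\gamma$ with $\gamma := p(n-1)/(n-p) = p^*/n'$, using $|\nabla|g|^\gamma| \le \gamma |g|^{\gamma-1}|\nabla g|$, and invoking Hölder's inequality with exponents $p$ and $p'$ yields
\[
\|g\|_{L^{p^*}}^\gamma \le C(n)\gamma \|\nabla g\|_{L^p}\|g\|_{L^{p^*}}^{\gamma - 1},
\]
where the choice of $\gamma$ is dictated by the closure condition $(\gamma-1)p' = \gamma n' = p^*$.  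Dividing by $\|g\|_{L^{p^*}}^{\gamma-1}$ and using $\gamma = p^*(n-1)/n \le p^*$ gives the critical estimate with constant $C(n) p^*$.

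For a general $q$ in the range $[\max(p, n'), p^*]$, I would set $\tilde p := nq/(n+q)$, so that $\tilde p \le p$ and $\tilde p^{\,*} = q$.  Applying the critical Sobolev--Poincaré at exponent $\tilde p$ gives $\|f-f_Q\|_{L^q(Q)} \le C(n) q\,\|\nabla f\|_{L^{\tilde p}(Q)}$, and Hölder's inequality in $L^p$ supplies the factor $|Q|^{1/\tilde p - 1/p}$; a direct computation shows $1/\tilde p - 1/p = 1/n + 1/q - 1/p$, matching the desired form.  For $q \in [p, n')$, which forces $p < n'$, I would instead apply the $L^1$-Sobolev--Poincaré from Lemma~\ref{lemma:classical-sp} and Hölder directly, obtaining constant $C(n) \le C(n) q$.

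The main obstacle is obtaining the \emph{linear} dependence on $q$ rather than a weaker power: the natural Young-type bound for the Riesz potential $I_1$ produces a constant that behaves like $(p^*-q)^{-1/n'}$ near the critical endpoint, which is not linear in $q$.  The Moser iteration circumvents this by producing the $\gamma$-factor directly, but requires verifying that the choice $\gamma = p(n-1)/(n-p)$ exactly closes the iteration loop and that the reflection/cutoff step does not accumulate hidden $p$- or $q$-dependent constants.  A minor delicacy is the endpoint $p = 1$, where $\gamma = 1$ and no iteration is needed, so the argument must be interpreted as the base case Gagliardo--Nirenberg inequality itself.
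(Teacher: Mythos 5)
Your argument is correct, and its engine is the same as the paper's: the linear-in-$q$ constant comes from applying the endpoint $L^1\to L^{n/(n-1)}$ inequality to $|g|^\gamma$ with $\gamma=p(n-1)/(n-p)$, using $(\gamma-1)p'=\gamma n'=p^*$ to close the estimate and absorb $\|g\|_{L^{p^*}}^{\gamma-1}$, and then the subcritical range $p\le q<p^*$ is handled exactly as in the paper, by choosing the exponent $s$ (your $\tilde p=nq/(n+q)$) with $s^*=q$ (or $s=1$ when $q<1^*$) and applying H\"older to produce $|Q|^{\frac1n+\frac1q-\frac1p}$.

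Where you genuinely diverge is the localization. The paper stays on the cube: it replaces $f_Q$ by a median $b$ of $f$, derives the endpoint case $\|f-b\|_{L^{n/(n-1)}(Q)}\le C(n)\|\nabla f\|_{L^1(Q)}$ from the subrepresentation formula $|f-f_Q|\lesssim I_1^Q(\nabla f)$ together with the weak-type $(1,n/(n-1))$ bound for $I_1^Q$ and the equivalence of weak and strong Poincar\'e inequalities (Haj\l asz's Corollary~6, Theorem~10, Lemma~11), and the median is what makes the power trick legitimate, since $g=\pm|f-b|^\gamma$ again has median zero. You instead reduce to the global Sobolev inequality on $\R^n$ for compactly supported functions via reflection to a doubled cube plus a cutoff, paying for the cutoff term with the diagonal $L^p$ Poincar\'e inequality on $Q$; this avoids medians and the weak-type machinery entirely, at the price of the one point you rightly flag: the $L^p\to L^p$ Poincar\'e constant on a cube must be uniform in $p$, which indeed holds (the kernel $|x|^{1-n}\chi_{\{|x|\le \operatorname{diam} Q\}}$ has $L^1$ norm $\simeq \ell(Q)$, so Young's inequality gives a constant depending only on $n$), and the reflection constants are purely dimensional. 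Both routes land on the same constant $C(n)q|Q|^{\frac1n+\frac1q-\frac1p}$; the paper's intrinsic argument is closer to the references it cites and avoids any extension step, while yours is more self-contained on the $\R^n$ side but needs the (true, standard) $p$-uniformity of the local Poincar\'e constant as an extra verification.
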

\begin{proof}
First observe that by H\"older's inequality we have that
\[\|f-f_Q\|_{L^q(Q)} \le 2 \inf_{b\in \R} \left(\int_Q |f(x)-b|^q\dx\right)^\frac{1}{q}; \]
hence, it is enough to prove \eqref{eqn:case1a} with a constant $b$ in place of $f_Q$. 

We will take  $b$ to be the median of $f$ on $Q$, i.e., a possibly non-unique $b$ such that
\begin{equation}\label{eq:b}
    |\{x\in Q:\, f(x)\ge b\}|\ge |Q|/2\quad \text{ and } \quad |\{x\in Q:\, f(x)\le b\}|\ge |Q|/2.
\end{equation}

By~\cite[Corollary 6]{Hajlasz2001},  the inequality 
\begin{equation} \label{eqn:case1b}
\|f-b\|_{L^q(Q)} \le C_2 \|\nabla f\|_{L^p(Q)}
\end{equation}
is equivalent to 
\[\sup_{t>0} |\{x\in\O:\,|f(x) - b| > t\}|t^q \le C_1 \left(\int_\Omega |\nabla f(x)|^p \dx\right)^\frac{q}{p}. \]
Moreover, if we define
$$I_1^\O g(x) = \int_\O g(z)|x-z|^{1-n}\dz,$$
then by \cite[Theorem 10]{Hajlasz2001} we have that
$|g(x) -g_\Omega| \le C I_1^\O(\nabla g)(x)$. Finally, by \cite[Lemma~11]{Hajlasz2001},
\[\sup_{t>0}|\{x\in\O:\,I_1^\O g(x)>t\}|t^{n/(n-1)} \le C(n)\left(\int_\O |g(x)|\dx\right)^{n/(n-1)}.\]
If we combine these three estimates, we get
\begin{equation} \label{eqn:case1c}
\left(\int_Q |f(x)-b|^\frac{n}{n-1}\dx\right)^\frac{n-1}{n} \le C(n) \int_Q |\nabla f(x)|\dx,
\end{equation}
%}
This is inequality~\eqref{eqn:case1b} when $p=1$ and $q=1^*$. 

In \cite[Theorem 8]{Hajlasz2001}  inequality~\eqref{eqn:case1b} for $1<p<n$ and $q=p^*$ is derived from the case $p=1$. We reproduce their argument to keep track of the constant. Fix $b\in\R$ as in \eqref{eq:b} and let $Q_+=\{x\in Q:\, f(x)\ge b\}$ and $Q_-=\{x\in Q:\, f(x)\le b\}$.  Let $\gamma=p(n-1)/(n-p)$ and define
\[g(x) = \begin{cases}
                    |f(x)-b|^\gamma,  & x\in Q_+,\\
                    -|f(x)-b|^\gamma, & x\in Q_-.
                \end{cases} \]
Then $|g|^\frac{n}{n-1} = |f-b|^{p^*}$, and $g$ satisfies \eqref{eq:b} with $b=0$. Therefore,  inequality~\eqref{eqn:case1c} applied to $g$ and H\"older's inequality, we get
\begin{align*}
\left(\int_Q |f(x)-b|^{p^*} \dx\right)^\frac{n-1}{n} &= \left(\int_Q|g(x)|^\frac{n}{n-1} \dx\right)^\frac{n-1}{n} \\
&\le C(n) \int_Q |\nabla g(x)|\dx \\
&\le C(n) \int_Q \gamma |f(x)-b|^{\gamma-1}|\nabla f(x)| \dx \\
&\le C(n) \gamma \int_Q |f(x)-b|^{\frac{n(p-1)}{n-p}}|\nabla f(x)|\dx \\
&\le C(n)\frac{p(n-1)}{n-p} \left(\int_Q |f(x)-b|^{p^*} \dx\right)^{\frac{1}{p'}}\left(\int_Q |\nabla f(x)|^p\right)^\frac{1}{p}.
\end{align*}
If we rearrange terms, we get
\begin{equation} \label{eqn:case1d}
    \|f-b\|_{L^{p^*}(Q)} \le C(n)\frac{p(n-1)}{n-p} \|\nabla f\|_{L^p(Q)} \leq C(n)p^* \|\nabla f\|_{L^p(Q)}.
\end{equation}

Finally, to prove inequality~\eqref{eqn:case1a} with $q<p^*$ we follow the argument in \cite[Lemma~2.1]{HH}. Fix  $s$ such that $s^*=q$, (or $s=1$ if $q<1^*$); then by inequality~\eqref{eqn:case1d} with exponents $s^*$ and $s$ and  H\"older's inequality,
\begin{multline*}
\left(\int_Q |f(x)-f_Q|^q\dx\right)^\frac{1}{q} 
\le C(n) \frac{s(n-1)}{n-s} \left(\int_Q |\nabla f(x)|^s\dx\right)^\frac{1}{s} \\
\le C(n) \frac{s(n-1)}{n-s} \left(\int_Q |\nabla f(x)|^p\dx\right)^\frac{1}{p} |Q|^{\frac{1}{s}-\frac{1}{p}} 
\le C(n)s^*|Q|^{\frac{1}{n}+\frac{1}{q}-\frac{1}{p}} \|\nabla f\|_{L^p(Q)}.
\end{multline*}
This completes the proof.
\end{proof}

The following result was proved in \cite[Theorem~2.2]{HH} when $\qq=\pp$ and our proof is adapted from theirs.  This result  is the key for translating the classical Sobolev-Poincaré inequality to a variable exponent setting, with minimal hypotheses on the exponents. 

\begin{lemma}\label{lemma:local-sp}
Given a cube  $Q\subset\R^n$, let $\pp \in\Pp(Q)$ and suppose $1\leq p_-(Q)\leq p_+(Q) < \infty$.  Define $\qq$ by \eqref{eq:def q-intro}. If either $p_-(Q)<n$ and $p_-(Q)\le q_+(Q)\le (p_-(Q))^*$, or $p_-(Q)\ge n$ and $q_+(Q)<\infty$, then 
\begin{equation}\label{eq:local-sp}
\|f-f_Q\|_{L^\qq(Q)}
\le C(n,q_+(Q))(1+|Q|)^2 |Q|^{\frac{1}{n}+\frac{1}{q_+(Q)}-\frac{1}{p_-(Q)}}
\|\nabla f\|_{\Lp(Q)},
\end{equation}
Moreover, if $p_-(Q)<n$, then $C(n,q_+(Q)) = C(n)q_+(Q)$. 
\end{lemma}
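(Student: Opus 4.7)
The proof proceeds by reducing the variable-exponent Sobolev-Poincaré inequality to its constant-exponent counterpart via the monotonicity embedding in Lemma~\ref{lemma:pp-qq-imbed}. The idea is to replace the left-hand variable norm with a constant-exponent norm with the largest exponent $q_+(Q)$, apply the classical inequality from Lemmas~\ref{lemma:classical-sp}~and~\ref{lemma:constant-sp-case1} on the cube $Q$ (a John domain with parameter depending only on $n$) using the pair $(p_-(Q), q_+(Q))$, and then control the resulting right-hand constant-exponent norm by the variable-exponent norm using the opposite embedding.

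Concretely, first I would invoke Lemma~\ref{lemma:pp-qq-imbed} twice: since $q(x) \le q_+(Q)$ pointwise on $Q$, we get
\[
\|f - f_Q\|_{L^\qq(Q)} \le (1 + |Q|)\, \|f - f_Q\|_{L^{q_+(Q)}(Q)},
\]
and since $p_-(Q) \le p(x)$ pointwise on $Q$, we get
\[
\|\nabla f\|_{L^{p_-(Q)}(Q)} \le (1 + |Q|)\, \|\nabla f\|_{L^\pp(Q)}.
\]
This accounts for the factor $(1+|Q|)^2$ in the conclusion.

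Next I would apply a constant-exponent Sobolev-Poincar\'e inequality on $Q$ with $p = p_-(Q)$ and $q = q_+(Q)$. We must verify $p \le q \le p^*$ in the subcritical range and $q < \infty$ in the supercritical range. Since $\alpha \ge 0$ in \eqref{eq:def q-intro}, $q(x) \ge p(x)$ pointwise, so $q_+(Q) \ge p_-(Q)$; the upper constraint $q_+(Q) \le (p_-(Q))^*$ when $p_-(Q) < n$, and the finiteness of $q_+(Q)$ when $p_-(Q) \ge n$, are exactly the hypotheses assumed. In the case $p_-(Q) < n$ I would use Lemma~\ref{lemma:constant-sp-case1}, which gives the linear dependence $C(n)\, q_+(Q)$ on the exponent; in the case $p_-(Q) \ge n$ I would use Lemma~\ref{lemma:classical-sp} together with Remark~\ref{remark:constant-qbar}, taking $\bar q = q_+(Q)$, so that the constant depends only on $n$ and $q_+(Q)$. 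In both cases the geometric factor is
\[
|Q|^{\frac{1}{n} + \frac{1}{q_+(Q)} - \frac{1}{p_-(Q)}}.
\]
Concatenating the three estimates yields \eqref{eq:local-sp} with the claimed constant.

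The proof is essentially bookkeeping, and the only mildly delicate point is the separation into the two regimes $p_-(Q) < n$ and $p_-(Q) \ge n$, which must be handled so that the final constant $C(n, q_+(Q))$ is tracked honestly — in particular, the explicit linear growth in $q_+(Q)$ in the subcritical case comes exclusively from Lemma~\ref{lemma:constant-sp-case1}, and the uniformity over $p \in [n, p_+(Q)]$ in the supercritical case relies on Remark~\ref{remark:constant-qbar}. No use of the $\tauLH$ condition or of any structure beyond the embedding lemma is needed at this local level, which is precisely what makes the subsequent local-to-global machinery work under such weak regularity assumptions on $\pp$.
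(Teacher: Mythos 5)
Your proposal is correct and follows essentially the same argument as the paper: two applications of Lemma~\ref{lemma:pp-qq-imbed} to pass to the constant exponents $q_+(Q)$ and $p_-(Q)$, then the constant-exponent inequality — Lemma~\ref{lemma:constant-sp-case1} when $p_-(Q)<n$, and \eqref{eq:classical-sp-case2} when $p_-(Q)\ge n$ — applied with the pair $(p_-(Q),q_+(Q))$. The only cosmetic difference is that the paper applies \eqref{eq:classical-sp-case2} directly with $q=q_+(Q)$ rather than routing through Remark~\ref{remark:constant-qbar}, which changes nothing in the constant.
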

\begin{proof}
We first consider the case  $p_-(Q)<n$.  By Lemma~\ref{lemma:pp-qq-imbed} (applied twice)  and by Lemma~\ref{lemma:constant-sp-case1}, we have that
\begin{align*}
    \|f-f_Q\|_\qq &\le (1+|Q|)\|f-f_Q\|_{q_+(Q)}\le C(n)q_+(Q)(1+|Q|)|Q|^{\frac{1}{n}+\frac{1}{q_+(Q)}-\frac{1}{p_-(Q)}}\|\nabla f \|_{p_-(Q)} \\
    &\le C(n)q_+(Q)(1+|Q|)^2 |Q|^{\frac{1}{n}+\frac{1}{q_+(Q)}-\frac{1}{p_-(Q)}}\|\nabla f\|_\pp.
\end{align*}
The proof for $p_-(Q)\geq n$ is the same, but using  \eqref{eq:classical-sp-case2} instead of Lemma~\ref{lemma:constant-sp-case1}.
\end{proof}

Our goal is to apply this inequality to the cubes of a tree-covering of a larger John domain. We can do so directly when the oscillation of $\pp$ on a cube is under control.  

\begin{lemma}\label{lemma:local-sp-direct}
Let $\Omega\subset \R^n$ be a bounded John domain with a tree covering $\{U_t\}_{t\in\Gamma}$. Given $\pp\in \Pp(\O)$, suppose $1\leq p_-(\Omega)\leq p_+(\Omega)<\infty$ and $\pp\in\partial LH_0^{\tau_K}(\O)$.  Define $\qq\in \Pp(\Omega)$ by \eqref{eq:def q-intro}. If $U\in\{U_t\}_{t\in\Gamma}$ is such that $p_-(U)<n$ and $p_-(U)\le q_+(U)\le p_-(U)^*$ or $p_-(U)\ge n$, then
\begin{equation}\label{eq:local-sp-direct}
\|f-f_U\|_{L^\qq(U)} 
\le C(\Omega,n,C_0,q_+(U)) |U|^{\frac{1-\alpha}{n}}\|\nabla f\|_{\Lp(U)}. 
\end{equation}
The constant $C_0$ is from Definition~\ref{tauLHB} and depends on $\pp$. In the first case, the constant can be taken  to be  $C(\Omega,n, C_0)q_+(U)$.
\end{lemma}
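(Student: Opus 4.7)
The plan is to deduce this directly from the local Sobolev--Poincar\'e inequality on a single cube, Lemma~\ref{lemma:local-sp}, and then to use the $\partial LH_0^{\tau_K}$ hypothesis together with the tree-covering structure to absorb the ``bad'' part of the exponent of $|U|$ into a constant.

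First, I would verify the hypotheses of Lemma~\ref{lemma:local-sp} are satisfied on $U$. The case $p_-(U)<n$ with $p_-(U) \leq q_+(U)\leq p_-(U)^*$ is assumed, so there is nothing to check. In the case $p_-(U)\geq n$ I need $q_+(U)<\infty$. Since $\tfrac{1}{q(x)} = \tfrac{1}{p(x)} - \tfrac{\alpha}{n}$, this holds as long as $\alpha < n/p_+(U)$. If $p_+(\Omega)<n$ then $\alpha<1 < n/p_+(U)$, and if $p_+(\Omega)\geq n$ then $\alpha < n/p_+(\Omega)\leq n/p_+(U)$ by~\eqref{eq:alpha-intro}; so $q_+(U)<\infty$ in both cases.

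Next, the exponent comparison. Using $\tfrac{1}{q_+(U)} = \tfrac{1}{p_+(U)}-\tfrac{\alpha}{n}$ (because $\qq$ is pointwise increasing in $\pp$), a direct calculation gives
\begin{equation*}
\frac{1}{n} + \frac{1}{q_+(U)} - \frac{1}{p_-(U)}
= \frac{1-\alpha}{n} - \frac{p_+(U)-p_-(U)}{p_+(U)\,p_-(U)}.
\end{equation*}
Hence Lemma~\ref{lemma:local-sp} yields
\begin{equation*}
\|f-f_U\|_{L^\qq(U)}
\leq C(n,q_+(U))(1+|U|)^2\,|U|^{\frac{1-\alpha}{n}}\,|U|^{-\frac{p_+(U)-p_-(U)}{p_+(U)p_-(U)}}\,\|\nabla f\|_{\Lp(U)}.
\end{equation*}

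The main point is then to show that $|U|^{-(p_+(U)-p_-(U))/(p_+(U)p_-(U))}$ is uniformly bounded in $U$. This is exactly where the boundary log-H\"older hypothesis enters: by Remark~\ref{remark:tree-covering-john} (the tree-covering version of Lemma~\ref{lemma:LHBequiv} applied to Whitney cubes in a John domain), $\pp\in \partial LH_0^{\tau_K}(\Omega)$ gives $|U|^{-(p_+(U)-p_-(U))}\leq C(C_0,n,\tau_K)$, and then since $p_+(U)p_-(U)\geq p_-(\Omega)^2 \geq 1$, raising both sides to the power $1/(p_+(U)p_-(U))$ preserves the bound (with a constant depending on $p_\pm$). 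Combined with $(1+|U|)^2\leq (1+|\Omega|)^2$ and the constant $C(n,q_+(U))$ from Lemma~\ref{lemma:local-sp}, this yields~\eqref{eq:local-sp-direct}. In the case $p_-(U)<n$, Lemma~\ref{lemma:local-sp} specifies that $C(n,q_+(U)) = C(n)\,q_+(U)$, so the final constant is of the form $C(\Omega,n,C_0)\,q_+(U)$, as claimed.

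There is no real obstacle here; the only subtlety is keeping the dependence of the constant on $q_+(U)$ explicit in the subcritical case, since this dependence will matter later when summing local estimates against the Whitney decomposition.
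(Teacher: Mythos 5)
Your proposal is correct and follows essentially the same route as the paper: apply Lemma~\ref{lemma:local-sp}, use the pointwise relation \eqref{eq:def q-intro} to rewrite the exponent $\frac{1}{n}+\frac{1}{q_+(U)}-\frac{1}{p_-(U)}$ as $\frac{1-\alpha}{n}+\frac{1}{p_+(U)}-\frac{1}{p_-(U)}$, and then absorb the factor $|U|^{\frac{1}{p_+(U)}-\frac{1}{p_-(U)}}$ via the $\partial LH_0^{\tau_K}$ condition through Lemma~\ref{lemma:LHBequiv} (in the form \eqref{LHBUt} for the cubes $U_t$), keeping the constant $C(n)q_+(U)$ explicit in the subcritical case. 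The only difference is that you spell out the verification that $q_+(U)<\infty$ and the elementary step of raising the bound to the power $1/(p_+(U)p_-(U))$, which the paper leaves implicit.
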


\begin{proof}
This is an immediate consequence of Lemma \ref{lemma:local-sp}. By inequality~\eqref{eq:local-sp} and Lemma \ref{lemma:LHBequiv}, and since  $|Q|\le |\Omega|$, we have that
\begin{align*}
    \|f-f_U\|_\qq 
    & \le C(n,q_+(U))(1+|Q|)^2 |U|^{\frac{1}{n}+\frac{1}{q_+(U)}-\frac{1}{p_-(U)}}\|\nabla f\|_\pp \\
    &= C(n,q_+(U)(1+|Q|)^2 |U|^{\frac{1-\alpha}{n}+\frac{1}{p_+(U)}-\frac{1}{p_-(U)}}\|\nabla f\|_\pp \\
    & \leq C(\Omega,n,C_0,q_+(U))|U|^{\frac{1-\alpha}{n}}\|\nabla f\|_\pp.
\end{align*}
\end{proof}

We do not want to assume, however, that the restriction on the oscillation of $\pp$ in Lemma~\ref{lemma:local-sp-direct} holds.  Indeed, we want to consider exponents $\pp$ such that $p_-(U)<n$ but $q_+(U)>p_-(U)^*$. To handle this situation, we prove an extension result, which shows that a  Sobolev-Poincaré inequality can be obtained for a domain $U$ if it holds for every set in a finite partition of $U$. This result is similar to~\cite[Theorem 2.6]{HH}; however, rather than work in the full generality of this theorem, we concentrate on  the particular case where $U$ is an element of a tree-decomposition of a larger John domain $\Omega$.  This allows us to obtain a sharper estimate on the constant. 

\begin{lemma}\label{lemma:sp-cube-extension}
Let $\O\subset\R^n$ a bounded John domain with a tree-covering $\{U_t\}_{t\in\Gamma}$.  Suppose $\pp\in\Pp(\O)$ satisfies $1\leq p_-(\Omega) \leq p_+(\Omega)<\infty$ and $\pp\in\partial LH_0^{\tau_K}(\O)$. Define  $\qq\in \Pp(\Omega)$ by~\eqref{eq:def q-intro}. For a fixed cube $U\in \{U_t\}_{t\in\Gamma}$, suppose that there exist cubes $G_i$ $i=1,\dots,M$, such that $G_i\subset U$ for every $i$, $U=\cup_{i=1}^M G_i$ and either $p_-(G_i)<n$ and  $p_-(G_i)\le q_+(G_i)\le (p_-(G_i))^*$, or $p_-(G_i)\ge n$. Thus Lemma \ref{lemma:local-sp} holds on each $G_i$; let $C_i$ be the constant for this inequality on $G_i$. Then there exists a constant $C>0$ such that for every $f\in W^{1,\pp}(U)$,
\[\|f-f_{U}\|_{L^\qq(U)} 
\le C|U|^{\frac{1-\alpha}{n}}\|\nabla f\|_{\Lp(U)}. \]
The constant $C$ depends on $n$, $M$, the ratio $|U|/|G_i|$, and $\max_i\{C_i\}$. 
\end{lemma}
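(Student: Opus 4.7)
The plan is to split the global norm into contributions from the $G_i$, apply Lemma~\ref{lemma:local-sp} on each $G_i$, and close the estimate by controlling the discrepancy between $f_{G_i}$ and $f_U$. Because $U=\bigcup_{i=1}^M G_i$, the triangle inequality gives
\[
\|f-f_U\|_{L^\qq(U)} \le \sum_{i=1}^M \|f-f_{G_i}\|_{L^\qq(G_i)} \;+\; \sum_{i=1}^M |f_{G_i}-f_U|\,\|\chi_{G_i}\|_{L^\qq},
\]
and I will bound each sum by $C\,|U|^{(1-\alpha)/n}\|\nabla f\|_{L^\pp(U)}$.

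For the first sum, the hypothesis supplies, for each $i$,
\[
\|f-f_{G_i}\|_{L^\qq(G_i)} \le C_i\,|G_i|^{\frac{1}{n}+\frac{1}{q_+(G_i)}-\frac{1}{p_-(G_i)}}\|\nabla f\|_{L^\pp(G_i)}.
\]
Using~\eqref{eq:def q-intro} I rewrite the exponent as $(1-\alpha)/n + \bigl(1/p_+(G_i)-1/p_-(G_i)\bigr)$. Since $G_i\subset U\subset B_{x_U,\tau_K}$, the assumption $\pp\in\partial LH_0^{\tau_K}(\O)$ together with Lemma~\ref{lemma:LHBequiv} applied to $B_{x_U,\tau_K}$ shows that $|G_i|^{1/p_+(G_i)-1/p_-(G_i)}$ is bounded by a constant depending only on $n$, $C_0$, and the ratio $|U|/|G_i|$. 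With $|G_i|\le|U|$ and $\|\nabla f\|_{L^\pp(G_i)}\le\|\nabla f\|_{L^\pp(U)}$, the first sum is at most $C\,M\,\max_i C_i\,|U|^{(1-\alpha)/n}\|\nabla f\|_{L^\pp(U)}$.

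For the correction term I cannot pair $|f_{G_i}-f_U|$ with $\|f-f_U\|_{L^\qq(G_i)}$ via H\"older, since this yields a circular estimate. Instead I pass through an $L^1$ computation:
\[
|f_{G_i}-f_U| \le \frac{1}{|G_i|}\int_{G_i}|f-f_U|\dx \le \frac{|U|}{|G_i|}\cdot\frac{1}{|U|}\int_U|f-f_U|\dx,
\]
and apply the classical $L^1$ Poincar\'e inequality on the cube $U$, followed by the variable-exponent H\"older inequality~\eqref{eqn:holder}, to obtain
\[
|f_{G_i}-f_U| \le C\,\frac{|U|}{|G_i|}\,|U|^{1/n-1}\,\|\nabla f\|_{L^\pp(U)}\,\|\chi_U\|_{L^\cpp}.
\]
Multiplying by $\|\chi_{G_i}\|_{L^\qq}\le\|\chi_U\|_{L^\qq}$ and invoking the off-diagonal $K_0$ bound~\eqref{eqn:offdiag-K0-1} of Lemma~\ref{lemma:offdiag-partialK0} (which applies since $U\in\{U_t\}_{t\in\Gamma}$) to absorb $\|\chi_U\|_{L^\qq}\|\chi_U\|_{L^\cpp}\le C|U|^{1-\alpha/n}$, I obtain the bound $C\,(|U|/|G_i|)\,|U|^{(1-\alpha)/n}\,\|\nabla f\|_{L^\pp(U)}$ for each $i$. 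Summing over $i$ and combining with the first sum yields the claimed inequality, with constant depending on $n$, $M$, $\max_i|U|/|G_i|$, and $\max_i C_i$ (the $\partial LH_0^{\tau_K}$ constants being absorbed into the ambient framework).

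The main obstacle is exactly the circularity noted in the correction step: the local Sobolev-Poincar\'e inequality lives in $L^\qq$ while averages are defined via $L^1$, so comparing $f_{G_i}$ with $f_U$ cannot be closed inside the $L^\qq$ scale. The escape is to pay a factor of $|U|/|G_i|$ (which explains the appearance of this ratio in the final constant) and exploit that $U$ is itself a cube, so the classical unweighted $L^1$ Poincar\'e inequality is available with a universal constant; the remaining algebra of exponents is then closed by the off-diagonal $K_0$ and log-H\"older technology developed in Section~\ref{section:var-Hardy}.
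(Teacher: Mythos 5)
Your proposal is correct and follows essentially the same route as the paper: the same triangle-inequality split over the $G_i$, Lemma~\ref{lemma:local-sp} on each piece with the oscillation factor absorbed via the $\partial LH_0^{\tau_K}$ condition and Lemma~\ref{lemma:LHBequiv}, and the average-discrepancy term handled exactly as in the paper by the classical $L^1$ Poincar\'e inequality on $U$, H\"older's inequality~\eqref{eqn:holder}, and the off-diagonal bound~\eqref{eqn:offdiag-K0-1}. The only difference is the cosmetic choice of applying the log-H\"older estimate on $G_i$ (through the ball $B_{x_U,\tau_K}$) rather than on $U$ itself, which does not change the argument.
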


\begin{proof}
We first apply the triangle inequality to get
\begin{align*}
    \|(f-f_U)\chi_U\|_\qq &\le \sum_{i=1}^M \|\chi_{G_i}(f-f_U)\|_\qq \\
    &\le \sum_{i=1}^M \|\chi_{G_i}(f-f_{G_i})\|_\qq + \sum_{i=1}^M \|\chi_{G_i}(f_U-f_{G_i})\|_\qq.
\end{align*}   
To estimate the first term,  we apply Lemma~\ref{lemma:local-sp} and  use the fact that $\frac{1}{n}+\frac{1}{q_+(G_i)}-\frac{1}{p_-(G_i)}\ge 0$, $U$ is bounded, and that $\frac{1}{q_+(U)}-\frac{1}{p_-(U)}\le \frac{1}{q_+(G_i)}-\frac{1}{p_-(G_i)}$:
\begin{align*}
\|\chi_{G_i}(f-f_{G_i})\|_\qq &\le C_i|G_i|^{\frac{1}{n}+\frac{1}{q_+(G_i)}-\frac{1}{p_-(G_i)}} \|\chi_{G_i}\nabla f\|_\pp \\
&\le C_i|U|^{\frac{1}{n}+\frac{1}{q_+(G_i)}-\frac{1}{p_-(G_i)}} \|\nabla f\|_\pp \\
&\le C_i|U|^{\frac{1}{n}+\frac{1}{q_+(U)}-\frac{1}{p_-(U)}} \|\nabla f\|_\pp;
\intertext{by \eqref{eq:def q-intro} and Lemma \ref{lemma:LHBequiv},}
     &\le C_i|U|^{\frac{1-\alpha}{n} +\frac{1}{p_+(U_t)}-\frac{1}{p_-(U_t)}} \|\nabla f\|_\pp\\
    &\le C_i|U|^{\frac{1-\alpha}{n}} \|\nabla f\|_\pp.
\end{align*}

To estimate the second term, we first note that
\[\|\chi_{G_i}(f_U-f_{G_i})\|_\qq = \|\chi_{G_i}\|_\qq |f_U-f_{G_i}| \le \|\chi_U\|_\qq |f_U-f_{G_i}|.\]
By the classical Poincaré inequality in $L^1(U)$ (see, for example,~\cite{AcostaDuran}), we have that
\begin{align*}
|f_U-f_{G_i}| 
& \le |G_i|^{-1}\int_{G_i} |f(x)-f_U|\dx \\
& \le |G_i|^{-1}\int_{U} |f(x)-f_U|\dx   \\
&\le C|G_i|^{-1}\textrm{diam}(U)\int_U |\nabla f(x)|\dx.
\intertext{By \eqref{eqn:holder} and since $\diam(U)\sim |U|^{\frac{1}{n}}$, we have} 
 &\le C|G_i|^{-1} |U|^\frac{1}{n} \|\chi_U\|_\cpp \|\nabla f\|_\pp.
\end{align*}
If we combine this with the previous estimate and apply inequality \eqref{eqn:offdiag-K0-1}, we get
\begin{align*}
\|\chi_{G_i}(f_U-f_{G_i})\|_\qq &\le C|G_i|^{-1}|U|^\frac{1}{n} \|\chi_U\|_\qq \|\chi_U\|_\cpp \|\nabla f\|_\pp \\
&\le C|G_i|^{-1}|U|^\frac{1}{n}|U|^{1-\frac{\alpha}{n}} \|\nabla f\|_\pp\\
&= C\frac{|U|}{|G_i|}|U|^{\frac{1-\alpha}{n}}\|\nabla f\|_\pp.
\end{align*}
This completes the proof.  
\end{proof}

In order to apply Lemma~\ref{lemma:sp-cube-extension},  we need to partition each $U_t$ into smaller sets $G_i$ such that one of the conditions in Lemma \ref{lemma:local-sp} holds on $G_i$. To do so, we will assume that the local oscillation of $\pp$ is under control.  In particular, we will assume that $\frac{1}{\pp}$ is $\frac{\sigma}{n}$ continuous for some $\sigma<1-\alpha$. 

\begin{lemma}\label{lemma:Ut-sp}
Let $\O\subset\R^n$ a bounded John domain with a tree-covering $\{U_t\}_{t\in\Gamma}$. Suppose $\pp\in\Pp(\O)$ satisfies $1<p_-(\O)\le p_+(\O)<\infty$ and $\pp \in \partial LH_0^{\tau_K}(\Omega)$.
Define $\qq\in\Pp(\Omega)$ by \eqref{eq:def q-intro}. If $\frac{1}{\pp}$ is uniformly $\frac{\sigma}{n}$-continuous for some $0<\sigma< 1-\alpha$, then there is a constant $C$, independent of $t$, such that
\begin{equation}\label{eq:local-lemma}
    \|f-f_{U_t}\|_{L^\qq(U_t)} 
    \le C |U_t|^{\frac{1-\alpha}{n}} \|\nabla f\|_{\Lp(U_t)},
\end{equation}
for every  $f\in W^{1,\pp}(U_t)$ and every $t\in\Gamma$. The constant $C$ depends on $n$, $p_+(\Omega)$, $\alpha$ and $\sigma$. In particular, it goes to infinity when $\alpha$ tends to $1$ or  when $\sigma$ tends to $1-\alpha$. 
\end{lemma}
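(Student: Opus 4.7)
The strategy is to subdivide each $U_t$ into finitely many equally-sized subcubes $\{G_i\}$ whose diameters are smaller than the uniform continuity modulus of $1/\pp$, apply Lemma~\ref{lemma:local-sp} on every $G_i$, and then combine the pieces via Lemma~\ref{lemma:sp-cube-extension}. To set this up, fix $\delta>0$ from the uniform $\sigma/n$-continuity of $1/\pp$, so that $|1/p(x)-1/p(y)|<\sigma/n$ whenever $x\in\Omega$ and $y\in B(x,\delta)\cap\Omega$. For each $t\in\Gamma$ let $N_t=\lceil \sqrt{n}\,\ell(U_t)/\delta\rceil+1$ and partition $U_t$ into $M=N_t^n$ congruent closed subcubes $G_i$. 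Since $\ell(U_t)\leq \diam(\Omega)$, both $M$ and the ratio $|U_t|/|G_i|=N_t^n$ are bounded by a constant depending only on $n$, $\diam(\Omega)$, and~$\delta$. In particular $\diam(G_i)<\delta$ for every $i$ and every $t$.

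\textbf{Checking the local hypotheses.} For $x,y\in G_i$ we have $|x-y|<\delta$, so
$$\frac{1}{p_-(G_i)}-\frac{1}{p_+(G_i)}<\frac{\sigma}{n}<\frac{1-\alpha}{n}.$$
A direct computation using~\eqref{eq:def q-intro} shows that this oscillation bound is exactly equivalent to $q_+(G_i)<p_-(G_i)^*$ on any $G_i$ with $p_-(G_i)<n$, while the companion bound $p_-(G_i)\leq q_+(G_i)$ is automatic because $\alpha\geq 0$ forces $q(x)\geq p(x)$. On the remaining subcubes, where $p_-(G_i)\geq n$, no oscillation condition is needed, and $q_+(G_i)\leq q_+(\Omega)<\infty$ because $\alpha p_+(\Omega)<n$ by~\eqref{eq:alpha-intro}. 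Hence Lemma~\ref{lemma:local-sp} applies on every $G_i$, and its constant $C_i$ is uniformly bounded in $i$ and $t$: in the first branch this is $C(n)q_+(G_i)(1+|G_i|)^2\leq C(n)q_+(\Omega)(1+|\Omega|)^2$, and in the second branch we invoke Remark~\ref{remark:constant-qbar} with $\bar q=q_+(\Omega)$ to obtain the same type of uniform bound.

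\textbf{Conclusion and main obstacle.} With the local estimates on each $G_i$ in hand, Lemma~\ref{lemma:sp-cube-extension} produces
$$\|f-f_{U_t}\|_{L^\qq(U_t)}\leq C|U_t|^{(1-\alpha)/n}\|\nabla f\|_{L^\pp(U_t)},$$
where the constant depends on $n$, $M$, $\max_i |U_t|/|G_i|$, and $\max_i C_i$ — each bounded independently of $t$ by the previous two steps. The delicate point is keeping the constants uniform in $t$: this rests on two ingredients, namely $q_+(\Omega)<\infty$ (ensured by~\eqref{eq:alpha-intro}) and the possibility of subdividing every $U_t$ into a bounded number of subcubes of comparable size (ensured by the boundedness of $\Omega$ together with the uniform modulus $\delta$). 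The strict gap $\sigma<1-\alpha$ is used precisely to force the oscillation on each $G_i$ to fit strictly inside the admissible window $p_-(G_i)\leq q_+(G_i)\leq p_-(G_i)^*$ required by Lemma~\ref{lemma:local-sp}; this is also what makes the constant degenerate as $\sigma\to 1-\alpha$, and the dependence on $q_+(\Omega)$ is what makes it degenerate as $\alpha p_+(\Omega)\to n$.
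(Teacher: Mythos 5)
Your proof is correct and follows essentially the same route as the paper: use the uniform $\frac{\sigma}{n}$-continuity to check that on pieces of diameter less than $\delta$ the oscillation of $\frac{1}{\pp}$ forces $p_-\le q_+\le (p_-)^*$ (or falls in the $p_-\ge n$ case), apply Lemma~\ref{lemma:local-sp}, and patch with Lemma~\ref{lemma:sp-cube-extension} while keeping $M$, $|U_t|/|G_i|$ and the local constants uniformly bounded. The only (harmless) deviations are that you subdivide every $U_t$ rather than treating the cubes with $\diam(U_t)<\delta$ directly, and you bound $q_+(G_i)$ by $q_+(\Omega)$ instead of by $n/(1-\sigma-\alpha)$; both bounds are valid and yield a constant with the stated dependencies.
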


\begin{proof}
Fix $\delta$ in the $\frac{\sigma}{n}$-continuity condition for $\frac{1}{\pp}$.  
We first consider the cubes $U_t$ such that diam$(U_t)<\delta$. If $p_-(U_t)\ge n$, then by Lemma~\ref{lemma:local-sp-direct} and Remark~\ref{remark:constant-qbar}, we have that inequality~\eqref{eq:local-lemma} holds  with a constant that depends only on $\Omega$, $n$ and $q_+(\Omega)$. On the other hand, if $p_-(U_t)<n$, then $U_t$ is contained in the ball $B(x_t,\delta/2)$, where $x_t$ is the center of $U_t$.  Therefore, given any two points $x,\,y \in U_t$, $y \in B(x,\delta)$, and so
\[ \left| \frac{1}{p(x)} - \frac{1}{p(y)}\right|  < \frac{\sigma}{n}.  \]
If we now argue as in the proof of Lemma~\ref{lemma:LHBequiv}, we get that 
\begin{equation} \label{eqn:osc}
    \frac{1}{p_-(U_t)}-\frac{1}{q_+(U_t)} = \frac{1}{p_-(U_t)}-\frac{1}{p_+(U_t)} + \frac{\alpha}{n} \le \frac{\sigma+\alpha}{n} < \frac{1}{n};
\end{equation}
hence, $p_-(U_t)\le q_+(U_t)< (p_-(U_t))^*$. Again, Lemma \ref{lemma:local-sp-direct} gives  inequality\eqref{eq:local-lemma}, but now with a constant of the form $C(n)q_+(U_t)$.  We need to show that $q_+(U_t)$ is uniformly bounded. But by inequality~\eqref{eqn:osc},
\[\frac{1}{q_+(U_t)}\ge \frac{1}{p_-(U_t)}-\frac{\sigma+\alpha}{n} = \frac{n-(\sigma+\alpha)p_-(U_t)}{np_-(U_t)}.\]
Since we have assumed that $p_-(U_t)<n$, we get that
\[q_+(U_t) \le \frac{n}{1-\sigma-\alpha},\]
which gives the desired upper bound.
Note that in this case, the constant may blow up if $\sigma$ tends to $1-\alpha$ or if $\alpha$ tends to $1$. 

\medskip

Now suppose $\diam(U_t)\geq \delta$; for this case we will apply Lemma~\ref{lemma:sp-cube-extension}.  
Partition $U_t$ into $M_t$ cubes $\{G_i\}_{i=1}^{M_t}$ with $\frac{\delta}{2}<\diam(G_i)<\delta$.  The number of cubes is uniformly bounded, since
\[ M_t \lesssim \frac{|U_t|}{\delta^n} \leq \frac{|\Omega|}{\delta^n}. \]

If $p_-(G_i)\ge n$, then $G_i$ satisfies the hypotheses of Lemma \ref{lemma:sp-cube-extension}. If, $p_-(G_i)<n$, we can repeat the previous oscillation estimate for $G_i$ instead of $U_t$, to get that  $p_-(G_i)\le q_+(G_i)\le (p_-(G_i))^*$.  So  again $G_i$ satisfies the hypotheses of Lemma \ref{lemma:sp-cube-extension}. Therefore, we can apply this lemma to $U_t$; this gives us~\eqref{eq:local-lemma}with a constant that depends on the number of cubes $M_t$, which is bounded, and on the ratio $|U_t|/|G_i|$, which is also bounded by $|\Omega|/\delta^n$.  The constant also depends on the largest constant $C_i$ for the Sobolev-Poincar\'e inequality on $G_i$. To estimate these  we can argue as we did for the cubes satisfying $\diam(U_t)<\delta$  to get that they are uniformly bounded. This gives the desired result.  
\end{proof}

The following two lemmas are corollaries of the proof of Lemma~\ref{lemma:Ut-sp}; we show that with additional restrictions on the oscillation of $\pp$ the hypotheses can be weakened.

\begin{lemma}\label{lemma:local-sp-case1}
Let $\Omega$, $\pp$,  and $\qq$ be as in Lemma~\ref{lemma:Ut-sp}, with the additional restriction $p_+(\Omega)<n$ but only assuming that $\frac{1}{\pp}$ is $\frac{1-\alpha}{n}$-continuous. Then \eqref{eq:local-lemma} holds with a constant that depends on $n$, $\Omega$ and $p_+(\Omega)$, but not on $\alpha$ (or equivalently, on $\qq$).
\end{lemma}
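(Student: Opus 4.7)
The plan is to follow the structure of the proof of Lemma~\ref{lemma:Ut-sp}, observing that the added hypothesis $p_+(\Omega)<n$ rules out the case $p_-(U_t)\ge n$ entirely and, more importantly, gives a uniform bound on $q_+(U_t)$ that does not degenerate when $\sigma=1-\alpha$. Let $\delta>0$ be the parameter from the uniform $\tfrac{1-\alpha}{n}$-continuity of $\tfrac{1}{\pp}$.

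For each $t\in\Gamma$ with $\diam(U_t)<\delta$, any two points of $U_t$ lie within distance $\delta$, so arguing as in the proof of Lemma~\ref{lemma:LHBequiv} we get
\[
\frac{1}{p_-(U_t)}-\frac{1}{q_+(U_t)} = \left(\frac{1}{p_-(U_t)}-\frac{1}{p_+(U_t)}\right)+\frac{\alpha}{n} \le \frac{1-\alpha}{n}+\frac{\alpha}{n}=\frac{1}{n}.
\]
Hence $p_-(U_t)\le q_+(U_t)\le (p_-(U_t))^*$, which is exactly the hypothesis needed to invoke Lemma~\ref{lemma:local-sp-direct}, yielding \eqref{eq:local-lemma} with constant $C(\Omega,n,C_0)\,q_+(U_t)$.

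The key step, and the one that uses $p_+(\Omega)<n$ in an essential way, is to bound $q_+(U_t)$ independently of $\alpha$. Since $q(y) = np(y)/(n-\alpha p(y))$ is increasing in $p(y)$, and since $\alpha<1$ together with $p_+(\Omega)<n$ give $n-\alpha p_+(\Omega)\ge n-p_+(\Omega)>0$, we obtain
\[
q_+(U_t)\le \frac{n\,p_+(\Omega)}{n-\alpha\,p_+(\Omega)} \le \frac{n\,p_+(\Omega)}{n-p_+(\Omega)}=\bigl(p_+(\Omega)\bigr)^*.
\]
This bound depends only on $n$ and $p_+(\Omega)$, not on $\alpha$, so the constant in \eqref{eq:local-lemma} has the desired form. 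For cubes $U_t$ with $\diam(U_t)\ge\delta$, I would partition $U_t$ into $M_t$ subcubes $\{G_i\}$ with $\tfrac{\delta}{2}<\diam(G_i)<\delta$ as in the proof of Lemma~\ref{lemma:Ut-sp}; the count $M_t\lesssim |\Omega|/\delta^n$ is uniformly bounded, each $G_i$ satisfies $p_-(G_i)<n$, and the same oscillation estimate gives $q_+(G_i)\le (p_-(G_i))^*\le (p_+(\Omega))^*$. Lemma~\ref{lemma:sp-cube-extension} then patches the local estimates on $G_i$ into the desired estimate on $U_t$, with a constant controlled by the quantities $n$, $\Omega$, $p_+(\Omega)$, $C_0$, and $\delta$, none of which depend on $\alpha$.

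The only subtle point, and the one where the strict inequality $p_+(\Omega)<n$ is genuinely needed, is the uniform control of $q_+(U_t)$: without this strict separation, letting $p_+(U_t)\uparrow n$ would make $q_+(U_t)\uparrow\infty$ and the constant $C(n)\,q_+(U_t)$ from Lemma~\ref{lemma:constant-sp-case1} would blow up. This is precisely the obstruction alluded to in Remark~\ref{rmk:alpha=1}, and it explains why we cannot simply replace $\sigma<1-\alpha$ by $\sigma=1-\alpha$ in Lemma~\ref{lemma:Ut-sp} without the extra assumption on $p_+(\Omega)$.
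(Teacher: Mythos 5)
Your proof is correct and takes essentially the same approach as the paper: on cubes of small diameter the $\frac{1-\alpha}{n}$-continuity gives the oscillation bound $\frac{1}{p_-(U_t)}-\frac{1}{p_+(U_t)}\le\frac{1-\alpha}{n}$, hence $q_+(U_t)\le (p_-(U_t))^*$ so Lemma~\ref{lemma:local-sp-direct} applies, large cubes are handled by partitioning and Lemma~\ref{lemma:sp-cube-extension}, and the hypothesis $p_+(\Omega)<n$ yields the uniform bound $q_+(U_t)\le (p_+(\Omega))^*$ independent of $\alpha$. The only cosmetic difference is that you bound $q_+(U_t)$ directly by $np_+(\Omega)/(n-\alpha p_+(\Omega))\le (p_+(\Omega))^*$, whereas the paper passes through $q_+(U_t)\le (p_-(U_t))^*\le (p_+(\Omega))^*$; both give the same conclusion.
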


\begin{proof}
Since $p_-(U_t)<n$ for every $t\in\Gamma$, we can apply Lemma~\ref{lemma:local-sp-direct} on every $U_t$; this yields a local constant of the form $C(n)q_+(U_t)$. Arguing as in the proof of  Lemma~\ref{lemma:Ut-sp} with our weaker continuity assumption,  we have $q_+(U_t)\le p_-(U_t)^* \le p_+(\Omega)^*<\infty$, and so the local constants are uniformly bounded. 
\end{proof}

\begin{lemma}\label{lemma:local-sp-case2}
Let $\Omega$, $\pp$, and $\qq$ be as in Lemma~\ref{lemma:Ut-sp}, with the additional restriction $p_-(\Omega)\ge n$, but not assuming $\frac{1}{\pp}$ is $\varepsilon$-continuous for any $\varepsilon>0$. Then \eqref{eq:local-lemma} holds with a constant that depends on $n$, $\Omega$ and $q_+(\Omega)$.
\end{lemma}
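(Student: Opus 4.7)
The plan is to observe that Lemma~\ref{lemma:local-sp-case2} is essentially a direct corollary of Lemma~\ref{lemma:local-sp-direct} together with Remark~\ref{remark:constant-qbar}, and that no oscillation control on $\pp$ is required in this regime. Under the hypothesis $p_-(\Omega)\ge n$ we have $p_-(U_t)\ge p_-(\Omega)\ge n$ for every $t\in\Gamma$, so each cube $U_t$ falls into the second alternative of Lemma~\ref{lemma:local-sp-direct}. Applying that lemma on each $U_t$ yields
\[
\|f-f_{U_t}\|_{L^\qq(U_t)} \le C(\Omega,n,C_0,q_+(U_t))\, |U_t|^{\frac{1-\alpha}{n}}\, \|\nabla f\|_{\Lp(U_t)},
\]
where $C_0$ is the constant from $\pp\in\partial LH_0^{\tau_K}(\Omega)$.

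To promote this local bound to a uniform one, I would invoke Remark~\ref{remark:constant-qbar}, which says that when $p\ge n$ the constant in the underlying classical Sobolev--Poincaré inequality may be taken to depend on any fixed upper bound $\bar q\ge q_+(U_t)$ in place of $q_+(U_t)$ itself. Choosing $\bar q = q_+(\Omega)<\infty$, this replaces $q_+(U_t)$ by $q_+(\Omega)$ in the constant and makes it uniform in $t\in\Gamma$. Since $C_0$ is fixed by the exponent $\pp$, the resulting bound has the form $C(\Omega,n,q_+(\Omega))\,|U_t|^{(1-\alpha)/n}\|\nabla f\|_{\Lp(U_t)}$, as claimed.

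The reason no $\varepsilon$-continuity is needed here — in contrast to Lemma~\ref{lemma:Ut-sp} — is that the critical Sobolev--Poincaré constant from Lemma~\ref{lemma:constant-sp-case1}, which degenerates as $q_+(U_t)\to (p_-(U_t))^*$, never has to be invoked when $p_-(U_t)\ge n$; the sub-critical embedding used in Lemma~\ref{lemma:local-sp} is controlled purely by an upper bound on $q_+$, and does not see the oscillation of $\pp$ at all. Consequently there is no real obstacle: the lemma is simply the clean statement of what survives in the ``non-critical'' branch of Lemma~\ref{lemma:local-sp-direct} once it is carried uniformly through the tree-covering.
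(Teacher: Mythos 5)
Your argument is correct and is essentially the paper's own proof: apply the second alternative of Lemma~\ref{lemma:local-sp-direct} on every $U_t$ (valid since $p_-(U_t)\ge p_-(\Omega)\ge n$), then use Remark~\ref{remark:constant-qbar} to replace $q_+(U_t)$ by $q_+(\Omega)$, making the constant uniform in $t\in\Gamma$. The only caveat is your closing claim that the argument ``does not see the oscillation of $\pp$ at all'': the $\partial LH_0^{\tau_K}$ hypothesis is still used inside Lemma~\ref{lemma:local-sp-direct} (via Lemma~\ref{lemma:LHBequiv}) to absorb the factor $|U_t|^{\frac{1}{p_+(U_t)}-\frac{1}{p_-(U_t)}}$, exactly as the $C_0$ in your displayed constant records; what is dropped is only the interior $\varepsilon$-continuity.
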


\begin{proof}
Since $p_-(U_t)\ge n$ for every $t\in\Gamma$ we can again apply Lemma \ref{lemma:local-sp-direct} on every $U_t$, without assuming any additional regularity on $\pp$. The local constants are of the form $C(n,q_+(U_t))$, but by Remark~\ref{remark:constant-qbar} we can  replace $q_+(U_t)$ by $q_+(\Omega)$. 
\end{proof}

\begin{remark}\label{rmk:alpha=1}
In our definition of $\alpha$ in~\eqref{eq:alpha-intro} above, we required that $0\le\alpha<1$ when $p_+(\Omega)<n$.  Ideally, we would like to take $\alpha=1$ in  Lemmas~\ref{lemma:Ut-sp}--\ref{lemma:local-sp-case2}; however, this is not possible unless $\pp$ is constant.   This is a consequence of the fact that our proof uses Lemma \ref{lemma:local-sp}. Indeed, if we fix a cube $Q$ and take $\alpha=1$, we have that 
\begin{equation*}
    0 \le \frac{1}{p_-(Q)}-\frac{1}{p_+(Q)} 
    = \frac{1}{p_-(Q)}-\frac{1}{q_+(Q)}-\frac{1}{n} 
    = \frac{1}{p_-(Q)^*}-\frac{1}{q_+(Q)}.
\end{equation*}
In other words, if we assume that  $q_+(Q)\le p_-(Q)^*$, then this inequality forces $p_+(Q)=p_-(Q)$, so that $\pp$ is constant.
The advantage of Lemma \ref{lemma:local-sp} is that it imposes minimal regularity conditions on $\pp$. 

\begin{comment}
The  Sobolev-Poincar\'e inequality is true in the critical case $\qq = p^*(\cdot)$ if we assume $1/\pp$ is log-H\"older continuous  and $p_+(\Omega)<n$: see, for example, \cite[Theorem 6.29]{CruzUribeFiorenza} and \cite[Theorem 8.3.1]{DieningBook}. An interesting open question is whether  this inequality is true on cubes assuming, for example, that $\pp$ is  continuous. If  that were the case, then the proof of Lemma \ref{lemma:Ut-sp}  can still work assuming continuity of $\pp$, and Theorem~\ref{theorem:sp-general-intro}  below would still hold.   This question is related to a similar open question for the Sobolev inequality:  see~\cite[Problem~A.22]{CruzUribeFiorenza}.
\end{comment}
\end{remark}

We can now prove our main result;  for the convenience of the reader we repeat its statement.

\begin{theorem*} \ref{theorem:sp-general-intro}:
Let $\O\subset \R^n$ be a bounded John domain.  Suppose  $\pp\in\Pp(\Omega)$ is such that $1<p_-(\O)\le p_+(\O)<\infty$ and $\pp\in\partial LH_0^{\tau_K}(\Omega)$.  Fix $\alpha$ as in~\eqref{eq:alpha-intro} and define $\qq$ by~\eqref{eq:def q-intro}.  Suppose also that $\frac{1}{\pp}$ is uniformly $\frac{\sigma}{n}$-continuous for some $\sigma<1-\alpha$.  Then there is a constant $C$ such that for every $f\in W^{1,p(\cdot)}(\Omega)$,
\begin{equation}\label{eq:sp-john}
\|f(x)-f_\Omega\|_{q(\cdot)} \le C \|d^{1-\alpha}\nabla f(x)\|_{p(\cdot)},
\end{equation}
where $d(x)=\dist(x,\partial \Omega)$.
\end{theorem*}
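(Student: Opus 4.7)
The plan is to use a duality argument that combines the local Sobolev--Poincar\'e inequality of Lemma~\ref{lemma:Ut-sp} with the decomposition Theorem~\ref{Decomp Thm}, closing with an off-diagonal H\"older--sum estimate in the spirit of Lemma~\ref{lemma:offdiag-Höldersum}. By the associate-norm identity \eqref{eqn:assoc-norm},
\[
\|f-f_\Omega\|_{\qq} \lesssim \sup_{\|g\|_{\cqq}\le 1}\int_\Omega (f-f_\Omega)\, g\,dx.
\]
Since $f-f_\Omega$ has vanishing mean, replacing $g$ by $g-g_\Omega$ does not change the integrand and yields a function of the same order in $L^{\cqq}(\Omega)$ (via the embedding of Lemma~\ref{lemma:pp-qq-imbed}), so we may assume $\int_\Omega g\,dx=0$.

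Next I would verify the hypotheses of Theorem~\ref{Decomp Thm} for $g\in L^{\cqq}(\Omega)$. Since $p_->1$, Lemma~\ref{lemma:dual-LHB} yields $\cpp\in\partial LH_0^{\tau_K}(\Omega)$; Remark~\ref{remark:pp-qq-LH} then gives $\qq\in\partial LH_0^{\tau_K}(\Omega)$ (and \eqref{eq:alpha-intro} ensures $q_+<\infty$, while $q_-\geq p_->1$), and a second application of Lemma~\ref{lemma:dual-LHB} produces $\cqq\in\partial LH_0^{\tau_K}(\Omega)$. Applying Theorem~\ref{Decomp Thm} therefore gives a decomposition $g=\sum_{t\in\Gamma} g_t$ subordinate to a John tree-covering $\{U_t\}_{t\in\Gamma}$, with $\mathrm{supp}\,g_t\subset U_t$, $\int_{U_t}g_t=0$, and
\[
\|G\|_{\cqq} \le C\|g\|_{\cqq}, \qquad G:=\sum_{t\in\Gamma}\chi_{U_t}\,\frac{\|\chi_{U_t}g_t\|_{\cqq}}{\|\chi_{U_t}\|_{\cqq}}.
\]
Using $\int_{U_t}g_t=0$ to replace $f_\Omega$ by $f_{U_t}$ on each cube and applying H\"older's inequality \eqref{eqn:holder},
\[
\int_\Omega(f-f_\Omega)\,g\,dx=\sum_{t\in\Gamma}\int_{U_t}(f-f_{U_t})g_t\,dx \le \sum_{t\in\Gamma}\|f-f_{U_t}\|_{L^{\qq}(U_t)}\,\|\chi_{U_t}g_t\|_{\cqq}.
\]
Lemma~\ref{lemma:Ut-sp} bounds $\|f-f_{U_t}\|_{L^{\qq}(U_t)}$ by $C|U_t|^{(1-\alpha)/n}\|\chi_{U_t}\nabla f\|_{\pp}$; since $d(x)\sim|U_t|^{1/n}$ on a Whitney cube and $1-\alpha>0$, this scalar absorbs into the integrand to give $\|\chi_{U_t}\,d^{1-\alpha}\nabla f\|_{\pp}$.

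The remaining task is the bound
\[
\sum_{t\in\Gamma}\|\chi_{U_t}\,d^{1-\alpha}\nabla f\|_{\pp}\,\|\chi_{U_t}g_t\|_{\cqq} \lesssim \|d^{1-\alpha}\nabla f\|_{\pp}\,\|g\|_{\cqq}.
\]
Because the right-hand factor involves $g_t$ rather than $g$, Lemma~\ref{lemma:offdiag-Höldersum} cannot be invoked as a black box, so I would mimic its proof. The off-diagonal $K_0$-type estimate \eqref{eqn:offdiag-K0-2} (with $\beta=\alpha$) gives $\|\chi_{U_t}\|_{\pp}\|\chi_{U_t}\|_{\cqq}\lesssim |U_t|^{1+\alpha/n}$; writing $|U_t|=\int\chi_{U_t}\,dx$ and using the pointwise inequality $\sum_t\chi_{U_t}(x)A_tB_t\le\bigl(\sum_t\chi_{U_t}(x)A_t\bigr)\bigl(\sum_s\chi_{U_s}(x)B_s\bigr)$ for non-negative $A_t,B_t$ produces
\[
\sum_{t\in\Gamma}\|\chi_{U_t}\,d^{1-\alpha}\nabla f\|_{\pp}\,\|\chi_{U_t}g_t\|_{\cqq} \lesssim \int_\Omega T^{\alpha}_{\pp}(d^{1-\alpha}\nabla f)(x)\,G(x)\,dx.
\]
Variable-exponent H\"older \eqref{eqn:holder}, the boundedness $T^{\alpha}_{\pp}\colon L^{\pp}\to L^{\qq}$ from Lemma~\ref{lemma:Talphabounded}, and the decomposition bound $\|G\|_{\cqq}\le C\|g\|_{\cqq}$ then close the estimate. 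The case $\alpha=0$ is handled identically, using Lemmas~\ref{lemma:Tsbounded} and~\ref{lemma:Höldersum} in place of their off-diagonal analogues.

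The main obstacle is this last step: the decomposition theorem only provides norm control of the auxiliary function $G$ and not of the individual pieces $g_t$, so Lemma~\ref{lemma:offdiag-Höldersum} cannot be cited directly, and $G$ must take the role that $T_{\cqq}g$ plays in its proof. Everything else is a careful assembly of the tools of Sections~\ref{section:hardy}--\ref{section:john}.
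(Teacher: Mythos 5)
Your proof is correct and follows essentially the same route as the paper's: duality, the decomposition of Theorem~\ref{Decomp Thm} applied to $g-g_\Omega$ in $L^{\cqq}(\Omega)$, the local inequality of Lemma~\ref{lemma:Ut-sp}, absorption of $\ell(U_t)^{1-\alpha}$ into $d^{1-\alpha}$, and the off-diagonal H\"older-sum bound, with the $\alpha=0$ case handled by the diagonal lemmas. The only (cosmetic) difference is the last step: the paper does invoke Lemma~\ref{lemma:offdiag-Höldersum} as a black box by first noting $\|\chi_{U_t}g_t\|_{\cqq}\le\big\|\chi_{U_t}\sum_{r}\chi_{U_r}\|\chi_{U_r}g_r\|_{\cqq}/\|\chi_{U_r}\|_{\cqq}\big\|_{\cqq}$ and applying the lemma to the pair $\left(d^{1-\alpha}\nabla f,\,G\right)$, which is exactly the computation you carry out by hand when you mimic its proof with $G$ in place of $T_{\cqq}g$.
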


\begin{remark}
We want to stress that the hypotheses on $\pp$ in Theorem~\ref{theorem:sp-general-intro} allow the exponent to be discontinuous. In fact, it can be discontinuous at every point of the domain, as long  as the jumps of $1/\pp$ are smaller than $\frac{\sigma}{n}$ and the oscillation of $\pp$ decays towards the boundary according to condition $\partial LH_0^{\tau_K}(\O)$.
\end{remark}

\begin{proof}
We will prove this result using  a local-to-global argument and the local Sobolev-Poincar\'e inequality in Lemma~\ref{lemma:Ut-sp}. Let $\{U_t\}_{t\in \Gamma}$ be a tree covering of $\Omega$ as in Proposition~\ref{prop:tcov-John}.   By~\eqref{eqn:assoc-norm}, there exists  $g\in L^{\cqq}(\O)$, $\|g\|_\cqq\le 1$, such that  
\begin{align*}
\|f-f_\O\|_{q(\cdot)} 
&\le C \int_\Omega (f(x)-f_\O)g(x) \dx \\
& = C \int_\Omega (f(x)-f_\O)(g(x)-g_\Omega) \dx; \\
\intertext{if we now decompose $g-g_\O$ into $\{g_t\}_{t\in\Gamma}$ using Theorem~\ref{Decomp Thm}, we get, since $\int_{U_t} g_t\,dx=0$, that}
&= C\sum_{t\in\Gamma} \int_{U_t} (f(x)-f_{\O})g_t(x) \dx\\ 
&=  C\sum_{t\in\Gamma} \int_{U_t} (f(x)-f_{U_t})g_t(x) \dx;\\ 
\intertext{we now apply~\eqref{eqn:holder} and Lemmas~\ref{lemma:Ut-sp} and~\ref{lemma:offdiag-Höldersum} with $\beta=\alpha$ to get }
&\le C \sum_{t\in\Gamma} C  \|\chi_{U_t} (f-f_{U_t})\|_\qq \|\chi_{U_t}g_t\|_\cqq\\
&\le 
C \sum_{t\in\Gamma} |U_t|^{\frac{1-\alpha}{n}} \|\chi_{U_t} \nabla f\|_\pp \left\| \chi_{U_t} \sum_{r\in \Gamma} \chi_{U_r} \frac{\|\chi_{U_r}g_r\|_\cqq}{\|\chi_{U_r}\|_\cqq}\right\|_\cqq  \\
&= C \sum_{t\in\Gamma} \ell(U_t)^{1-\alpha} \|\chi_{U_t} \nabla f\|_\pp \left\| \chi_{U_t} \sum_{r\in \Gamma} \chi_{U_r} \frac{\|\chi_{U_r}g_r\|_\cqq}{\|\chi_{U_r}\|_\cqq}\right\|_\cqq \\
&\le C \sum_{t\in\Gamma} \|\chi_{U_t} d^{1-\alpha} \nabla f\|_\pp \left\| \chi_{U_t} \sum_{r\in \Gamma} \chi_{U_r} \frac{\|\chi_{U_r}g_r\|_\cqq}{\|\chi_{U_r}\|_\cqq}\right\|_\cqq \\
&\le C \| d^{1-\alpha}\nabla f \|_{p(\cdot)} \left\| \sum_{r\in \Gamma} \chi_{U_r} \frac{\|\chi_{U_r}g_r\|_{\cqq}}{\|\chi_{U_r}\|_{\cqq}}\right\|_{\cqq};\\
\intertext{finally, we apply the bound from Theorem \ref{Decomp Thm} to get}
&\le C  \| d^{1-\alpha}\nabla f \|_{p(\cdot)} \|g\|_{\cqq} \\
&\le C\|d^{1-\alpha}\nabla f  \|_{p(\cdot)}.
\end{align*}
\end{proof}

\begin{comment}
{  
\begin{theorem}\label{theorem:sp-p->n}
Let $\O\subset \R^n$ a bounded John domain, $\pp\in\Pp(\Omega)$ satisfying $n\le p_-(\O)\le p_+(\O)<\infty$ and $\pp\in\partial LH_0^{\tau_K}$.  Let also $\qq\in \Pp(\Omega)$ defined by \eqref{eqn:pq-assump2} for some $0<\beta<\frac{n}{p_-(\Omega)}$ Then, there is a constant $C$ such that the following Sobolev-Poincar\'e inequality holds
\[\|f(x)-f_\Omega\|_{q(\cdot)} \le C \|d^{1-\beta}\nabla f(x)\|_{p(\cdot)},\]
for every $f\in W^{1,p(\cdot)}(\Omega)$.
\end{theorem}
\begin{proof}
The proof is exactly the same than the one for Theorem \ref{theorem:sp-general-intro}. The only difference is that here we apply the local inequalities given by Lemma \ref{lemma:sp-p->n} instead of Lemma \ref{lemma:Ut-sp}.
\end{proof}
}
\end{comment}

Theorem~\ref{theorem:sp-general-intro} gives a family of improved Sobolev-Poincaré inequalities. We highlight the case when $\alpha=0$ as a separate corollary.  

\begin{corollary}\label{cor:imp-poincare}
Let $\O\subset \R^n$ a bounded John domain.  Suppose  $\pp\in\Pp(\Omega)$ is such that  $1<p_-(\O)\le p_+(\O)<\infty$, $\pp\in\partial LH_0^{\tau_K}$ and $\frac{1}{\pp}$ is uniformly $\frac{\sigma}{n}$-continuous for some $\sigma<1$. Then for every $f\in W^{1,p(\cdot)}(\Omega)$,
\[\|f(x)-f_\Omega\|_\pp \le C \|d\nabla f(x)\|_\pp,\]
where $d(x) = d(x,\partial\O)$.
\end{corollary}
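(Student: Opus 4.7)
The plan is to derive this corollary as an immediate specialization of Theorem~\ref{theorem:sp-general-intro} by taking the parameter $\alpha = 0$. First I would verify that $\alpha = 0$ is an admissible choice under the dichotomy~\eqref{eq:alpha-intro}: regardless of whether $p_+(\Omega) < n$ or $p_+(\Omega) \ge n$, the value $\alpha=0$ lies in the permitted range (trivially $0 \le 0 < 1$ and $0 < n/p_+(\Omega)$), so the hypothesis on $\alpha$ is satisfied.

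Next I would check that the exponent $q(\cdot)$ defined by~\eqref{eq:def q-intro} reduces to $p(\cdot)$ when $\alpha = 0$: the identity $\frac{1}{q(\cdot)} = \frac{1}{p(\cdot)} - \frac{0}{n} = \frac{1}{p(\cdot)}$ forces $q(\cdot) = p(\cdot)$ pointwise a.e.\ in $\Omega$. Consequently the left-hand side of~\eqref{eq:sp-john} becomes $\|f - f_\Omega\|_{p(\cdot)}$, and the weight $d^{1-\alpha}$ on the right-hand side becomes $d$, matching the statement of the corollary exactly.

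It remains to match the regularity hypothesis. Theorem~\ref{theorem:sp-general-intro} requires that $\frac{1}{p(\cdot)}$ be uniformly $\frac{\sigma}{n}$-continuous for some $\sigma < 1 - \alpha$; with $\alpha = 0$ this becomes $\sigma < 1$, which is precisely what the corollary assumes. The remaining hypotheses --- that $\Omega$ is a bounded John domain, $1 < p_-(\Omega) \le p_+(\Omega) < \infty$, and $p(\cdot) \in \partial LH_0^{\tau_K}(\Omega)$ --- are identical in both statements. Hence Theorem~\ref{theorem:sp-general-intro} applies directly and yields the desired inequality with a constant $C$ depending on $n$, $\Omega$, $p_\pm(\Omega)$, $C_0$, and $\sigma$.

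Since the derivation is purely a matter of substituting $\alpha = 0$ into the general theorem, there is no genuine obstacle to overcome --- the only step requiring any care is the bookkeeping that confirms each hypothesis of Theorem~\ref{theorem:sp-general-intro} is met with $\alpha = 0$, which I have outlined above. The entire proof can therefore be written in a few lines invoking Theorem~\ref{theorem:sp-general-intro}.
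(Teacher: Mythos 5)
Your proposal is correct and matches the paper exactly: the corollary is presented there as the special case $\alpha=0$ of Theorem~\ref{theorem:sp-general-intro}, with $\qq=\pp$ and the continuity hypothesis reducing to $\sigma<1$, just as you argue. No further comment is needed.
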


If we impose further restrictions on the oscillation of $\pp$, we can weaken some of the other hypotheses in Theorem~\ref{theorem:sp-general-intro}.  

\begin{theorem} \label{thm:sp-small-pp}
Let $\O$, $\pp$, $\alpha$ and $\qq$ be as in Theorem \ref{theorem:sp-general-intro}, but also assume that $p_+(\O)<n$ and  $\frac{1}{\pp}$ is $\frac{1-\alpha}{n}$-continuous.  Then inequality $\eqref{eq:sp-john}$ holds. 
\end{theorem}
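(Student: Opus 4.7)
The proof will follow exactly the same local-to-global scheme as in Theorem~\ref{theorem:sp-general-intro}; the only change is which local Sobolev-Poincar\'e estimate on the Whitney cubes $U_t$ we invoke. The stronger standing hypotheses of Theorem~\ref{theorem:sp-general-intro} were used \emph{only} in Lemma~\ref{lemma:Ut-sp}, to control the local constant $C(n)q_+(U_t)$ uniformly in $t$. Under the new hypotheses ($p_+(\Omega)<n$ and $\tfrac{1}{\pp}$ is $\tfrac{1-\alpha}{n}$-continuous), Lemma~\ref{lemma:local-sp-case1} delivers the same local inequality
\[
\|f-f_{U_t}\|_{L^\qq(U_t)}\le C\,|U_t|^{\frac{1-\alpha}{n}}\|\nabla f\|_{\Lp(U_t)}
\]
with a constant $C$ depending only on $n$, $\Omega$, and $p_+(\Omega)$, and in particular independent of $t\in\Gamma$. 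This is the only ingredient that needs to be swapped.

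With that replacement in hand, the plan is to repeat the argument of Theorem~\ref{theorem:sp-general-intro} verbatim. First, I fix a tree covering $\{U_t\}_{t\in\Gamma}$ as in Proposition~\ref{prop:tcov-John}. Using the associate norm representation~\eqref{eqn:assoc-norm}, I choose $g\in L^\cqq(\Omega)$ with $\|g\|_\cqq\le 1$ so that $\|f-f_\Omega\|_\qq\lesssim \int_\Omega (f-f_\Omega)(g-g_\Omega)\,dx$. Since $\qq\in\partial LH_0^{\tau_K}(\Omega)$ by Remark~\ref{remark:pp-qq-LH} (and hence $\cqq\in\partial LH_0^{\tau_K}(\Omega)$ by Lemma~\ref{lemma:dual-LHB}), Theorem~\ref{Decomp Thm} applies to $g-g_\Omega\in L^\cqq(\Omega)$, producing a decomposition $\{g_t\}_{t\in\Gamma}$ subordinate to $\{U_t\}$ satisfying the bound~\eqref{Decomp estim}.

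Using property (3) in Definition~\ref{Definition decomposition} to replace $f_\Omega$ by $f_{U_t}$ on each $U_t$, then applying H\"older's inequality~\eqref{eqn:holder} on each $U_t$, the local Sobolev-Poincar\'e inequality from Lemma~\ref{lemma:local-sp-case1}, and the off-diagonal Property-G estimate of Lemma~\ref{lemma:offdiag-Höldersum} with $\beta=\alpha$, I obtain
\[
\|f-f_\Omega\|_\qq
\;\lesssim\; \sum_{t\in\Gamma}|U_t|^{\frac{1-\alpha}{n}}\|\chi_{U_t}\nabla f\|_\pp\,\|\chi_{U_t}g_t\|_\cqq.
\]
Since $|U_t|^{1/n}\approx \ell(U_t)\approx d(x)$ for $x\in U_t$ by the Whitney property, the factor $|U_t|^{\frac{1-\alpha}{n}}$ can be absorbed into $d^{1-\alpha}$, yielding $\|\chi_{U_t}\,d^{1-\alpha}\nabla f\|_\pp$ on the right. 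Applying Lemma~\ref{lemma:offdiag-Höldersum} to the resulting sum produces
\[
\|f-f_\Omega\|_\qq
\;\lesssim\; \|d^{1-\alpha}\nabla f\|_\pp\,\Bigl\|\sum_{r\in\Gamma}\chi_{U_r}\frac{\|\chi_{U_r}g_r\|_\cqq}{\|\chi_{U_r}\|_\cqq}\Bigr\|_\cqq,
\]
and finally the bound~\eqref{Decomp estim} of Theorem~\ref{Decomp Thm} controls the second factor by $\|g\|_\cqq\le 1$, which closes the estimate.

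There is no real obstacle here, since every tool needed is already in place; the only conceptual point is to verify that swapping Lemma~\ref{lemma:Ut-sp} for Lemma~\ref{lemma:local-sp-case1} is legitimate. The critical check is that the hypotheses $p_+(\Omega)<n$ and $\frac{1-\alpha}{n}$-continuity of $1/\pp$ are exactly what Lemma~\ref{lemma:local-sp-case1} needs in order to conclude with a constant independent of $\alpha$, and that the validity of Lemmas~\ref{lemma:offdiag-Höldersum} and~\ref{Decomp Thm} only requires $\pp\in\partial LH_0^{\tau_K}(\Omega)$ together with $1<p_-\le p_+<\infty$, all of which remain in force. Hence the inequality~\eqref{eq:sp-john} follows under the weaker continuity assumption.
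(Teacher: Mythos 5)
Your proposal is correct and coincides with the paper's own proof, which consists precisely of rerunning the argument of Theorem~\ref{theorem:sp-general-intro} with Lemma~\ref{lemma:Ut-sp} replaced by Lemma~\ref{lemma:local-sp-case1}. Your additional verification that the remaining tools (Theorem~\ref{Decomp Thm}, Lemma~\ref{lemma:offdiag-Höldersum} with $\beta=\alpha$) only require $\pp\in\partial LH_0^{\tau_K}(\Omega)$ and $1<p_-\le p_+<\infty$ is exactly the right sanity check and matches the paper's reasoning.
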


\begin{proof}
The proof is the same as the proof of Theorem \ref{theorem:sp-general-intro}, but we use Lemma~\ref{lemma:local-sp-case1} instead of Lemma~\ref{lemma:Ut-sp}.
\end{proof}

\begin{theorem} \label{thm:sp-big-pp}
Let $\O$, $\pp$, $\alpha$ and $\qq$ be as in Theorem \ref{theorem:sp-general-intro}, but also assume that $p_-(\O)\ge n$ and do not assume that $\frac{1}{\pp}$ is $\varepsilon$-continuous for any $\varepsilon>0$. Then inequality $\eqref{eq:sp-john}$ holds. 
\end{theorem}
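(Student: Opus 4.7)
The plan is to repeat the proof of Theorem \ref{theorem:sp-general-intro} essentially verbatim, making a single substitution at the step where the local Sobolev--Poincar\'e inequality on Whitney cubes is invoked. Specifically, whenever the proof of Theorem \ref{theorem:sp-general-intro} applies Lemma \ref{lemma:Ut-sp} on a cube $U_t$ of the tree covering $\{U_t\}_{t\in\Gamma}$ furnished by Proposition \ref{prop:tcov-John}, I will instead apply Lemma \ref{lemma:local-sp-case2}. The latter lemma already requires only the hypotheses $p_-(\Omega)\geq n$, $1<p_-(\O)\le p_+(\O)<\infty$, and $\pp\in\partial LH_0^{\tau_K}(\Omega)$, with no $\varepsilon$-continuity assumption on $1/\pp$, and it still delivers the uniform estimate $\|f-f_{U_t}\|_{L^\qq(U_t)}\le C|U_t|^{(1-\alpha)/n}\|\nabla f\|_{\Lp(U_t)}$ with a constant depending only on $n$, $\Omega$, and $q_+(\Omega)$.

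With the uniform local inequality in hand, the remainder of the argument is unchanged. I will use duality via \eqref{eqn:assoc-norm} to write $\|f-f_\Omega\|_\qq$ as a pairing against some $g\in L^\cqq(\Omega)$ with $\|g\|_\cqq\leq 1$; subtract $g_\Omega$ at no cost since $\int_\Omega(f-f_\Omega)=0$; decompose $g-g_\Omega$ using Theorem \ref{Decomp Thm} applied to the exponent $\cqq$, which lies in $\partial LH_0^{\tau_K}(\Omega)$ by Lemma \ref{lemma:dual-LHB} combined with Remark \ref{remark:pp-qq-LH}; apply H\"older's inequality \eqref{eqn:holder} on each $U_t$; and absorb the factor $\ell(U_t)^{1-\alpha}\sim|U_t|^{(1-\alpha)/n}$ into the weight $d(x)^{1-\alpha}$ using the Whitney property $d(x)\approx \ell(U_t)$ on $U_t$. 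The resulting sum is controlled by the off-diagonal H\"older-sum estimate of Lemma \ref{lemma:offdiag-Höldersum} with $\beta=\alpha$, and the decomposition bound \eqref{Decomp estim} disposes of the remaining factor involving $g$.

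The reason the $\varepsilon$-continuity hypothesis can be dropped in this regime is precisely the point isolated in Lemma \ref{lemma:local-sp-case2}: when $p_-(U_t)\geq n$ on every Whitney cube, the constant in the classical Sobolev--Poincar\'e inequality on each $U_t$ may be taken to depend only on an upper bound for $q_+(U_t)$ (by Remark \ref{remark:constant-qbar}), and $q_+(U_t)\leq q_+(\Omega)<\infty$ automatically. Thus the delicate oscillation argument inside the proof of Lemma \ref{lemma:Ut-sp}, needed only to keep $q_+(U_t)$ bounded when $p_-(U_t)<n$ approaches $n$, becomes vacuous here. I do not anticipate any genuine obstacle; this proof is essentially a two-line reduction, and the main task is simply to verify that each ingredient of the local-to-global argument (Theorem \ref{Decomp Thm}, Corollary \ref{continuity of A on John}, Lemma \ref{lemma:offdiag-Höldersum}) only requires the hypothesis $\pp\in\partial LH_0^{\tau_K}(\Omega)$ together with $1<p_-\leq p_+<\infty$, and not any interior continuity of $\pp$.
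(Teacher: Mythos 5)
Your proposal is correct and is exactly the paper's argument: the proof of Theorem~\ref{thm:sp-big-pp} there simply reruns the proof of Theorem~\ref{theorem:sp-general-intro} with Lemma~\ref{lemma:local-sp-case2} in place of Lemma~\ref{lemma:Ut-sp}, which is what you do. Your added check that the remaining ingredients (Theorem~\ref{Decomp Thm}, Lemma~\ref{lemma:offdiag-Höldersum} with $\beta=\alpha$, duality) need only $\pp\in\partial LH_0^{\tau_K}(\Omega)$ and $1<p_-\le p_+<\infty$ is accurate and matches the paper's intent.
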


\begin{proof}
Again, the proof is the same as the proof of Theorem~\ref{theorem:sp-general-intro}, but we use Lemma \ref{lemma:local-sp-case2} instead of Lemma~\ref{lemma:Ut-sp}. 
\end{proof}

%%%%%%%%%%%%%%%%%%%%%%%%

\section{The Sobolev inequality with rough exponents}
\label{section:sobolev}

In this section we prove Theorem~\ref{thm:sobolev-intro}.   For the convenience of the reader we restate it here.

\begin{theorem*}\ref{thm:sobolev-intro}:
Let $\O\subset \R^n$ be a bounded  domain.  Suppose  $\pp\in\Pp(\Omega)$ is such that $1<p_-(\O)\le p_+(\O)<\infty$.  Fix $\alpha$ as in~\eqref{eq:alpha-intro} and define $\qq$ by~\eqref{eq:def q-intro}.  Suppose also that $\frac{1}{\pp}$ is uniformly $\frac{\sigma}{n}$-continuous for some $\sigma<1-\alpha$.  Then there is a constant $C$ such that for every $f\in W_0^{1,p(\cdot)}(\Omega)$,
\begin{equation} \label{eqn:sobolev-ineq}
    \|f\|_{L^\qq(\Omega)} \le C \|\nabla f(x)\|_{L^\pp(\Omega)}.
\end{equation}
\end{theorem*}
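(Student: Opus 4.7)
The plan is to reduce Theorem \ref{thm:sobolev-intro} to the Sobolev--Poincar\'e inequality already proved in Theorem~\ref{theorem:sp-general-intro}, applied on a large ball. Fix a ball $B$ with $\overline{\Omega}\subset B$. The key preliminary step is to construct an extension $\tilde p(\cdot)\in \Pp(B)$ with the following properties: (i) $\tilde p(\cdot)=p(\cdot)$ on $\Omega$, (ii) $(\tilde p)_-\ge p_-$ and $(\tilde p)_+\le p_+$, (iii) $1/\tilde p(\cdot)$ is uniformly $\sigma'/n$-continuous on $B$ for some $\sigma'<1-\alpha$, and (iv) $\tilde p(\cdot)\in \partial LH_0^{\tau}(B)$ for every $\tau\geq 1$. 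Because $B$ is a John domain and $\tilde q(\cdot)$, defined by $1/\tilde q(\cdot) = 1/\tilde p(\cdot) - \alpha/n$, satisfies the hypotheses of Theorem~\ref{theorem:sp-general-intro}, the zero extension $\tilde f:=f\chi_\Omega$ of $f\in W_0^{1,\pp}(\Omega)$ lies in $W^{1,\tilde p(\cdot)}(B)$ with $\nabla\tilde f=\nabla f$ on $\Omega$ and zero outside, so
\[
\|\tilde f-\tilde f_B\|_{L^{\tilde q(\cdot)}(B)}
\le C\,\|d_B^{\,1-\alpha}\nabla\tilde f\|_{L^{\tilde p(\cdot)}(B)}
\le C\,\diam(B)^{1-\alpha}\,\|\nabla f\|_{L^\pp(\Omega)}.
\]

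To pass from this Sobolev--Poincar\'e estimate to the Sobolev inequality \eqref{eqn:sobolev-ineq} I would exploit that $\tilde f\equiv 0$ on $B\setminus\overline{\Omega}$. Restricting to this set gives
\[
|\tilde f_B|\,\|\chi_{B\setminus\overline{\Omega}}\|_{L^{\tilde q(\cdot)}(B)}
= \|(\tilde f-\tilde f_B)\chi_{B\setminus\overline{\Omega}}\|_{L^{\tilde q(\cdot)}(B)}
\le \|\tilde f-\tilde f_B\|_{L^{\tilde q(\cdot)}(B)}
\le C\,\|\nabla f\|_{L^\pp(\Omega)},
\]
and since $\|\chi_{B\setminus\overline{\Omega}}\|_{L^{\tilde q(\cdot)}(B)}$ is a positive constant depending only on the data, this controls $|\tilde f_B|$ by the right-hand side. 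Using that $\tilde q(\cdot)=\qq$ on $\Omega$, the triangle inequality then yields
\[
\|f\|_{L^\qq(\Omega)}
\le \|\tilde f-\tilde f_B\|_{L^{\tilde q(\cdot)}(\Omega)}
+|\tilde f_B|\,\|\chi_\Omega\|_{L^{\tilde q(\cdot)}(B)}
\le C\,\|\nabla f\|_{L^\pp(\Omega)},
\]
as desired.

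The main obstacle is the construction of $\tilde p(\cdot)$ satisfying (i)--(iv); this is where the $\varepsilon$-continuity machinery developed in the paper is essential. Adapting Stein~\cite[Chap.~VI, \S 2]{S_SingularIntegrals} and~\cite[Lemma~2.4]{CruzUribeFiorenza}, I would take a Whitney decomposition $\{Q_k\}$ of the open set $B\setminus\overline{\Omega}$ with a subordinate smooth partition of unity $\{\phi_k\}$, choose for each $Q_k$ a reflected point $y_k\in\Omega$ with $\dist(y_k,Q_k)\approx\diam(Q_k)$, and set
\[
\frac{1}{\tilde p(x)} := \chi_\Omega(x)\frac{1}{p(x)}+\chi_{B\setminus\overline{\Omega}}(x)\sum_k \phi_k(x)\frac{1}{p(y_k)}, \qquad x\in B,
\]
modifying the definition in a thin collar of $\partial B$ so that $1/\tilde p$ becomes constant there; that last adjustment makes (iv) automatic. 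The $\sigma'/n$-continuity of $1/\tilde p$ on $B$ follows from the standard Whitney/reflection estimate that pairs of nearby points yield pairs of nearby reflected points, combined with the hypothesized $\sigma/n$-continuity of $1/p$ on $\Omega$; the delicate case is when one point lies in $\Omega$ and the other in $B\setminus\overline{\Omega}$, and is handled by noting that the reflection of an outer point close to $\partial\Omega$ is comparable to any nearby inner point. Since the hypothesis gives strict inequality $\sigma<1-\alpha$, there is enough slack to absorb the bounded multiplicative inflation of the continuity modulus inherent to the Whitney construction. Once the extension is in hand, the reduction described above is immediate.
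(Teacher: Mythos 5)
Your proposal is correct and follows essentially the same route as the paper: extend the exponent via a Stein--Whitney construction to a large ball (preserving $p_\pm$, keeping $\frac1\pp$ uniformly $\varepsilon$-continuous, and securing the $\partial LH_0^\tau$ condition of the ball by making the exponent constant near its boundary), apply Theorem~\ref{theorem:sp-general-intro} on the ball to the zero extension of $f$, and then remove the average by the triangle inequality. Your two deviations are minor and both work: you extend $\frac{1}{\pp}$ directly, which actually avoids the $p_+^2/p_-^2$ conversion factor appearing in Lemma~\ref{lemma:extend-exp} (just note that the justification should be that the Whitney/reflection construction loses an arbitrarily small amount in the modulus, as in Lemmas~\ref{lemma:uniform-boundary} and~\ref{lemma:closed-extension}, rather than a fixed ``bounded multiplicative inflation,'' which the strict inequality $\sigma<1-\alpha$ alone could not absorb), and you control $|\tilde f_B|$ by restricting the Sobolev--Poincar\'e inequality to $B\setminus\overline{\Omega}$ where $\tilde f$ vanishes, whereas the paper uses the classical $L^1$ Sobolev inequality together with H\"older's inequality.
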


\medskip

The heart of the proof is an extension theorem for exponent functions that are $\varepsilon$-continuous, which lets us extend the exponent $\pp$ from an arbitrary bounded domain to a John domain on which the hypotheses of Theorem~\ref{theorem:sp-general-intro} hold.  We prove this extension result in three lemmas.  The first lemma lets us extend the exponent to the boundary of $\Omega$.

\begin{lemma}  \label{lemma:uniform-boundary}
Given a bounded domain $\Omega$, let $f : \Omega \rightarrow \R$ be uniformly $\varepsilon_0$-continuous on $\Omega$ for some $\varepsilon_0>0$.  Then $f$ can be extended to a function on $\overline{\Omega}$ that is uniformly $\varepsilon$-continuous for any $\varepsilon>\varepsilon_0$.  Further, $f_-(\overline{\Omega})=f_-(\Omega)$ and $f_+(\overline{\Omega})=f_+(\Omega)$.
\end{lemma}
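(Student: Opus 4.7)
The plan is to extend $f$ pointwise to $\partial\Omega$ using oscillation limits from the interior, so that the oscillation at each boundary point is automatically controlled by the uniform $\varepsilon_0$-continuity on $\Omega$. Let $\delta_0>0$ be a modulus of uniform $\varepsilon_0$-continuity, and for $x_0\in\overline\Omega$ and $r>0$ write
\[ m_{x_0,r} = \essinf_{B(x_0,r)\cap\Omega} f, \qquad M_{x_0,r} = \esssup_{B(x_0,r)\cap\Omega} f.\]
By a finite covering argument (cover $\overline\Omega$ by balls of radius $\delta_0/3$ centered in $\Omega$), $f$ is bounded on $\Omega$, so the monotone limits $m^*(x_0)=\lim_{r\to 0^+}m_{x_0,r}$ and $M^*(x_0)=\lim_{r\to 0^+}M_{x_0,r}$ exist; the uniform $\varepsilon_0$-continuity gives $M_{x_0,r}-m_{x_0,r}\le\varepsilon_0$ whenever $r<\delta_0/2$, so $M^*(x_0)-m^*(x_0)\le\varepsilon_0$. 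I would define $\tilde f:=f$ on $\Omega$ and $\tilde f(x_0):=m^*(x_0)$ on $\partial\Omega$ (any value in $[m^*(x_0),M^*(x_0)]$ works equally well).

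The identity $f_\pm(\overline\Omega)=f_\pm(\Omega)$ is then immediate: the inclusion $B(x_0,r)\cap\Omega\subset\Omega$ forces $\tilde f(x_0)\in[m^*(x_0),M^*(x_0)]\subset[f_-(\Omega),f_+(\Omega)]$ for every $x_0\in\partial\Omega$, which together with $\Omega\subset\overline\Omega$ yields both equalities.

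The main step is verifying uniform $\varepsilon$-continuity on $\overline\Omega$ for every $\varepsilon>\varepsilon_0$; I would take $\delta=\delta_0/8$ and treat three cases for $x,y\in\overline\Omega$ with $|x-y|<\delta$. When $x,y\in\Omega$ it is the hypothesis. When $x\in\partial\Omega$ and $y\in\Omega$, one has $y\in B(x,\delta_0/4)\cap\Omega$, so $f(y)\in[m_{x,\delta_0/4},M_{x,\delta_0/4}]$, and the monotonicity of the approximants places $\tilde f(x)=m^*(x)$ in that same interval, which has length at most $\varepsilon_0$ since $\delta_0/4<\delta_0/2$. The main obstacle is the case $x,y\in\partial\Omega$, and the choice $\delta=\delta_0/8$ is what makes it work: if $|x-y|<\delta_0/8$ then $B(y,\delta_0/8)\subset B(x,\delta_0/4)$, and therefore
\[ \tilde f(y)\in[m_{y,\delta_0/8},M_{y,\delta_0/8}]\subset[m_{x,\delta_0/4},M_{x,\delta_0/4}]\ni \tilde f(x),\]
so that both values lie in a common interval of length at most $\varepsilon_0$, giving $|\tilde f(x)-\tilde f(y)|\le\varepsilon_0<\varepsilon$. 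The reason the conclusion must be phrased with strict $\varepsilon>\varepsilon_0$ rather than $\varepsilon=\varepsilon_0$ is precisely that, after passing to the limits defining $m^*$ and $M^*$, the strict inequality in the definition of $\varepsilon_0$-continuity degrades to a non-strict one.
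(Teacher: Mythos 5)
Your construction is viable, but one step is wrong as written. In the case $x\in\partial\Omega$, $y\in\Omega$ you assert that $f(y)\in[m_{x,\delta_0/4},M_{x,\delta_0/4}]$. Since you defined $m_{x,r}$ and $M_{x,r}$ with $\essinf$ and $\esssup$, these are essential bounds, while $f(y)$ is a pointwise value, and $\varepsilon$-continuity in Definition~\ref{def:eps-cont} is a pointwise condition: it does not prevent $f$ from exceeding the essential supremum of every ball at a single point. For instance, $f=c\chi_{\{y_0\}}$ with $0<c<\varepsilon_0$ is uniformly $\varepsilon_0$-continuous, yet $f(y_0)=c>\esssup_{B}f=0$ for every ball $B$. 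The step is easily repaired without touching anything else: since $|x-y|<\delta_0/8$, every $z\in B(x,\delta_0/4)\cap\Omega$ satisfies $|z-y|<3\delta_0/8<\delta_0$, so $|f(z)-f(y)|<\varepsilon_0$; hence $[m_{x,\delta_0/4},M_{x,\delta_0/4}]\subset[f(y)-\varepsilon_0,f(y)+\varepsilon_0]$, and since $\tilde{f}(x)=m^*(x)$ lies in the left-hand interval, $|\tilde{f}(x)-f(y)|\le\varepsilon_0<\varepsilon$. (Switching to pointwise $\inf/\sup$ would make your original containment trivial, but then the boundary values could drop below $f_-(\Omega)$, since pointwise infima see null sets that essential infima ignore; so your essential-bound definition is the better choice, combined with the reroute above.)

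Apart from this, the argument is correct and follows a genuinely different route from the paper's. The paper covers $\overline{\Omega}$ by finitely many balls $B(x_i,\delta_0/2)$ centered in $\Omega$, defines the boundary value as a $\liminf$ of $f$ along a sequence chosen inside one such ball, and verifies $\varepsilon$-continuity by triangle-inequality estimates along (sub)sequences, treating the boundary/interior and boundary/boundary pairs separately. Your envelope definition $m^*(x)=\lim_{r\to 0^+}m_{x,r}$ replaces that sequence bookkeeping by monotonicity and nesting of the intervals $[m_{x,r},M_{x,r}]$, which makes the two-boundary-point case nearly automatic (via $B(y,\delta_0/8)\subset B(x,\delta_0/4)$) and gives the equalities $f_\pm(\overline{\Omega})=f_\pm(\Omega)$ directly from $m^*(x),M^*(x)\in[f_-(\Omega),f_+(\Omega)]$; the finite cover is needed only to see that $f$ is bounded. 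Both proofs lose the strict inequality in the limiting process, which is exactly why the conclusion is stated for $\varepsilon>\varepsilon_0$ rather than $\varepsilon=\varepsilon_0$.
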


\begin{proof}
Let $\delta_0>0$ be such that if $x,\,y\in \Omega$ are such that $|x-y|<\delta_0$, then $|f(x)-f(y)|<\varepsilon_0$.  Since $\overline{\Omega}$ is closed and bounded, it can be covered by a finite collection of balls $B(x_i,\delta_0/3)$, $1\leq i \leq N$, where $x_i \in \overline{\Omega}$.  Suppose $x_i\in \overline{\Omega}\setminus \Omega$.  Then it is a limit point of $\Omega$, and so there exists a point $x_i' \in \Omega$ such that $|x_i-x_i'|< \delta_0/6$.  But then $B(x_i,\delta_0/3) \subset B(x_i',\delta_0/2)$.  Therefore, we may assume that $\overline{\Omega}$ is covered by $N$ balls $B(x_i,\delta_0/2)$, where each $x_i\in \Omega$.  

Fix a point $x\in \overline{\Omega}\setminus \Omega$.  Then $x\in B(x_i,\delta_0/2)$ and so there exists a sequence of points $y_k \in B(x_i,\delta_0/2)\cap \Omega$ such that $y_k \rightarrow x$ as $k \rightarrow \infty$.  Then we have that $|f(x_i)-f(y_k)|<\varepsilon_0$, so the sequence $\{f(y_k)\}$ is bounded.  Therefore, we can define
\[ f(x) = \liminf_{k\rightarrow \infty} f(y_k).  \]
Hereafter, by passing to a subsequence we will assume without loss of generality that $f(y_k)\rightarrow f(x)$.

\medskip

It is immediate from the construction that $f_-(\overline{\Omega})=f_-(\Omega)$ and $f_+(\overline{\Omega})=f_+(\Omega)$.  To show $\varepsilon$-continuity, fix $\varepsilon>\varepsilon_0$ and $\delta=\delta_0/2$.   First, fix $x\in \Omega$ and suppose $y\in \overline{\Omega}$ satisfies $|x-y|<\delta$.  If $y \in \Omega$, then it is immediate that $|f(x)-f(y)|<\varepsilon_0<\varepsilon$.  Now suppose $y\in \overline{\Omega}\setminus \Omega$.  Then for some $i$, $y \in B(x_i,\delta_0/2)$ and and there is a sequence $\{y_k\}$ in this ball such that $f(y_k)\rightarrow f(y)$.  But then, we have that 
\[ |f(x)-f(y)| \leq |f(x)-f(y_k)|+|f(y_k)-f(y)| < \varepsilon_0 + |f(y_k)-f(y)|.  \]
If we take the limit as $k\rightarrow \infty$, we get that $|f(x)-f(y)|\leq \varepsilon_0<\varepsilon$.  

Now suppose $x\in \overline{\Omega}\setminus \Omega$ and fix $y\in \overline{\Omega}\setminus \Omega$ such that $|x-y|<\delta$.  Then we have $x\in B(x_i,\delta_0/2)$ and sequence $\{y_k\}$ in the ball such that $f(y_k)\rightarrow f(x)$; we also have that $y\in B(x_j,\delta_0/2)$ and sequence $\{z_k\}$ in this ball such that $f(z_k) \rightarrow f(y)$.  But then we have that for $k$ sufficiently large
\[ |y_k-z_k| \leq |x-y_k| + |x-y| + |y-z_k| < \delta_0/4 + \delta_0/2+ \delta_0/4 = \delta_0, \]
and so,
\begin{align*}
 |f(x)-f(y)| 
& \leq |f(x)- f(y_k)| + |f(y_k) - f(z_k)|+|f(z_k)-f(y)| \\
& \leq  |f(x)- f(y_k)| + \varepsilon_0 + |f(z_k)-f(y)|.
\end{align*} 
If we take the limit as $k\rightarrow \infty$, we get  $|f(x)-f(y)|\leq \varepsilon_0<\varepsilon$.  

Finally, we need to consider the case $x\in \overline{\Omega}\setminus \Omega$ and $y \in \Omega$; this is similar to but simpler than the previous case and we omit the details.  This completes the proof.
\end{proof}

\begin{remark}
    In the proof of Lemma~\ref{lemma:uniform-boundary}, the  value of $f(x)$ that we chose is not unique, and we could choose any value between the limit infimum and the limit supremum, or we could choose any other sequence $\{y_k\}$.  
\end{remark}

\medskip

The second lemma lets us extend the exponent function to an $\varepsilon$-continuous function on $\R^n\setminus \overline{\Omega}$.  A more general result, replacing balls by other sets, is possible using the same ideas; here we prove what is necessary for our proof of Theorem~\ref{thm:sobolev-intro}.  Details of generalizations are left to the interested reader.

\begin{lemma} \label{lemma:closed-extension}
Let $\Omega$ be a bounded domain, and suppose that $f : \overline{\Omega} \rightarrow \R$ is uniformly $\varepsilon$-continuous for some $\varepsilon>0$.  Let $B$ be a ball containing $\overline{\Omega}$.  Then there exists an extension of $f : 3\overline{B} \rightarrow \R$ that is uniformly $\varepsilon$-continuous, continuous on $3\overline{B}\setminus \overline{\Omega}$, and $f_-(\overline{\Omega})=f_-(3\overline{B})$ and $f_+(\overline{\Omega})=f_+(3\overline{B})$.
\end{lemma}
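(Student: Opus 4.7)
The plan is to extend $f$ from $\overline{\Omega}$ to $3\overline{B}$ by a standard Whitney-type construction, following Stein and Cruz-Uribe--Fiorenza. Let $U = 3\overline{B}\setminus\overline{\Omega}$, which is an open set, and take a Whitney decomposition $\{Q_k\}$ of $U$, so that $\diam(Q_k)\sim d(Q_k,\partial U)$. For each $k$ I would select a ``shadow point'' $p_k\in\overline{\Omega}$ with $|p_k-c_k|\leq C\diam(Q_k)$, where $c_k$ is the center of $Q_k$ and $C$ depends only on the dimension; this is possible because cubes close to $\partial\Omega$ admit a nearby boundary point, and $\overline{\Omega}\subset \overline{B}$ has bounded diameter. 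Fix a smooth partition of unity $\{\varphi_k\}$ subordinate to a mild enlargement $Q_k^*=(17/16)Q_k$, with $\sum_k\varphi_k\equiv 1$ on $U$, bounded overlap $N=N(n)$, and $\|\nabla\varphi_k\|_\infty\lesssim\diam(Q_k)^{-1}$.

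Define the extension by
\[
\tilde{f}(x) := \begin{cases} f(x), & x\in\overline{\Omega},\\ \displaystyle\sum_k f(p_k)\,\varphi_k(x), & x\in U.\end{cases}
\]
Continuity of $\tilde{f}$ on $U$ follows immediately from smoothness and local finiteness of the partition. Since each $f(p_k)\in[f_-(\overline{\Omega}),f_+(\overline{\Omega})]$ and $\sum_k\varphi_k(x)=1$ for $x\in U$, $\tilde{f}(x)$ is a convex combination of values of $f$ on $\overline{\Omega}$; together with $\tilde{f}|_{\overline{\Omega}}=f$ this gives $f_-(3\overline{B})=f_-(\overline{\Omega})$ and $f_+(3\overline{B})=f_+(\overline{\Omega})$.

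The remaining task is uniform $\varepsilon$-continuity, which I would verify case by case. Let $\delta_0$ be the modulus of uniform $\varepsilon$-continuity of $f$ on $\overline{\Omega}$. The case $x,y\in\overline{\Omega}$ is the hypothesis. For $x\in\overline{\Omega}$ and $y\in U$, each $p_k$ with $\varphi_k(y)\neq 0$ lies in some expanded cube containing $y$, so $|p_k-y|\leq C\diam(Q_{k_0})\lesssim d(y,\partial\Omega)\leq|y-x|$; hence $|p_k-x|\leq(C+1)|y-x|$, and uniform $\varepsilon$-continuity of $f|_{\overline{\Omega}}$ gives $|\tilde{f}(y)-f(x)|\leq \sum_k|f(p_k)-f(x)|\varphi_k(y)<\varepsilon$ provided $\delta<\delta_0/(C+1)$.

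The main case, and the principal obstacle, is $x,y\in U$. Using $\sum_k[\varphi_k(y)-\varphi_k(x)]=0$, one writes
\[
\tilde{f}(y)-\tilde{f}(x) = \sum_k[f(p_k)-f(p_{k_0})][\varphi_k(y)-\varphi_k(x)]
\]
for any fixed index $k_0$ with $x\in Q_{k_0}$. By bounded overlap, only $O(1)$ indices contribute, and neighboring Whitney cubes satisfy $|p_k-p_{k_0}|\lesssim\diam(Q_{k_0})$. The argument then splits two regimes: in the regime $\diam(Q_{k_0})<\delta_0/C$, every relevant $|p_k-p_{k_0}|<\delta_0$, so $|f(p_k)-f(p_{k_0})|<\varepsilon$ by hypothesis, and the mean value theorem yields $|\tilde{f}(y)-\tilde{f}(x)|\lesssim\varepsilon\,|y-x|/\diam(Q_{k_0})$, which is $O(\varepsilon)$ provided $|y-x|\lesssim\diam(Q_{k_0})$; while in the regime $\diam(Q_{k_0})\geq\delta_0/C$, the gradient $|\nabla\tilde{f}|$ is uniformly bounded by $N(f_+-f_-)C/\delta_0$, so uniform $\varepsilon$-continuity on this regime follows from a sufficiently small $\delta$. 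The hard part is patching these two regimes into a single uniform $\delta$: when $|y-x|\geq\diam(Q_{k_0})$ with $Q_{k_0}$ small, the segment $[x,y]$ need not stay inside the neighborhood of $Q_{k_0}$, so the pointwise gradient bound must be integrated carefully along $[x,y]$ while tracking how the shadow points $p_k$ change as one crosses cubes. One handles this by shrinking $\varepsilon$ at the start (equivalently, using the flexibility of Lemma~\ref{lemma:uniform-boundary} to enlarge the starting modulus) so that the accumulated multiplicative constants are absorbed into a genuine uniform $\varepsilon$-continuity modulus on $3\overline{B}$.
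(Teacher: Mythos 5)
There are two genuine gaps. First, your construction starts from a Whitney decomposition of $3\overline{B}\setminus\overline{\Omega}$ rather than of $\R^n\setminus\overline{\Omega}$. For cubes near the \emph{outer} boundary $\partial(3B)$ the Whitney size is comparable to the distance to $\partial(3B)$, not to $\overline{\Omega}$, so your claim that one can pick $p_k\in\overline{\Omega}$ with $|p_k-c_k|\leq C\diam(Q_k)$, $C=C(n)$, is simply false there: those cubes are tiny but lie at distance $\approx r(B)$ from $\overline{\Omega}$. Worse, the defect is not merely in the stated estimate but in the construction itself: adjacent tiny cubes hugging $\partial(3B)$ may be assigned shadow points in $\overline{\Omega}$ that are far apart (nearest-point assignments to a general compact set jump), so $\sum_k f(p_k)\varphi_k$ can change by nearly $f_+-f_-$ over arbitrarily small distances near $\partial(3B)$; such an extension is not uniformly $\varepsilon$-continuous, and it need not even extend continuously to $\partial(3B)$, so no compactness argument can rescue it. The paper decomposes $\R^n\setminus\overline{\Omega}$, so cube size is comparable to the distance to $\overline{\Omega}$, the formula defines a continuous function on all of $\R^n\setminus\overline{\Omega}$, and the outer sphere causes no trouble after restricting to $3\overline{B}$.

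Second, the case $x,y$ both outside $\overline{\Omega}$ is exactly the point you leave unproved: the ``patching of the two regimes'' is asserted, not carried out, and even as sketched it yields $|\tilde f(x)-\tilde f(y)|\leq C\varepsilon$ with dimensional constants, not $\varepsilon$. That is a weaker statement than the lemma (which must hold with the \emph{same} $\varepsilon$), and ``shrinking $\varepsilon$ at the start'' is not available inside this lemma; it would also propagate a constant loss into Lemma~\ref{lemma:extend-exp} and hence into the quantitative hypothesis $\sigma<1-\alpha$ of Theorem~\ref{thm:sobolev-intro}. The paper's proof avoids telescoping and gradient bounds altogether: it fixes the interior modulus $\delta$, observes that on the compact set $\{x\in 3\overline{B}:\dist(x,\overline{\Omega})\geq \delta/8\}$ the extension is continuous, hence uniformly continuous, so a new, smaller $\delta_{ext}$ gives $\varepsilon$-closeness there with no loss; and for points within $\delta/4$ of $\overline{\Omega}$ it shows every shadow point entering the convex combination lies within $\delta$ of the comparison point of $\overline{\Omega}$, so the bound $|\tilde f(x)-f(y)|\leq \sum_k|f(p_k)-f(y)|q_k^*(x)<\varepsilon$ comes out with the original $\varepsilon$. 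If you rework your argument, replace the decomposition as above and handle the exterior--exterior pairs by this distance-splitting (compactness far from $\overline{\Omega}$, comparison through a nearby point of $\overline{\Omega}$ close to it, with the numerical thresholds chosen small enough that all relevant shadow points stay within $\delta$), rather than by integrating gradient bounds across cubes.
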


\begin{proof}
We will follow the proof of the extension theorem in Stein~\cite[Chapter~VI, Section~2]{S_SingularIntegrals}.  For the convenience of the reader we will adopt the same notation.  Let $\{Q_k\}$ be the Whitney decomposition of $\R^n\setminus \overline{\Omega}$:  that is, cubes with disjoint interiors such that 
\[ \diam(Q_k) \leq \dist(Q_k,\overline{\Omega}) \leq 4\diam(Q_k).   \]
Let $Q_k^*=\frac{9}{8}Q_k$, and let $\{q_k^*\}$ be a $C^\infty$ partition of unity such that $\supp(q_k^*)\subset Q_k^*$.   These cubes are such that given any $x\in \R^n \setminus \overline{\Omega}$ there exist at most $N$ cubes such that $x\in Q_k^*$.  Since $\overline{\Omega}$ is compact, there exists $p_k \in \overline{\Omega}$ such that $\dist(Q_k,p_k)=\dist(Q_k,\overline{\Omega})$.  
For all $x\in 3\overline{B}\setminus \overline{\Omega}$, define
\[ f(x) = \sum_k f(p_k) q_k^*(x). \]
Since $\sum q_k^*(x)=1$, is immediate from this definition that $f_-(\overline{\Omega})=f_-(3\overline{B})$ and $f_+(\overline{\Omega})=f_+(3\overline{B})$.  Since the cubes $Q_k^*$ have finite overlap, this sum contains a finite number of non-zero terms at each $x$ and so is continuous on $3\overline{B}\setminus \overline{\Omega}$.

We will show that with this definition, $f$ is uniformly $\varepsilon$-continuous on $3\overline{B}$.  
Since $f$ is uniformly $\varepsilon$-continuous on $\overline{\Omega}$, fix $\delta>0$ such that if $x,\,y\in \overline{\Omega}$ satisfy $|x-y|<\delta$, then $|f(x)-f(y)|<\varepsilon$.   Let
\[ K_0=\{ x\in 3\overline{B} : \dist(x,\overline{\Omega}) \geq \delta/8\}, \quad
K=\{ x\in 3\overline{B} : \dist(x,\overline{\Omega}) \geq \delta/4\}\]
Since $K_0$ is compact and $f$ is continuous at each $x\in K$, $f$ is uniformly continuous on $K_0$.  Therefore, there exists $\delta_{ext}<\delta/8$ such that if $x\in K$, and $y \in 3\overline{B}$ is such that $|x-y|<\delta_{ext}$, then $|f(x)-f(y)|<\varepsilon$.

Now fix $x\in 3\overline{B}\setminus \overline{\Omega}$ such that $\dist(x,\overline{\Omega})<\delta/4$.  Fix $y\in\overline{\Omega}$ such that $|x-y|<\delta/4$.  Then, since $\sum q_k^*(x)=1$,
\[ |f(x)-f(y)| \leq \sum_k |f(p_k)-f(y)|q_k^*(x).  \]
Fix $k$ such that $x\in Q_k^*$.  Then
\begin{multline*} \dist(Q_k, p_k) = \dist(Q_k,\overline{\Omega})
\leq \dist(\overline{\Omega},Q_k^*) + \dist(Q_k^*,Q_k) \\
< \dist(\overline{\Omega}, x) + \tfrac{1}{8}\diam(Q_k)
\leq \delta/4 + \tfrac{1}{8}\dist(Q_k,p_k).  
\end{multline*}
Therefore, rearranging terms we see that $\dist(Q_k,p_k) < \frac{2}{7}\delta$.  
Furthermore, we have that 
\[ |x-p_k| 
\leq \dist(Q_k,p_k)+ \diam(Q_k^*)
= \dist(Q_k,p_k) + \tfrac{9}{8}\diam(Q_k)
\leq \tfrac{17}{8}\dist(Q_k,p_k) < \tfrac{17}{28}\delta. 
\]
Hence, $|y-p_k|\leq |x-y|+|x-p_k|< \tfrac{1}{4}\delta + \tfrac{17}{28}\delta< \delta$.  Therefore, $|f(p_k)-f(y)|<\varepsilon$ for every $k$, and so we have that $|f(x)-f(y)|<\varepsilon$.

Finally, we need to show that if $x \in \overline{\Omega}$, then $f$ is $\varepsilon$-continuous at $x$.  But this follows by the previous argument, exchanging the roles of $x$ and $y$.  Therefore, we have shown that $f$ is $\varepsilon$-continuous on $3\overline{B}$.
\end{proof}

The final lemma shows how the extension can be modified to satisfy the $\tauLH(\Omega)$ condition on the larger set.

\begin{lemma} \label{lemma:extend-exp}
Given a bounded domain $\Omega$ and $\pp \in \Pp(\Omega)$, suppose $1\leq p_-\leq p_+ <\infty$ and $\frac{1}{\pp}$ is uniformly $\varepsilon_0$-continuous for some $\varepsilon_0>0$.  Fix a ball $B$ such that $\overline{\Omega}\subset B$. Then there exists an extension of $\pp$ to $3\overline{B}$ such that $p_-(3\overline{B})=p_-(\Omega)$, $p_+(3\overline{B}) =p_+(\Omega)$, and $\frac{1}{\pp}$ is uniformly $\varepsilon$-continuous on $3B$ for any $\varepsilon>\frac{p_+(\Omega)^2}{p_-(\Omega)^2}\varepsilon_0$.  Moreover, for any $\tau>1$, $\pp \in \tauLH(3B)$. 
\end{lemma}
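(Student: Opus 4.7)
The plan is to apply Lemmas \ref{lemma:uniform-boundary} and \ref{lemma:closed-extension} in succession to the exponent $\pp$ itself rather than to $1/\pp$, and then to verify the $\tauLH(3B)$ condition directly from the Whitney-cube structure of the extension. Working with $\pp$ rather than $1/\pp$ is what produces the factor $p_+^2/p_-^2$ in the statement: the elementary identities
\[
\left|\frac{1}{p(x)}-\frac{1}{p(y)}\right| \leq \frac{|p(x)-p(y)|}{p_-^2}, \qquad |p(x)-p(y)| \leq p_+^2\left|\frac{1}{p(x)}-\frac{1}{p(y)}\right|,
\]
together with the hypothesis that $1/\pp$ is uniformly $\varepsilon_0$-continuous, give that $\pp$ itself is uniformly $p_+^2\varepsilon_0$-continuous on $\Omega$.

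I would then invoke Lemma \ref{lemma:uniform-boundary} to extend $\pp$ to $\overline{\Omega}$ as a uniformly $\eta$-continuous function for any $\eta > p_+^2\varepsilon_0$, preserving $p_-$ and $p_+$; then Lemma \ref{lemma:closed-extension} to extend $\pp$ further to $3\overline{B}$, still uniformly $\eta$-continuous, continuous on $3\overline{B}\setminus\overline{\Omega}$, and preserving $p_-, p_+$. Converting back via the first displayed identity, $1/\pp$ on the resulting extension is uniformly $(\eta/p_-^2)$-continuous; since $\eta$ can be taken arbitrarily close to $p_+^2\varepsilon_0$, this yields the advertised $\varepsilon$-continuity of $1/\pp$ for every $\varepsilon > p_+^2\varepsilon_0/p_-^2$.

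The main obstacle is verifying $\pp\in\tauLH(3B)$ for every $\tau>1$, since uniform $\eta$-continuity on $3\overline{B}$ is strictly weaker than log-Hölder decay near $\partial(3B)$. Fix such a $\tau$, let $x\in 3B$ satisfy $\tau d(x)\leq 1/2$ with $d(x)=\dist(x,\partial(3B))$, and let $r$ be the radius of $B$; then $\overline{\Omega}\subset B$ lies at distance at least $2r$ from $\partial(3B)$. Set $s_0 = \min(1/2, \tau r/(1+\tau))$. When $\tau d(x) \ge s_0$, the quantity $-\log(\tau d(x))$ is bounded above by a constant depending on $r$ and $\tau$, so the trivial bound $p_+(B_{x,\tau})-p_-(B_{x,\tau}) \le p_+ - p_-$ is absorbed into a sufficiently large $C_0$. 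When $\tau d(x) < s_0$, the inequality $d(x)<r/(1+\tau)$ combined with $\dist(x,\overline{\Omega})\ge 2r-d(x)$ forces $\dist(y,\overline{\Omega})\ge r$ for every $y\in B_{x,\tau}$, so every Whitney cube $Q$ of $\R^n\setminus\overline{\Omega}$ meeting $B_{x,\tau}$ satisfies $\diam(Q)\ge r/5$. Differentiating the Lemma \ref{lemma:closed-extension} formula $\pp(y) = \sum_k p(p_k)q_k^*(y)$ and using $\sum_k\nabla q_k^*(y) = 0$ then yields $|\nabla \pp(y)|\lesssim (p_+-p_-)/r$ on $B_{x,\tau}$, so the oscillation of $\pp$ over $B_{x,\tau}$ is at most $C(p_+-p_-)\tau d(x)/r$. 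The elementary bound $s(-\log s)\le 1/e$ for $s\in(0,1/2]$ converts this into the required $C_0/(-\log(\tau d(x)))$ estimate.
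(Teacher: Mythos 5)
Your proposal is correct and follows essentially the same route as the paper: extend via Lemmas~\ref{lemma:uniform-boundary} and~\ref{lemma:closed-extension}, transfer the modulus of continuity between $\pp$ and $1/\pp$ to produce the factor $p_+^2/p_-^2$, and verify $\pp\in\tauLH(3B)$ by combining a trivial bound away from $\partial(3B)$ with Lipschitz control of the Whitney-type extension on the outer annulus, where $\dist(\cdot,\overline{\Omega})\gtrsim r$. The only real difference is cosmetic: the paper extends $p-p_-$ and multiplies by a Lipschitz cut-off $\psi$ before setting $p=(p-p_-)\psi+p_-$, asserting without detail that the result is Lipschitz on the balls $B_{x,\tau}$ near $\partial(3B)$, whereas you skip the cut-off and make that step explicit through the gradient estimate $|\nabla p|\lesssim (p_+-p_-)/r$ obtained from $\sum_k \nabla q_k^*=0$, which is exactly the fact the paper's argument implicitly relies on.
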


\begin{proof}
We will prove that if $\pp$ is uniformly $\varepsilon_0$-continuous on $\Omega$, then it has an extension to $3\overline{B}$ that is uniformly $\varepsilon$-continuous for any $\varepsilon>\varepsilon_0$ and satisfies $p_-(3\overline{B})=p_-(\Omega)$, $p_+(3\overline{B}) =p_+(\Omega)$. The continuity estimate for $\frac{1}{\pp}$ follows immediately from this and the fact that for any $x,\,y$ in the domain of $\pp$,
\[ | p(x)-p(y)| \leq p_+^2\bigg|\frac{1}{p(x)}-\frac{1}{p(y)}\bigg|, \qquad
\bigg|\frac{1}{p(x)}-\frac{1}{p(y)}\bigg| \leq \frac{1}{p_-^2}| p(x)-p(y)|.
\]

On $\Omega$, define $f(x)=p(x)-p_-$.  Then by Lemmas~\ref{lemma:uniform-boundary} and~\ref{lemma:closed-extension}, for any $\varepsilon>\varepsilon_0$, we can extend $f$ to a uniformly $\varepsilon$-continuous function $3\overline{B}$ such that $f_-(3\overline{B}) = 0$, $f_+(3\overline{B})=p_+-p_-$, and $f$ is continuous on $3\overline{B}\setminus \overline{\Omega}$.  Let $\psi$ be a Lipschitz cut-off function such that $0\leq \psi(x)\leq 1$, $\psi(x)=1$ if $x\in 2B$, and $\psi(x)=0$ if $x\in \R^n\setminus 3B$.  Then, arguing as we did above, since $f\psi$ is uniformly continuous on $3\overline{B}\setminus 1.5B$, we must have that $f\psi$ is uniformly $\varepsilon$-continuous on $3\overline{B}\setminus 2B$, and so uniformly $\varepsilon$-continuous on $3\overline{B}$.  

Now define the extension of $\pp$ to be $p(x)=f(x)\psi+p_-$.  The $\varepsilon$-continuity and oscillation bounds for $\pp$ follow at once from the corresponding properties of $f$.  Finally, if we fix $x\in 3B\setminus 2B$ such that $\tau d(x) < \min\{ r(B),1/2\}$, then, since $\pp$ is Lipschitz on $B_{x,\tau}$, we have that \eqref{Boundary LH constant} holds.  On the other hand,  if $1/2>\tau d(x)>r(B)$, this condition always holds with a suitably large constant.  Thus, we have that $\pp \in \tauLH(3B)$.
\end{proof}

We can now prove our main result.

\begin{proof}[Proof of Theorem~\ref{thm:sobolev-intro}]
By Lemma~\ref{lemma:extend-exp}, we can extend $\pp$ to an exponent function on the ball $3B$ (which is a John domain) that satisfies all the hypotheses of Theorem~\ref{theorem:sp-general-intro}.  Therefore, we have that for all $f\in W_0^{1,\pp}(\Omega) \subset W^{1,p}(3B)$,
\[ \|f-f_{3B}\|_{L^\qq(3B)} \leq C\|d^{1-\alpha}\grad f\|_{L^\pp(3B)} \leq C\diam(\Omega)^{1-\alpha}\|\grad f\|_{L^\pp(3B)}.  
 \]
By the triangle inequality and since $\supp(f)\subset \Omega$, we can rewrite this as
\[ \|f\|_{L^\qq(\Omega)} \leq C \|\grad f\|_{L^\pp(\Omega)} + |f_{3B}|\|\chi_{3B}\|_{L^\qq(3B)}. \]
But by the classical Sobolev inequality in $L^1(\Omega)$ and by~\eqref{eqn:holder},
\[ |f_{3B}| \leq |3B|^{-1}\int_\Omega |f|\,dx \leq C|3B|^{-1}\int_\Omega |\grad f|\,dx
\leq C|3B|^{-1}\|\Omega\|_{L^\cpp(\Omega)}\|\grad f\|_{L^\pp(\Omega)}.  
\]
If we combine this with the previous inequality, we get the desired result.
\end{proof}

%%%%%%%%%%%%%%%%%%%%%%%%%%%%%%%%%%%%%%%%%%%%%%%%%%%%%%%%%%%%%%%%%%%%%%%%%%%%%%%%%%%

\section{Necessity of the log-H\"older continuity conditions}
\label{section:necessity}

In this section we consider the necessity of the $\tauLH(\Omega)$ and $LH_0(\Omega)$ conditions for Sobolev-Poincar\'e and Sobolev inequalities in the variable Lebesgue spaces.  We do not prove that they are necessary in general; rather, we construct examples to show that they are the weakest continuity conditions which can be assumed to prove these inequalities in general.  These results are similar to the classic example of  Pick and  R{\r{u}}{\v{z}}i{\v{c}}ka~\cite{MR1876258}  showing that log-H\"older continuity is necessary for the Hardy-Littlewood maximal operator to be bounded on $\Lp(\Omega)$.  Indeed, our construction should be compared to theirs. It also contains ideas similar to the ones used in \cite{ADLg}, where the irregularity appears on the domain instead of the exponent. 

We first show that the boundary condition $\tauLH(\Omega)$ is necessary for the validity of the improved Sobolev-Poincar\'e inequality \eqref{eq:sp-john},
\[\|f-f_\Omega\|_{L^\qq(\Omega)} \le C \|d^{1-\alpha}\nabla f\|_{L^\pp(\Omega)}, \]
for all $0 \leq \alpha \leq 1$. 
Indeed, given a bounded domain $\Omega\subset \R^n$, we construct an exponent function $\pp$ that is uniformly continuous on $\Omega$ but does not verify $\tauLH(\Omega)$ for any $\tau\geq 1$. Then, we construct  an associated sequence of  test functions for which  inequality~\eqref{eq:sp-john} holds with a constant that blows up.

Second, we  use the same uniformly continuous exponent $\pp$ (with some minor modifications) to show that the boundary condition $\tauLH(\Omega)$ is not sufficient for the validity of the Sobolev-Poincar\'e inequality obtained for $\alpha=1$ and $\qq=p^*(\cdot)$ in \eqref{eq:sp-john}, that is,
\begin{equation} \label{eqn:ps-endpt}
\|f-f_\Omega\|_{L^\ps(\Omega)} \le C \|\nabla f\|_{L^\pp(\Omega)}.
\end{equation}
Our example shows that this case, which was not considered in Theorem~\ref{theorem:sp-general-intro}, requires hypotheses that control the variation of $\pp$ inside of the domain, such as the classical log-H\"older condition. This example also shows that the Sobolev inequality~\eqref{eqn:sobolev-ineq},
\[\|f\|_\qq \le C \|\nabla f\|_{p(\cdot)}, \]
also fails when $\qq=\ps$.  As we noted above, this gives a positive answer to a question in~\cite[Problem~A.22]{CruzUribeFiorenza}.  
 
 Finally, We  adapt our test functions to prove that the boundary condition $\tauLH(\Omega)$ is also not sufficient  for the validity of  the Korn inequality, or for the solution of the divergence equation.

\subsection*{Constructing the exponent function}
Let $\Omega\subset \R^n$ be a bounded domain (not necessarily a John domain).  Fix sequences $\{c_k\}_{k=1}^\infty \subset \Omega$ and $\{r_k\}_{k=1}^\infty$ such that  $\{B(c_k,7r_k)\}_{k=1}^\infty$ is a collection of pairwise disjoint balls contained in $\Omega$. In each ball $B(c_k,7r_k)$ define  the balls $A_k^1=B(a_k,r_k)$, and the annuli $A_k^2=B(a_k,2r_k)\setminus B(a_k,r_k)$ and $A_k^3=B(a_k,3r_k)\setminus B(a_k,2r_k)$. Analogously, we define the ball $B_k^1=B(b_k,r_k)$ and the annuli $B_k^2$ and $B_k^3$ in such a way that $A_k^3\cap B_k^3=\emptyset$.  Let  $D_k$ be the complement in $B(c_k,7r_k)$ of the union of these balls and washers.   See Figure~\ref{interior test functions}.
\begin{figure}[ht]
  \centering
  \includegraphics[scale=0.3]{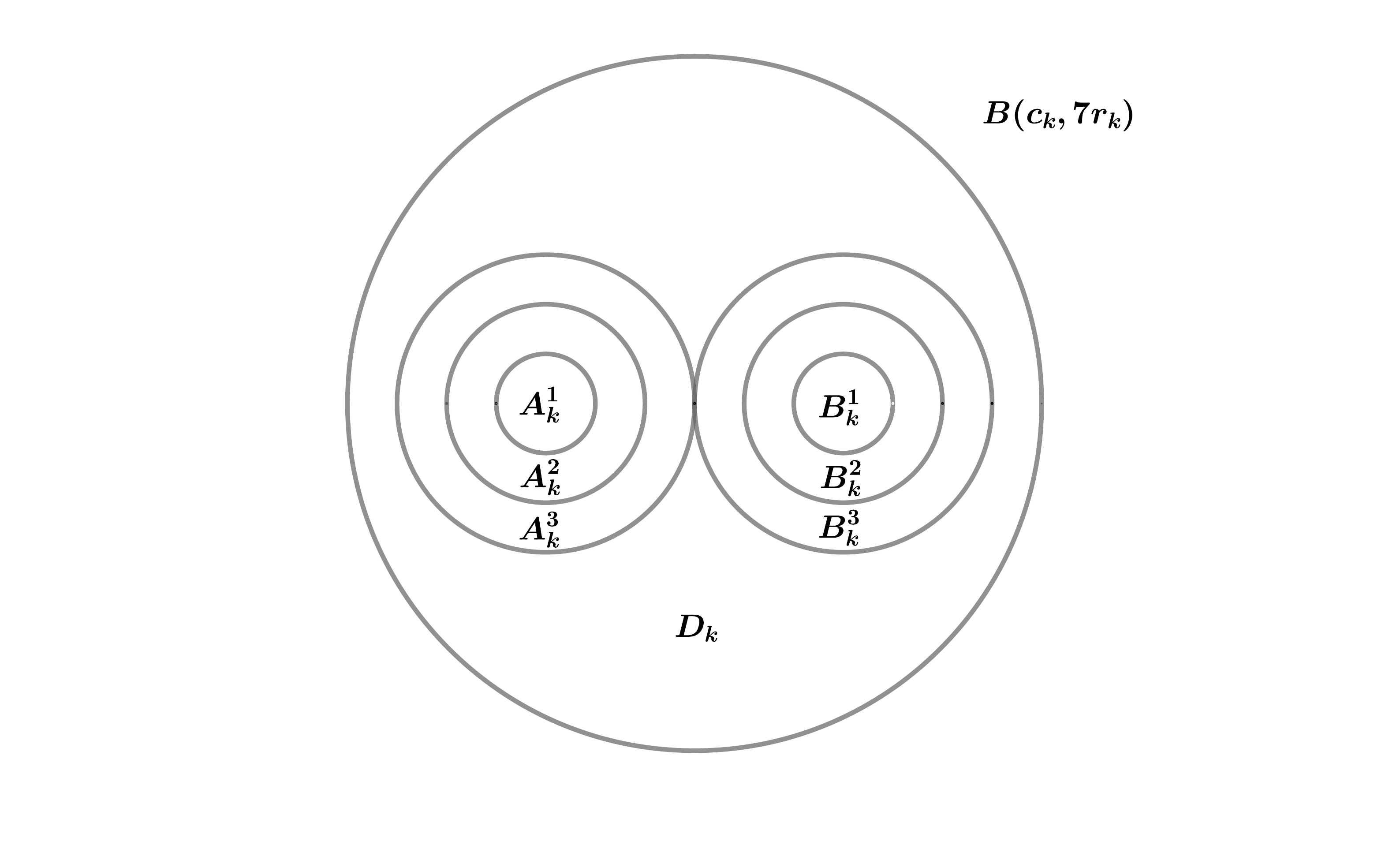}
  \caption{Local definition of the exponent $\pp$ and  test functions $f_k$}
  \label{interior test functions}
\end{figure}
Observe that since $\Omega$ has a finite measure and the balls $B(c_k,7r_k)$ are pairwise disjoint, then $\lim_{k\to\infty} r_k=0$. In addition, we also assume that $0<r_k<1$.

Now, in order to define the variable exponent $\pp \in \Pp(\Omega)$, let $p_0>1$ and let $\{p_k\}_{k=0}^\infty$ be a sequence that converges to $p_0$, with $p_k\geq p_0$ for all $k\in \N$. Hence, $\pp$ equals $p_0$ on $\Omega \setminus \bigcup_{k=1}^\infty B(c_k,7r_k)$, and on each of the balls $B(c_k,7r_k)$ is piece-wise constant or radial.  More precisely, let 
\begin{equation}\label{p counterexample}
\arraycolsep=1.4pt\def\arraystretch{1.5}
p(x) = \left\{\begin{array}{cl} 
    p_0 & \textrm{in } D_k\cup A_k^3 \cup B_k^3\\
    p_k & \textrm{in } A_k^1\cup B_k^1\\
    \left(\frac{|x-a_k|-r_k}{r_k}\right)p_0 + \left(\frac{2r_k-|x-a_k|}{r_k}\right)p_k  \hspace{1cm} & \textrm{in } A_n^2\\ 
    \left(\frac{|x-b_k|-r_k}{r_k}\right)p_0 + \left(\frac{2r_k-|x-b_k|}{r_k}\right)p_k  \hspace{1cm} & \textrm{in } B_k^2.
    \end{array}\right.
\end{equation}
Then $\pp \in \Pp(\Omega)$ and satisfies
\[1<p_0=p_-(\Omega)\leq p_+(\Omega)= \|p_k\|_{\ell^\infty} <\infty.\]
In addition, using that  $\{p_k\}_{k=0}^\infty$ converges to $p_0$, it follows by a straightforward estimate that $\pp$ is uniformly continuous on $\Omega$.  Fix $\varepsilon>0$; then there exists $k_0$ such that $|p_k-p_0|< \varepsilon$ for any $k> k_0$.  For $k\leq k_0$, since $\pp$ is uniformly continuous on each ball $B(c_k,7r_k)$, there exist $\delta_k$ such that $|p(x)-p(y)|<\varepsilon$ for any $x,\,y \in B(c_k,7r_k)$ such that $|x-y|<\delta_k$. Therefore, if we set
\[\delta:=\min\{\delta_1,\cdots,\delta_{k_0},r_1,\cdots,r_{k_0}\}, \]
then we have that  $|p(x)-p(y)|<\varepsilon$ for any $x,\,y \in \Omega$ such that $|x-y|<\delta$.

Finally, we choose a sequence $\{p_k\}$ such that the exponent $\pp$ satisfies all the assumptions made above but \eqref{Boundary LH constant} fails to be uniformly bounded along the balls $\{B(c_k,7r_k)\}_{k=1}^\infty$. We do so by showing the failure of the equivalent property in Lemma~\ref{lemma:LHBequiv}.  Define
\begin{equation*}
p_k:=p_0+\dfrac{1}{|\log(r_k)|^{1/2}}.
\end{equation*}
Hence, given $B=B(c_k,7r_k)$, we have 
\begin{multline}\label{non Holder}
|B|^{\frac{1}{p_+(B)}-\frac{1}{p_-(B)}} =|B|^{\frac{1}{p_k}-\frac{1}{p_0}} \simeq r_k^{\frac{n(p_0-p_k)}{p_0p_k}} \\
=\exp{\left(\frac{n}{p_0p_k}\frac{-1}{|\log(r_k)|^{1/2}}\log(r_k)\right)}
\geq C\exp{\left(\frac{n}{p_0\|p_k\|_{\ell^\infty}}|\log(r_k)|^{1/2}\right)},
\end{multline}
which tends to infinity.

\subsection*{Necessity of the $\tauLH(\Omega)$ condition for Theorem~\ref{theorem:sp-general-intro}}
We now show that the boundary log-H\"older condition $\tauLH(\Omega)$ is necessary for the improved Sobolev-Poincar\'e inequality \eqref{eq:sp-john} to hold. We start with a bounded domain $\Omega\subset\R^n$ and the uniformly continuous exponent $\pp\in \Pp(\Omega)$ defined above in \eqref{p counterexample}.  Further, we  also assume that the collection of balls $\{B(c_k,7r_k)\}_{k=1}^\infty$ approaches the boundary.  More precisely, we assume that  $d(c_k,\partial\Omega)=7r_k$. Then, from \eqref{non Holder} and Lemma~\ref{lemma:LHBequiv}, we have that $\pp$ does not satisfy the $\tauLH(\Omega)$ for $\tau=1$. Since these conditions are nested, it does not satisfy it for any  $\tau\geq 1$.  

We now define our  test functions to correspond to this geometry.  For each $k\in \N$, define  $f_k:\Omega\to \R$ so that each function $f_k$ has support in $B(c_k,7r_k) $ and  has mean value zero:
\begin{equation}\label{test f}
\arraycolsep=1.4pt\def\arraystretch{1.2}
f_k(x) = \left\{\begin{array}{ll} 
    r_k & \textrm{in } A_k^1 \cup A_k^2\\
    3r_k-|x-a_k|  \hspace{1cm} & \textrm{in } A_k^3\\ 
    -r_k & \textrm{in } B_k^1 \cup B_k^2\\
    |x-b_k|-3r_k  \hspace{1cm} & \textrm{in } B_k^3\\ 
    0 & \textrm{in } D_k.
    \end{array}\right.
\end{equation}
Clearly, each $f_k$ is a bounded Lipschitz function, and so $f_k \in W^{1,\pp}(\Omega)$.  
Now, fix $\alpha$ as in~\eqref{eq:alpha-intro} and define $\qq$  by ~\eqref{eq:def q-intro}. Let $q_k$ be such that $1/q_k = 1/p_k-\alpha/n$. 
Now suppose  that the improved Sobolev-Poincar\'e inequality is valid on $W^{1,p(\cdot)}(\Omega)$ for the exponents $\qq$ and $\pp$. But, if we let $B=B(c_k,7r_k)$, then  we have that
\begin{multline*}
\dfrac{\|f_k\|_{L^\qq(\Omega)}}{\|d^{1-\alpha}\grad f_k\|_{\Lp(\Omega)}}
= \dfrac{\|f_k\|_{L^\qq(\Omega)}}{\|d^{1-\alpha}\grad f_k\|_{\Lp(A_k^3\cup B_k^3)}}
\geq  \dfrac{\|f_k\|_{L^\qq(A_k^1)}}{\|d^{1-\alpha}\grad f_k\|_{\Lp(A_k^3\cup B_k^3)}} \\ 
 \simeq \dfrac{\|f_k\|_{L^\qq(A_k^1)}}{r_k^{1-\alpha}\|\grad f_k\|_{\Lp(A_k^3\cup B_k^3)}} 
 =\dfrac{r_k |A_k^1|^{\frac{1}{q_k}}}{r_k^{1-\alpha}(2|A_k^3|)^{\frac{1}{p_0}}}
 \simeq r_k^{-\alpha+\frac{n}{q_k}-\frac{n}{p_0}}
= r_k^{\frac{n}{p_k}-\frac{n}{p_0}} 
\simeq |B|^{\frac{1}{p_+(B)}-\frac{1}{p_-(B)}}.
 \end{multline*}
By \eqref{non Holder} we have that the right-hand term tends to infinity as $k\rightarrow \infty$.  This contradicts our assumption that the Sobolev-Poincar\'e inequality holds.   This shows that boundary condition $\tauLH(\Omega)$ is (in some sense) a necessary condition for  inequality~\eqref{eq:sp-john} to hold. 

\begin{comment}
If $B\cap B(c_k,7r_k)\setminus D_k \neq \emptyset$, for some one or more $k\in\N$, we fix $k$ so that $p_k$ is the largest value among the balls it intersects. Notice that $B$ might intersect $B(c_k,7r_k)\setminus D_k$ for infinitely many $k$. However, since the collection of exponents $p_k$ converges to $p_0$ from above there is a maximum value for this intersection. Also, the radius $r$ of $B$ is at least $\frac{r_k}{4}$, and 
%
\[ p_-(B(c_k,7r_k)) \leq p_-(B) \leq p_+(B) \leq p_+(B(c_k,7r_k)).\]
%
It follows that for any ball $B=B(x,d(x,\partial\Omega))$,
%
\[ |B|^{\frac{1}{p_+(B)}-\frac{1}{p_-(B)}} \leq C. \]
%
Therefore, by Lemma~\ref{lemma:LHBequiv}, $\pp \in \partial LH_0^1(\Omega)$.
\end{comment}

\subsection*{Insufficiency of the $\tauLH(\Omega)$ condition for the Sobolev-Poincar\'e inequality}
We now modify the previous construction to show that the $\tauLH(\Omega)$ condition is not sufficient to prove~\eqref{eqn:ps-endpt}.  
This example shows that it is also necessary to impose a control on the regularity of $\pp$ inside  the domain.  For this example, we will modify $\pp$ and the $f_k$ by assuming that the balls $\{B(c_k,7r_k)\}_{k=1}^\infty$ do not approach the boundary.  More precisely, we assume that the distance from each ball $B(c_k,7r_k)$ to $\partial \Omega$ is larger than some $\mu>0$, independent of $k$.  It then follows immediately that  $\pp$ satisfies the boundary condition $\tauLH(\Omega)$, for any $\tau\geq 1$.  It does not, however, satisfy the $LH_0(\Omega)$ condition, since this condition is equivalent to the quantity $|B|^{\frac{1}{p_+(B)}-\frac{1}{p_-(B)}}$ being uniformly bounded for all balls $B\subset \Omega$.  (See~\cite[Lemma~3.24]{CruzUribeFiorenza}; this result is stated for $\Omega=\R^n$ but the same proof works in general.)

We can now argue as before.  On each ball $B=B(c_k,7r_k)$ we have that
\begin{multline*}
\dfrac{\|f_k\|_{\Lps(\Omega)}}{\|\grad f_k\|_{\Lp(\Omega)}}
= \dfrac{\|f_k\|_{\Lps(\Omega)}}{\|\grad f_k\|_{\Lp(A_k^3\cup B_k^3)}}
\geq  \dfrac{\|f_k\|_{\Lps(A_k^1)}}{\|\grad f_k\|_{\Lp(A_k^3\cup B_k^3)}}\\
=\dfrac{r_k |A_k^1|^{\frac{1}{p^*_k}}}{(2|A_k^3|)^{\frac{1}{p_0}}}
\simeq r_k^{\frac{n}{p_k}-\frac{n}{p_0}} 
\simeq |B|^{\frac{1}{p_+(B)}-\frac{1}{p_-(B)}}.
\end{multline*}
Again, by~\eqref{non Holder} the righthand term is unbounded as $k\rightarrow \infty$.

\subsection*{Insufficiency of the $\tauLH(\Omega)$ condition for other inequalities}
It is well-known that in the constant exponent case 
the Sobolev-Poincar\'e inequality  is related to (and,  in fact, in many case  equivalent to) the Korn inequality, the conformal Korn inequality, the Fefferman-Stein inequality on bounded domains, and the solvability of the divergence equation. These inequalities have been studied in the variable exponent setting assuming the classical log-H\"older condition. Here we show that the $\tauLH(\Omega)$ condition is not sufficient for the Korn inequality to hold. 

We first recall this inequality. Given a bounded domain $\Omega \subset \R^n$, with $n \geq 2$ and $1 < p < \infty$, the constant exponent Korn inequality says that there exists a constant $C$ such that   
\begin{equation}\label{Korn}
    \| \grad\mathbf{u} \|_{L^p(\Omega)^{n \times n}} \leq C \| \varepsilon(\mathbf{u}) \|_{L^p(\Omega)^{n \times n}}
\end{equation}
for any vector field $\mathbf{u}$ in  $W^{1,p}(\Omega)^n$ with $\int_\Omega \grad\mathbf{u}-\varepsilon(\mathbf{u})=0$. By $\nabla\mathbf{u}$ we denote the differential matrix of $\mathbf{u}$ and by $\varepsilon(\mathbf{u})$ its symmetric part,
\begin{equation*}
\varepsilon_{ij}(\mathbf{u}) = \frac{1}{2} \left( \frac{\partial u_i}{\partial x_j} + \frac{\partial u_j}{\partial x_i} \right).
\end{equation*}
This inequality plays a fundamental role in the analysis of the linear elasticity equations, where $\mathbf{u}$ represents a displacement field of an elastic body. We refer to \cite{AD} for a detailed description.  The Korn inequality is also valid on spaces with variable exponent  assuming that $\pp$ verifies log-H\"older condition on the domain. (See~\cite[Theorem 14.3.23]{DieningBook} for an equivalent version of \eqref{Korn}.)  

We now construct our counter-example.  Define $\pp$ as in the previous example so that it satisfies the log-H\"older condition only on the boundary.  On each ball $B=B(c_k, 7r_k)$, define the vector field $\mathbf{u}:\Omega\to\R^n$  by 
\begin{equation}\label{test u}
\arraycolsep=1.4pt\def\arraystretch{1.2}
\mathbf{u}(x) = \left\{\begin{array}{ll} 
    S\cdot (x-a_k) & \textrm{in } A_k^1 \cup A_k^2,\\
    \phi(x)S\cdot (x-a_k)  \hspace{1cm} & \textrm{in } A_k^3,\\ 
    -S\cdot (x-b_k) & \textrm{in } B_k^1 \cup B_k^2,\\
    -\phi(x)S\cdot (x-b_k)  \hspace{1cm} & \textrm{in } B_k^3,\\ 
    0 & \textrm{in } D_k,
    \end{array}\right.
\end{equation}
where $\phi(x)=3-|x|/r_k$ and $S\in \R^{n\times n}$ is the skew-symmetric matrix 
that equals $S_{12}=-1$, $S_{21}=1$, and zero in the rest of the entries. The vector field $\mathbf{u}$ depends on $k$ but we omit it in the notation for simplicity. Then we have that
\begin{multline*}
\dfrac{\|\grad\mathbf{u}\|_{\Lp(\Omega)}}{\|\varepsilon(\mathbf{u})\|_{\Lp(\Omega)}}
= \dfrac{\|\grad\mathbf{u}\|_{\Lp(\Omega)}}{\|\varepsilon(\mathbf{u})\|_{\Lp(A_k^3\cup B_k^3)}}
\geq  \dfrac{\|\grad\mathbf{u}\|_{\Lp(A_k^1)}}{\|\varepsilon(\mathbf{u})\|_{\Lp(A_k^3\cup B_k^3)}} \\ 
\simeq \dfrac{|A_k^1|^{\frac{1}{p_k}}}{(2|A_k^3|)^{\frac{1}{p_0}}}
\simeq r_k^{\frac{n}{p_k}-\frac{n}{p_0}} 
\simeq |B|^{\frac{1}{p_+(B)}-\frac{1}{p_-(B)}},
\end{multline*}
and again the right-hand side is unbounded.  

\medskip

\subsection*{Insufficiency of the $\tauLH(\Omega)$ condition for the solvability of the divergence equation}
Another problem related to the Korn and Sobolev-Poincar\'e inequalities is the solvability of the divergence equation. In the constant exponent case, the existence of a solution for this differential equation has been widely studied by a number of authors under different geometric assumptions on the domain. Given a bounded domain and an exponent $1<q<\infty$, we say that the divergence equation is solvable if there exists a constant $C$ such that, for any function $f\in L^q(\Omega)$ with vanishing mean value, there is a solution ${\bf v}\in W^{1,q}_0(\Omega)^n$ of the divergence equation $\div({\bf v})=f$ with the $L^q(\Omega)$ regularity estimate
\[\|\grad{ \bf v}\|_{L^q(\Omega)}\leq C \|f\|_{L^q(\Omega)}.\]
Bogovskii~\cite{Bogovskii} constructed an explicit representation of the solution $\bf{v}$ on star-shaped domains with respect to a ball using singular integral operators. Later, this representation was generalized to the class of John domains  in \cite{ADM}. Then, using the theory of Calderón-Zygmund operators and the boundedness of the Hardy-Littlewood maximal operator (e.g., assuming that $\pp\in LH_0(\Omega)$), this explicit solution on John domains was generalized to variable Lebesgue spaces in~\cite[Theorem~14.3.15]{DieningBook}.

It is also well-known that the solvability of the divergence equation implies the Korn inequality under very general assumptions. In our case, it only uses the norm equivalence \eqref{eqn:assoc-norm}, and so this argument extends to the variable exponent setting. Therefore, if $\pp$ is the exponent above for which the Korn inequality fails, the divergence equation is not solvable on $\Lp(\Omega)$. We refer to \cite[Theorem~14.3.18]{DieningBook} and \cite[Theorem~14.3.23]{DieningBook} for the implication to the Korn inequality.

\section{Log-H\"older continuity on the boundary}
\label{section:lh-boundary}

In this section we show that the $\tauLH(\Omega)$ condition implies that the exponent function can be extended to a log-H\"older continuous function on $\partial \Omega$.  We cannot prove this for all John domains:  we need to impose some regularity on the boundary.  More precisely, we will assume the following.

\begin{definition}\label{definition:boundary-john}
A bounded domain $\Omega$ in $\R^n$ is a John domain up to the boundary (or, simply, a boundary John domain) with parameter $\lambda>1$ if there exists a point $x_0\in\Omega$ such that, given any $y\in\overline{\Omega}$, there exists a rectifiable curve parameterized by arc length $\gamma : [0,\ell] \rightarrow \Omega$, with $\gamma(0)=y$, $\gamma(\ell)=x_0$, and $\lambda\, \dist(\gamma(t),\partial \Omega) \geq t$.
\end{definition}

This property holds, for example, if $\Omega$ has a Lipschitz boundary, but it also holds for a much larger class of domains:  for instance, the so-called semi-uniform domains.  For a precise definition and the relationship of these domains to John domains, see Aikawa and Hirata~\cite{MR2410379}.  It would be interesting to give an explicit example of a John domain that is not a boundary John domain.

\begin{proposition} \label{prop:boundary-john-LH0}
    Given a boundary John domain $\Omega$ with John parameter $\lambda>1$, let $\pp\in \Pp(\Omega)$ be such that $1\leq p_-\leq p_+<\infty$ and $\pp \in \tauLH(\Omega)$ for some $\tau>2\lambda$.  Then $\pp$ can be uniquely extended to a function on $\overline{\Omega}$ such that $\pp$ is log-H\"older continuous on $\partial \Omega$.  More precisely, given $x,\,y \in \partial \Omega$, $|x-y|<\frac{1}{2}$,
    \begin{equation} \label{eqn:bjlh-1}
|p(x)-p(y)| \leq \frac{C_0}{-\log(|x-y|)}.   
\end{equation}
\end{proposition}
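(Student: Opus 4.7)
My plan is to define the extension of $\pp$ to a boundary point $y \in \partial\Omega$ as the limit of $p(\gamma(t))$ as $t \to 0^+$ along any John curve $\gamma$ from $y$, verify that this limit is independent of the curve (this is where the hypothesis $\tau > 2\lambda$ enters crucially), and finally establish \eqref{eqn:bjlh-1} by exhibiting, for any two boundary points $x$ and $y$, a single interior point $z$ whose ball $B_{z,\tau}$ captures nearby arcs of John curves from both $x$ and $y$, so that a single application of the $\tauLH(\Omega)$ condition gives the bound.

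To carry out the first step, fix $y \in \partial\Omega$ and a John curve $\gamma : [0,\ell] \to \overline{\Omega}$ with $\gamma(0) = y$, $\gamma(\ell) = x_0$, and $\lambda d(\gamma(t)) \geq t$. For $0 < s < t$ with $t$ small enough that $\tau t/\lambda \leq 1/2$, arc-length parameterization gives $|\gamma(s) - \gamma(t)| \leq t - s < t$, while $\tau d(\gamma(t)) \geq \tau t/\lambda > t$ (using $\tau > \lambda$), so $\gamma(s) \in B_{\gamma(t),\tau}$, and Definition~\ref{tauLHB} yields
\[
|p(\gamma(s)) - p(\gamma(t))| \leq \frac{C_0}{-\log(\tau d(\gamma(t)))} \leq \frac{C_0}{-\log(\tau t/\lambda)},
\]
which vanishes as $t \to 0^+$; hence $p(y) := \lim_{t \to 0^+} p(\gamma(t))$ exists. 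For independence of the curve, given two John curves $\gamma_1, \gamma_2$ from $y$, the triangle inequality gives $|\gamma_1(t) - \gamma_2(t)| \leq 2t$, and because $\tau > 2\lambda$ we have $2t < \tau t/\lambda \leq \tau d(\gamma_1(t))$, so $\gamma_2(t) \in B_{\gamma_1(t),\tau}$; the same $\tauLH$ estimate then forces $\lim p(\gamma_1(t)) = \lim p(\gamma_2(t))$.

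Finally, to prove \eqref{eqn:bjlh-1}, fix distinct $x, y \in \partial\Omega$ with $|x-y| < 1/2$, set $c := 2\lambda/(\tau - \lambda) > 0$ and $t := c|x-y|$, let $\gamma_x, \gamma_y$ be John curves from $x$ and $y$, and put $z := \gamma_x(t)$. The defining identity $\tau c/\lambda = c + 2$ gives $\tau d(z) \geq \tau t/\lambda = (c+2)|x-y|$. For $0 < s \leq |x-y|$, $|\gamma_x(s) - z| \leq t - s < (c+2)|x-y|$ and $|\gamma_y(s) - z| \leq s + (t + |x-y|) \leq (c+2)|x-y|$, so both points lie in $B_{z,\tau}$. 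The $\tauLH(\Omega)$ condition yields
\[
|p(\gamma_x(s)) - p(\gamma_y(s))| \leq \frac{C_0}{-\log(\tau c|x-y|/\lambda)}.
\]
Letting $s \to 0^+$ and noting that $\tau c/\lambda$ is a fixed constant exceeding $1$ (so the additive shift inside the logarithm is harmless), one obtains $|p(x) - p(y)| \leq C/(-\log|x-y|)$ for $|x-y|$ below a fixed threshold; for $|x-y|$ in the complementary bounded range the bound is trivial because $p_+ - p_- < \infty$. Uniqueness of the extension is automatic, since the construction depends only on $\pp$ restricted to $\Omega$. The main obstacle is the independence-of-curve step, where $\tau > 2\lambda$ is indispensable: with only $\tau > \lambda$ the Cauchy property along a fixed curve still holds, but two different John curves from the same boundary point cannot be compared inside a single $\tauLH$ ball.
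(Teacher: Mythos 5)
Your strategy is the same as the paper's: define the boundary value as a nontangential limit (the paper uses sequences with $|x_k-x|\le \lambda d(x_k)$, you use points on John curves, which is the same thing), and prove \eqref{eqn:bjlh-1} by choosing one interior point $z$ on the curve to $x$ at scale comparable to $|x-y|$, checking that the tails of both approach curves lie in $B_{z,\tau}$, and applying the $\tauLH(\Omega)$ condition once. Your ball-membership computations (the identity $\tau c/\lambda=c+2$, and the use of $\tau>2\lambda$ for independence of the curve) are correct.

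There is, however, a recurring error in how you convert the $\tauLH(\Omega)$ bound into a bound in terms of $t$ or $|x-y|$: you use the John lower bound $d(\gamma(t))\ge t/\lambda$ to conclude $\frac{C_0}{-\log(\tau d(\gamma(t)))}\le \frac{C_0}{-\log(\tau t/\lambda)}$. Since $r\mapsto \frac{1}{-\log r}$ is increasing on $(0,1)$, a lower bound on $d(\gamma(t))$ gives a \emph{lower} bound on $\frac{C_0}{-\log(\tau d(\gamma(t)))}$, so this inequality (and the analogous one at $z$ in the final step, where it is the crux of \eqref{eqn:bjlh-1}) is reversed as written. The lower bound on $d$ is what you need for ball membership; for the logarithm you need an \emph{upper} bound on $d$, which is available for free: since $\gamma$ is parameterized by arc length and $\gamma(0)\in\partial\Omega$, we have $d(\gamma(t))\le|\gamma(t)-\gamma(0)|\le t$. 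With this, the Cauchy estimate becomes $\frac{C_0}{-\log(\tau t)}$ (valid once $\tau t\le 1/2$, which is also what guarantees Definition~\ref{tauLHB} applies at $\gamma(t)$; your smallness condition $\tau t/\lambda\le 1/2$ does not ensure $\tau d(\gamma(t))\le 1/2$ when $\lambda>1$), and in the last step $\tau d(z)\le \tau c|x-y|$ gives $|p(\gamma_x(s))-p(\gamma_y(s))|\le \frac{C_0}{-\log(\tau c|x-y|)}$, which still yields \eqref{eqn:bjlh-1} below a fixed threshold because $\tau c>1$ is a fixed constant, with the complementary range handled trivially by $p_+-p_-<\infty$. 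So your argument is correct after this straightforward repair; note that the paper avoids the issue by pinning the scale exactly, choosing $k_0$ so that $|x-y|=d(x_{k_0})\frac{\tau-2\lambda}{2}$, rather than bounding $d$ at the comparison point only from below.
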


\begin{proof}
Fix a point $x\in \partial \Omega=\overline{\Omega}\setminus \Omega$.  Let $\{x_k\}$ be a sequence of points in $\Omega$ such that $x_k \rightarrow x$ as $k\rightarrow \infty$ and $|x_k-x| \leq \lambda d(x_k)$.  Since $\Omega$ is a boundary John domain, such a sequence of points always exists:  choose the points $x_k$ to lie on the curve $\gamma$ connecting $x_0$ to $x$.  (Note that the length of the curve connecting $x$ to $x_k$ is always longer than $|x-x_k|$.)  We will refer to such sequences at nontangential approach sequences.

We claim that the sequence $\{p(x_k)\}$ is Cauchy.  To see this, fix $j,\,k \in \N$; without loss of generality we may assume that $d(x_j)\leq d(x_k)$.  Then
\[ |x_j-x_k| \leq |x_j-x| + |x_k-x| \leq \lambda\big( d(x_j)+ d(x_k) \big) \leq 2\lambda d(x_k).  \]
It is immediate $x_j \in B_{x_k,2\lambda} \subset B_{x_k,\tau}$. Hence, by the $\tauLH(\Omega)$ condition,
\[ |p(x_j)-p(x_k)| \leq p_+(B_{x_k,\tau}) - p_-(B_{x_k,\tau}) \leq \frac{C_0}{-\log(\tau d(x_k))}.
\]
Thus, as $j,\,k \rightarrow \infty$, $|p(x_j)-p(x_k)|\rightarrow 0$.  

Since $\{p(x_k)\}$ is Cauchy, it converges; denote this limit by $p(x)$.  This limit is unique in the sense that given any other nontangential approach sequence $\{y_k\}$ in $\Omega$ converging to $x$, the same argument shows that
\[ |p(x_k)-p(y_k)| \leq C_0\max\bigg\{ \frac{1}{-\log(\tau d(x_k))},\frac{1}{-\log(\tau d(y_k))} \bigg\}, 
\]
and the right-hand side tends to $0$ as $k\rightarrow \infty$.  This defines our extension of $\pp$ to $\overline{\Omega}$; note that by our definition we have $p_-(\overline{\Omega})=p_-(\Omega)$ and $p_+(\overline{\Omega})=p_+(\Omega)$.

\medskip

We will now prove that $\pp$ is log-H\"older continuous on $\partial \Omega$.  It will suffice to prove that \eqref{eqn:bjlh-1} holds for  $x,\,y\in \partial \Omega$ such that 
\[ |x-y| < \frac{\tau-2\lambda}{4\tau} < \frac{1}{2}.
\]
Since $p_+<\infty$, it always holds for $|x-y|\geq \frac{\tau-2\lambda}{4\tau}$ for a sufficiently large constant $C_0$.  

Fix such $x,\,y \in \partial\Omega$. Let ${x_k}$ be a nontangential approach sequence converging to $x$.  Since we may choose the points $x_k$ to lie on the curve connecting $x_0$ to $x$, we may assume that there exists $k_0\geq 1$ such that 
\[  |x-y| = d(x_{k_0}) \left(\frac{\tau - 2\lambda}{2}\right). 
\]
Further, by passing to a subsequence (that includes $x_{k_0}$) we may assume that $d(x_k)$ decreases to $0$. Let $\{y_k\}$ be a nontangential approach sequence converging to $y$. By passing to a subsequence, we may assume that for all $k\geq k_0$, 
\[ |y_k-y| \leq d(x_{k_0}) \left(\frac{\tau - 2\lambda}{2}\right).
\]

We can now estimate as follows:  for all $k\geq k_0$,
\[ |x_k - x_{k_0}| \leq |x_k - x| + |x_{k_0}-x| \leq \lambda d(x_k) + \lambda d(x_{k_0}) < \tau d(x_{k_0}). \]
Similarly,
\[ |y_k - x_{k_0}| \leq |y_k - y|+ |y-x|+|x-x_{k_0}| < 2d(x_{k_0})\left(\frac{\tau - 2\lambda}{2}\right) + \lambda d(x_{k_0})
\le \tau d(x_{k_0}). 
\]
Hence, $x_k,\,y_k \in B_{x_{k_0},\tau}$, and so by the $\tauLH(\Omega)$ condition and our choice of $k_0$,
\begin{multline*} 
|p(x_k) - p(y_k)| \leq p_+(B_{x_{k_0},\tau}) - p_-(B_{x_{k_0},\tau}) \\
\leq \frac{C_0}{-\log(\tau d(x_{k_0}))}
=  \frac{C_0}{-\log(\frac{2\tau}{\tau-2\lambda}|x-y|)}
\leq \frac{D_0}{-\log(|x-y|)};
\end{multline*}
the last inequality holds since the function $x\mapsto -\log(x)^{-1}$ is convex and $\frac{2\tau}{\tau-2\lambda}>1$.  Since this is true for all $k\geq k_0$, if we pass to the limit we get \eqref{eqn:bjlh-1}.  This completes the proof.
\end{proof}

\section{An application to PDEs}
\label{section:applications}

In this section we give an application of our results to elliptic PDEs.  In~\cite{MR4332462}, the first author, Penrod and Rodney studied a Neumann-type problem for a degenerate $\pp$-Laplacian.   The basic operator is the
$\pp$-Laplacian:  given an
exponent function $\pp$, let
\[ \Delta_\pp u = -\div ( |\nabla u|^{\pp-2} \nabla u ). \]
This operator arises in the calculus of variations as an example of
nonstandard growth conditions, and has been studied by a
number of authors: see~\cite{MR3308513, MR2639204,
  MR2291779, MR3379920} and the extensive references they contain.  In~\cite{MR4332462} they considered the degenerate version of this operator,
\[ Lu = - \div(|\sqrt{Q}\nabla u|^{\pp-2} Q\nabla u), \]
where $Q$ is a $n\times n$, positive semi-definite, self-adjoint, measurable
matrix function.   These operators have also been studied, though
nowhere nearly as extensively:  see, for instance, \cite{MR3585054,  MR2670139,
 MR3974098}.   

Here we consider a particular version of their result.
Let $\Omega$ be a bounded, open domain in $\R^n$, and let $Q$ be  defined on an open neighborhood of $\overline{\Omega}$. They showed that if 
$1<p_-\leq p_+<\infty$ and $|\sqrt{Q(\cdot)}|_{op} \in L^\infty(\Omega)$, then the existence of the Poincar\'e inequality
\[ \|f-f_\Omega\|_{L^\pp(\Omega)} \leq C\|\grad f\|_{L^\pp(\Omega)}
\]
is equivalent to the existence of a weak solution to
\begin{equation}\label{nprob}
\begin{cases}
\div\Big(\Big|\sqrt{Q}\nabla u\Big|^{\pp-2} Q\nabla u\Big) 
& = |f|^{\pp-2} \text{ \; in }\Omega\\
{\bf n}^T \cdot Q \nabla u &= 0\text{ \; on }\partial \Omega,
\end{cases}
\end{equation}
where ${\bf n}$ is the outward unit normal vector of
$\partial \Omega$.
Further, they showed that solutions must satisfy the $\Lp(\Omega)$ regularity condition,
\begin{equation}\label{hypoest}
\| u\|_{\Lp(\Omega)} \leq C_1 \|f\|_{\Lp(\Omega)}^{\frac{r_*-1}{p_*-1}},
\end{equation}
where $p_*$ and $r_*$ are defined by
 \begin{equation*} 
   p_* = \begin{cases}
     p_+, & \text{ if } \|\grad u\|_{\Lp(\Omega)} <1, \\
     p_-, & \text{ if }\|\grad u\|_{\Lp(\Omega)} \geq 1,
   \end{cases}
   \quad \text{ and } \quad
   r_* = \begin{cases}
     p_+, & \text{ if } \|f\|_{\Lp(\Omega)} \geq 1,\\
     p_-, & \text{ if } \|f\|_{\Lp(\Omega)} < 1.
   \end{cases}
      \end{equation*}

      If we combine this with our Sobolev-Poincar\'e inequalities, in particular with Corollary~\ref{cor:imp-poincare} and Theorem~\ref{thm:sp-big-pp}, we immediately get the following result.

      \begin{theorem}
          Let $\Omega$ be a bounded John domain.  Suppose $\pp \in \Pp(\Omega)$ is such that $\pp \in \partial LH_0^{\tau_K}(\Omega)$ and either
          \begin{enumerate}
              \item $1<p_-\leq p_+ < \infty$ and $\frac{1}{\pp}$ is $\frac{\sigma}{n}$-continuous for any $\sigma<1$, or

              \item $n\leq p_-\leq p_+ < \infty$ (and we make no assumptions on the interior regularity of $\pp$).
          \end{enumerate}
          Then the Neumann-type problem~\eqref{nprob} has a weak solution in $\Omega$ that satisfies the regularity condition~\eqref{hypoest}.
      \end{theorem}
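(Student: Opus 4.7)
The plan is to reduce the statement to the existence result of Cruz-Uribe, Penrod, and Rodney in~\cite{MR4332462}, which guarantees that~\eqref{nprob} has a weak solution satisfying~\eqref{hypoest} as soon as the (unweighted) Poincar\'e inequality
\[
\|f-f_\Omega\|_{L^\pp(\Omega)} \leq C\|\grad f\|_{L^\pp(\Omega)}
\]
holds on $\Omega$ (together with the standing assumptions on $Q$, which are part of the hypotheses inherited from~\cite{MR4332462}). Thus the whole task reduces to establishing this Poincar\'e inequality under each of the two sets of hypotheses.

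For case~(1), I would invoke Corollary~\ref{cor:imp-poincare} directly: the hypotheses $1<p_-\leq p_+<\infty$, $\pp\in\partial LH_0^{\tau_K}(\Omega)$, and the uniform $\frac{\sigma}{n}$-continuity of $1/\pp$ for some $\sigma<1$ are exactly what that corollary requires. This yields the improved Poincar\'e inequality $\|f-f_\Omega\|_{L^\pp(\Omega)}\le C\|d\,\grad f\|_{L^\pp(\Omega)}$. Since $\Omega$ is bounded, $d(x)\leq\diam(\Omega)$ throughout $\Omega$, and the monotonicity of the variable Lebesgue norm then gives $\|d\,\grad f\|_{L^\pp(\Omega)} \leq \diam(\Omega)\,\|\grad f\|_{L^\pp(\Omega)}$, producing the ordinary Poincar\'e inequality with a constant depending on $\diam(\Omega)$.

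Case~(2) is handled in the same way, except that now one applies Theorem~\ref{thm:sp-big-pp} with $\alpha=0$, which is admissible under~\eqref{eq:alpha-intro} because $p_-\geq n$ forces $p_+\geq n$. With $\alpha=0$ we have $\qq=\pp$ in~\eqref{eq:def q-intro}, and the conclusion of Theorem~\ref{thm:sp-big-pp} is again the weighted Poincar\'e inequality, from which bounding $d$ by $\diam(\Omega)$ yields the unweighted form. The key point is that Theorem~\ref{thm:sp-big-pp} imposes no interior continuity condition on $\pp$, which is precisely why case~(2) needs no $\varepsilon$-continuity hypothesis.

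I do not anticipate any substantive obstacle here: the analytic content has already been carried out in Theorem~\ref{theorem:sp-general-intro} and its corollaries, and the PDE conclusion is a black-box application of~\cite{MR4332462}. The only points requiring care are (i) matching the constant $\tau_K$ in the $\partial LH_0^{\tau_K}$ hypothesis with the John-domain constant used in the Poincar\'e theorems, which holds by the definition given in Remark~\ref{remark:tree-covering-john}, and (ii) checking that the Poincar\'e constant produced by our arguments is exactly of the form needed to invoke the equivalence proved in~\cite{MR4332462}, which is indeed the case since that equivalence only uses the inequality qualitatively.
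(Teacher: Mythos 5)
Your proposal is correct and follows essentially the same route as the paper: the result is obtained by combining Corollary~\ref{cor:imp-poincare} (case (1)) and Theorem~\ref{thm:sp-big-pp} with $\alpha=0$ (case (2)) with the equivalence from~\cite{MR4332462}, the unweighted Poincar\'e inequality following from the weighted one since $d(x)\leq \diam(\Omega)$. Nothing further is needed.
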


      \begin{remark}
      Because the matrix $Q$ is allowed to be degenerate, the definition of a weak solution to~\eqref{nprob} is somewhat technical, though it reduces to the classical definition when $Q$ is the identity matrix (i.e., the operator is the $\pp$-Laplacian).  Also, this definition allows for domains $\Omega$ whose boundaries are rough and on which the the normal is not well-defined.  We refer the reader to~\cite{MR4332462} for more information.
      \end{remark}
      
\bibliographystyle{plain}
\bibliography{References}

\end{document}